\title{An effective model for boundary vortices in thin-film micromagnetics 
}
\author{{\Large Radu Ignat}\footnote{Institut de Math\'ematiques de Toulouse \& Institut Universitaire de France, UMR 5219, Universit\'e de Toulouse, CNRS, UPS
IMT, F-31062 Toulouse Cedex 9, France. Email: Radu.Ignat@math.univ-toulouse.fr} \and {\Large Matthias Kurzke}\footnote{School of Mathematical Sciences,
University of Nottingham, University Park, Nottingham, NG7 2RD, UK. Email: matthias.kurzke@nottingham.ac.uk}}
\newtheorem{lem}  {Lemma}
\newtheorem{pro}[lem]    {Proposition}
\newtheorem{thm}[lem]   {Theorem}
\newtheorem{cor}[lem]    {Corollary}
\newtheorem{df}[lem]     {Definition}
\newtheorem{rem}[lem]{Remark}
\numberwithin{equation}{section}
\def\XXint#1#2#3{{\setbox0=\hbox{$#1{#2#3}{\int}$}
     \vcenter{\hbox{$#2#3$}}\kern-.5\wd0}}
\newcommand{\FF}{{\mathcal F}}
\newcommand {\den} {{e}_\eta}
\newcommand {\ka}{\varkappa}
\newcommand{\nltL}[2]{\|#1\|_{L^2({#2})}}
\newcommand{\jacbd}{\mathcal{J}_{bd}}
\newcommand{\jaco}{\mathcal{J}}
\newcommand{\Ss}{\mathbb{S}}
\newcommand{\RR}{\mathbb{R}}
\newcommand{\R}{\mathbb{R}}
\newcommand{\CC}{\mathbb{C}}
\newcommand{\eps}{\varepsilon}
\newcommand{\h}{{\mathcal{H}}}
\newcommand{\proof}[1]{\par\medskip\noindent{\bf Proof#1.}}
\newcommand{\qed}{\hfill$\square$}
\newcommand{\be}{\begin{equation}}
\newcommand{\ee}{\end{equation}}
\newcommand{\nd}{\noindent}
\newcommand{\NN}{\mathbb{N}}
\newcommand{\ZZ}{\mathbb{Z}}
\newcommand{\Om}{\Omega}
\newcommand{\dOm}{\partial\Omega}
\newcommand{\dist}{\mathop{\rm dist \,}}
\newcommand{\diam}{\mathop{\rm diam \,}}
\newcommand{\jac}{\mathop{\rm jac \,}}
\newcommand{\f}{\varphi}
\newcommand{\rh}{\rho_h}
\renewcommand{\O}{\Omega}
\newcommand{\degr}{\operatorname{deg}}
\newcommand{\bm}{\bar {\bf m}_h}
\newcommand{\mk}{\bar m_{h,k}}
\newcommand{\nah}{\nabla\cdot \bar m_h}
\newcommand{\bmt}{\bar m_{h,3}}
\newcommand{\bmp}{\bar m_{h}}
\newcommand{\bmn}{(\bar m_h\cdot \nu)}
\newcommand{\bu}{\bar U_h}
\newcommand{\eeeh}{\bar E_h}
\newcommand{\eee}{E_{\varepsilon,\eta}}
\newcommand{\mcS}{\mathcal{S}}
\newcommand{\bfm}{\mathbf{m}}
\newcommand{\bfM}{\mathbf{M}}
\newcommand{\bfa}{\pmb{\nabla}}
\newcommand{\bfd}{\pmb{\Delta}}
\newcommand{\bfx}{\mathbf{x}}
\newcommand{\bfy}{\mathbf{y}}
\newcommand{\bfxi}{\pmb{\xi}}
\newcommand{\bfom}{\pmb{\omega}}
\newcommand{\Bfom}{\pmb{\Omega}}
\newcommand{\bfnu}{\pmb{\nu}}
\newcommand{\ci}{\mathrm{i}}
\newcommand{\mmregime}[7]{
  \pgfmathsetmacro{\first}{(#2)*10 + #3 - .9} 
    \pgfmathsetmacro{\last}{(#4)*10 + #5 - 1.1} 
  \pgfmathsetmacro{\middle}{(\first+\last)/2} 
  \fill[#7] (\first,#6-0.95) rectangle (\last,#6-0.05) (\middle,#6-.5) node[white, font=\sf\small]{#1};
}
\begin{document}

\maketitle

\listoftodos

\begin{abstract}
Ferromagnetic materials are governed by a variational principle
which is nonlocal, nonconvex and multiscale. The main object is given by a unit-length three-dimensional vector field, the magnetization, that corresponds to the stable states of the micromagnetic energy. Our aim is to analyze a thin film regime that captures the asymptotic behavior of boundary vortices generated by the magnetization  
and their interaction energy. This study is based on the notion of ``global Jacobian" 
detecting the topological defects that \emph{a priori} could be located in the interior and at the boundary of the film.
A major difficulty consists in estimating the nonlocal part of the micromagnetic energy 
 in order to isolate the exact terms corresponding to the topological defects. We prove the concentration of the energy around boundary vortices via a $\Gamma$-convergence expansion at the second order. The second order term  is  the renormalized energy that represents the interaction between the boundary vortices and governs their optimal position. We compute the  expression of the renormalized energy 
 for which we prove the existence of minimizers having two boundary vortices of multiplicity $1$.
 Compactness results are also shown for the magnetization and the corresponding global Jacobian. 

{\it AMS classification: } Primary: 82D40, Secondary: 35B25, 49J45

{\it Keywords: } $\Gamma$-convergence, boundary vortex, renormalized energy, Jacobian, compactness, canonical harmonic map, micromagnetics.
\end{abstract}

\tableofcontents



\section{Introduction} 

The aim of the paper is to develop a mathematical analysis of thin ferromagnetic films in a \emph{boundary vortex} regime. This regime is characterized
by  the concentration of the  micromagnetic energy around topological point singularities located at the boundary of the film, the so-called boundary vortices. Our mathematical approach consists in  determining an  asymptotic expansion by $\Gamma$-convergence 
that is precise enough to  capture the interaction energy between the boundary vortices that governs their location at the boundary of the sample.

\medskip

We start by a brief introduction to micromagnetics which is a nonconvex, nonlocal and multiscale variational principle. For that, we consider 
a ferromagnetic sample of cylindrical shape
 $$\bfom = \omega \times (0,t)\subset \R^3$$ where $\omega \subset \RR^2$ is the transversal section with  diameter $\ell>0$ 
   and $t>0$ is the thickness of the cylinder $\bfom$. We assume that $\omega \subset \RR^2$ is a simply-connected domain of class $C^{1,1}$.
   The behavior of the magnetic moments inside the sample is described by the magnetization $\bfm$ which is a three-dimensional ($3D$) unit-length vector field $$\bfm=(m, m_3):\bfom \to \Ss^2, \quad m=(m_1,m_2).$$ Here 
{\bf bold symbols} always denote a $3D$ quantity, {\it italic symbols} denote $1D$ or $2D$ quantities and $\Ss^2$ is the unit sphere in $\R^3$. The 
{magnetization represents a stable state} of 
the micromagnetic energy, considered here in the absence of anisotropy and applied magnetic field:
\begin{align*}  %
 E^{3D}(\bfm) = A^2 \ \int_{\bfom} |\bfa \bfm|^2 \ d\bfx 
  + \int_{\RR^3} |\bfa U|^2 \ d\bfx,
 \end{align*}
where $\bfx=(x, x_3)\in \R^3$ stands for the $3D$ space variable with the differential
operator
$\bfa=(\nabla, \partial_{x_3})$, while $x=(x_1, x_2)\in \R^2$ and $\nabla=(\partial_{x_1}, \partial_{x_2})$. The first term in $E^{3D}$ is called 
\emph{exchange energy} and penalizes the variations of $\bfm$ according to the exchange length of the material $A>0$ that is 
typically on 
 the order of nanometers. 

The second term is the nonlocal 
\emph{magnetostatic energy} that is carried by
the $H^1$ stray-field potential
 $U:\RR^3\to \RR$ generated by $\bfm$ via the static Maxwell's equation:
 $$
\bfd U = \bfa\cdot (\bfm \mathds{1}_{\bfom}) \quad \textrm{ in the sense of distributions in } \RR^3,
$$ where $\mathds{1}_{\bfom}$ is the characteristic function of $\bfom$, 
\be
\label{Helmholtz}
\textrm{i.e., }\quad \int_{\RR^3} \bfa U \cdot \bfa \zeta \ d\bfx = \int_{\bfom} \bfm \cdot \bfa \zeta \ d\bfx , \quad \textrm{ for every }
    \zeta\in C^\infty_c(\RR^3) \ee
    (see Appendix \ref{appendixA}). In other words, the stray field $\bfa U$ is the $L^2$ Helmholtz projection onto gradient fields of the magnetization $\bfm$ (extended by zero outside the sample $\bfom$).

The combination of nonconvexity (through the saturation condition $|\bfm|=1$ in $\bfom$) and nonlocality
(through the coupling with Maxwell's equation) leads to rich magnetic
pattern formation. Depending on the length scales of the system (e.g. the exchange length $A$, the thickness $t$ and width $\ell$ of the sample $\bfom$), experiments have shown different types of singular patterns, such as domain walls and vortices. To predict and 
describe  these  microscopic structures occurring across a wide range of spatial scales poses many
 challenging mathematical problems that have led to a substantial amount of 
recent research, see e.g. DeSimone et al. \cite{DeSimoneKohnMuller:2005a} and Ignat \cite{Ignat_HDR}. 

\subsection{A thin film regime}
 We are interested in analyzing the asymptotic behavior of the magnetization and of the micromagnetic energy $E^{3D}$ in a special thin film regime
 where boundary vortices appear. For that, it is convenient to use instead of the three length scales $\ell$, $t$ and $A$ of the model, 
 the following
  two dimensionless parameters\footnote{ $h$ stands for the aspect ratio of the sample $\bfom$, while $\eta$, the reduced exchange length, corresponds to the core size of interior vortices. Both parameters $h$ and $\eta$ will be small in our regime.}:
  \begin{align*}
    h:= \frac{t}{\ell}>0   && \text{and} && \eta := \frac A \ell>0.  
  \end{align*}
  
 \medskip 
 
 {\nd \bf Rescaling}.   
 We consider $\eta=\eta(h)$ as a function of $h$ and nondimensionalize in length with respect to $\ell$, i.e. $\hat \bfx=\frac{\bfx}{\ell}$, 
 $$ \hat \bfx\in\Bfom_h=\Omega\times (0,h)\subset \R^3, \quad \Omega=\frac{\omega}{\ell}\subset \R^2,$$ 
(so, the transversal section $\Omega$ has diameter one), $\bfm_h(\hat \bfx)=\bfm(\bfx)$, $U_h(\hat \bfx)= \frac 1 \ell U(\bfx)$. Then we 
rescale the energy at the energetic level of boundary vortices  to $\hat E_h(\bfm_h)=\frac{1}{A^2 t |\log \eps|} E^{3D}(\bfm)$, where  
 $$
 \eps=\frac{\eta^2}{h|\log h|}$$
 is a function $ \eps(h)$ of $h$ standing for the core size of a boundary vortex. We will always assume $h\ll 1$ and $\eps\ll 1$.
 {\bf Skipping the hat $\ \hat{} \ $ from now on}, we will use the following quantities:
 \be \label{eee}
    E_h(\bfm_h) = \frac{1}{h|\log \eps|} \ \int_{\Bfom_h} |\bfa \bfm_h|^2 \ d\bfx + \frac{1}{\eta^2 h |\log \eps|}\int_{\RR^3} | \bfa
   U_h|^2 \ d\bfx,
   \ee
where 
\be
\label{strayfield}
 \bfm_h:\Bfom_h\to \Ss^2, \quad \bfd U_h=\bfa \cdot (\bfm_h \mathds{1}_{\Bfom_h}) \quad \textrm{ in  } \quad \RR^3.
\ee


\nd {\bf Notation}. We use the following notation: $a\ll b$ or $a=o(b)$ if $\frac a b\to 0$, as well as $a\lesssim b$ or $a=O(b)$ 
 if there exists a universal $C>0$ such that $a\leq  Cb$ and finally, $a\sim b$ if $a\lesssim b$ and  $b\lesssim a$.
We adopt the constant convention, which means that constants called $C$ can change their value from one line to the next. 
The notation ``a sequence / family $h\to 0$'' or similar will be used both to refer to a sequence $h_k\to 0$ or to a continuous parameter $h\to 0$.

\medskip

\nd {\bf Regime}. We will focus on the following regime of thin films (i.e., the aspect ratio $h\ll 1$ is very small) where the {rescaled} energy $E_h$ in \eqref{eee} is expected to concentrate around boundary vortices: 
\be
\label{regime}
\eta, h \ll 1 \quad \textrm{and} \quad \frac{1}{|\log h|}\ll \eps \ll 1.\ee
Note that \eqref{regime} is equivalent with $h\ll {\eta^2}\ll h|\log h|\ll 1$, so that $|\log h|\sim |\log \eta|$. As $a\ll b \ll 1$ implies $a|\log a |\ll b |\log b | \ll 1$, we deduce for $a=\frac{1}{|\log h|}$ and $b=\eps$:
\begin{equation}\label{regimelogs}
\frac{\log|\log h|}{|\log h|} \ll \eps|\log\eps|.
\end{equation}
Also note that  in terms of $\eps$ and $\eta$, \eqref{regime} can be written as
$$
\eta \ll 1 \quad \textrm{and} \quad \frac{1}{|\log \eta|}\ll \eps \ll 1.$$
In particular, one has $|\log \eps|{\leq} \log |\log \eta|$ and 
as $|\log h|\sim |\log \eta|$, it implies 
{$|\log \eps|\lesssim \log|\log h|$}. 
Some of our results are only valid in the following regime, which is narrower than \eqref{regimelogs}:
\be
\label{regime2}
\frac{\log|\log h|}{|\log h|} \ll  { \eps}
\ee
Obviously, { if $\eta(h), \eps(h), h\ll 1$, then \eqref{regime2} implies \eqref{regime}.}
Possible choices for $\eta(h)$ for which $\eps(h)$ satisfies \eqref{regime2} are $\eta^2=Ch|\log h|^\beta$ for some $0<\beta<1$ and $C>0$. 
The choice $\eta^2=Ch\log|\log h|$ with $C>0$ is an example for which \eqref{regime} holds true but \eqref{regime2} does not.

Our regime \eqref{regime} fills the gap in the analysis of thin film regimes in micromagnetics between the 
regimes $\eta^2=O(h)$ studied by Moser \cite{Moser:2004a}
and $\eta^2=O(h|\log h|)$ analysed by Kohn-Slastikov \cite{KohnSlastiko:2005a}.
We refer to Subsection \ref{physi} for a discussion  
of the thin film regimes in micromagnetics.

\medskip 

\nd {\bf A reduced  $2D$ model}.
A key point in our analysis concerns the behavior of the micromagnetic energy $E_h$ (given in \eqref{eee}) in the asymptotic regime \eqref{regime}. As the aspect ratio $h$ tends to $0$, the appropriate quantity to study is the vertical average magnetization $\bfm_h$ (given in \eqref{strayfield}) defined in the $2D$ section $\Omega$:
\be\label{eq:averageofm}
\bm(x)=\frac{1}{h} \int_0^h \bfm_h(x, x_3)\, dx_3, \quad x\in \Om\subset \R^2,
\ee
to which the stray field potential $\bu:\R^3\to \R$ is associated via 
\be
\label{stray_mean}
\bfd \bu=\bfa \cdot (\bm {\mathds 1}_{\Bfom_h}) \quad \textrm{ in }\quad \RR^3. 
\ee
Note that the unit-length constraint is convexified by averaging, i.e. 
$|\bfm_h|=1$ yields $|\bm|\le 1$, so
$$\bm=(\bmp, \bmt):\Om\to \bar B^3$$ 
where $\bar B^3$ is the closed unit ball in $\R^3$. 

We will prove that in the regime \eqref{regime},  the $3D$ micromagnetic model reduces to a $2D$ model for the average magnetization $\bm$. The major difficulty consists in determining the scaling of the nonlocal part of the rescaled energy $E_h$ in the regime \eqref{regime}. More precisely, the stray field energy penalizes the distance of the in-plane average $\bmp$ to the unit circle $\Ss^1$ inside $\Omega$ as well as the normal component $\bmn$ at the boundary $\partial \Om$ where $$\nu=(\nu_1, \nu_2)$$ stands for the outer unit normal vector at  $\partial \Om$. 
Thus, we introduce the following reduced $2D$ energy functional associated to the average magnetization $\bm$ with $|\bm|\le 1$ defined in the $2D$ simply-connected $C^{1,1}$ domain $\Omega$:
\begin{equation}\label{eq:defofebar}
\eeeh(\bm)= \frac{1}{|\log \eps|} \bigg( \ \int_{\Omega} |\nabla \bm|^2 \ dx+ \frac{1}{\eta^2}\int_{\Om} (1-|\bmp|^2) \ dx+\frac{1}{2\pi \eps} \int_{\partial \Om} \bmn^2\, d\h^1\bigg).
\end{equation}
We can extend the definition of $\eeeh$ to all of $H^1(\Om;\R^3)$ by setting $\eeeh(\mathbf{m})=\infty$ if $|\mathbf{m}|>1$ on a set of positive measure.

\begin{thm}\label{thm:lowerb} Let $\Bfom_h=\Omega\times (0,h)$ with $\Om\subset \R^2$ be a simply connected 
$C^{1,1}$ domain. In the regime \eqref{regime}, we consider a family of magnetizations $\{\bfm_h:\Bfom_h\to \Ss^2\}_{h\to 0}$ 
with associated  stray field potentials $\{U_h:\R^3\to \R\}_{h\to 0}$ given by \eqref{strayfield} and we assume  $$ \limsup_{h\to 0} E_h(\bfm_h)<\infty.$$ 
Then
$$E_h(\bfm_h)\ge \eeeh(\bm)-o(1) \quad \textrm{as} \quad h\to 0.$$
Moreover, in the more restrictive regime 
\eqref{regime2},
we have the following improved estimate:
\be\label{eq:improvedest}
E_h(\bfm_h)\ge \eeeh(\bm)-o(\frac{1}{|\log\eps|})\quad \textrm{as} \quad h\to 0.\ee
If $\bfm_h$ are independent of $x_3$ (i.e., $\bfm_h=\bm$), then 
in the regime \eqref{regime} there holds 
$E_h(\bfm_h)= \eeeh(\bm)-o(1)$, while in the regime \eqref{regime2} we have $E_h(\bfm_h)= \eeeh(\bm)-o(\frac{1}{|\log\eps|})$
as $h\to 0$.
\end{thm}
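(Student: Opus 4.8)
The plan is to estimate the three nonnegative contributions to $E_h(\bfm_h)$ separately, using $|\bfa\bfm_h|^2=|\nabla\bfm_h|^2+|\partial_{x_3}\bfm_h|^2$. The horizontal exchange is handled by Jensen's inequality applied to $x_3\mapsto\nabla\bfm_h(\cdot,x_3)$, which gives $\frac1h\int_{\Bfom_h}|\nabla\bfm_h|^2\,d\bfx\ge\int_\Om|\nabla\bm|^2\,dx$, with equality exactly when $\bfm_h\equiv\bm$; this already produces the leading term of $\eeeh(\bm)$ with no loss. The vertical exchange is handled by the one-dimensional Poincar\'e inequality on $(0,h)$ for the zero-mean function $\bfm_h-\bm$: since $1-|\bm|^2=\frac1h\int_0^h|\bfm_h-\bm|^2\,dx_3$, one gets $\frac1h\int_{\Bfom_h}|\partial_{x_3}\bfm_h|^2\,d\bfx\ge\frac{\pi^2}{h^2}\int_\Om(1-|\bm|^2)\,dx$, and because $h^2\ll\eta^2$ throughout the regime \eqref{regime}, a vanishing fraction of the vertical exchange already dominates $\frac1{\eta^2}\int_\Om(1-|\bm|^2)\,dx$; the remainder is discarded or kept in reserve. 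Writing $1-|\bmp|^2=(1-|\bm|^2)+\bmt^2$, it remains to extract from the magnetostatic energy the two terms $\frac1{\eta^2}\int_\Om\bmt^2\,dx$ and $\frac1{2\pi\eps}\int_{\partial\Om}\bmn^2\,d\h^1$, and this nonlocal estimate is the heart of the proof.

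For the magnetostatic lower bound I would use the dual formula $\int_{\RR^3}|\bfa U_h|^2\,d\bfx=\sup_{\psi\in\dot H^1(\RR^3)}\bigl(2\int_{\Bfom_h}\bfm_h\cdot\bfa\psi\,d\bfx-\int_{\RR^3}|\bfa\psi|^2\,d\bfx\bigr)$ and test it with $\psi=\psi_1+\psi_2$. The field $\psi_1(x,x_3)=\bmt(x)\,\phi(x_3)$ is built from a Lipschitz profile $\phi$ with $\phi'\equiv1$ on $(0,h)$, $\phi=0$ for $x_3\le0$, and $\phi$ decaying back to $0$ over an interval of length $L$ above the film; since $\psi_1$ varies slowly in $x_3$, the integral $\int_{\Bfom_h}\bfm_h\cdot\bfa\psi_1\,d\bfx$ automatically sees the vertical average and equals $h\int_\Om\bmt^2\,dx$ plus an $O(h^2)\|\nabla\bmt\|_{L^1(\Om)}$ error, while $\int_{\RR^3}|\bfa\psi_1|^2\,d\bfx=h\int_\Om\bmt^2\,dx+O\bigl(\frac{h^2}{L}\int_\Om\bmt^2+Lh^2\int_\Om|\nabla\bmt|^2\bigr)$, so that optimizing in $L$ and using $|\bmt|\le1$ together with the exchange bound $\int_\Om|\nabla\bm|^2\,dx\lesssim|\log\eps|$ shows that $\psi_1$ contributes $h\int_\Om\bmt^2\,dx-O(h^2|\log\eps|^{1/2})$. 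The field $\psi_2$ is supported in a tube of radius $\sim h$ around $\partial\Om\times(0,h)$: in boundary-adapted coordinates $(s,r,x_3)$ (arclength $s$, signed distance $r$) it is, for each fixed $s$, a rescaled copy of the solution of the two-dimensional electrostatic problem for the charge $\bm\cdot\nu$ spread on the segment $\{r=0,\ 0<x_3<h\}$, whose self-energy carries the factor $|\log h|$; it yields $2\int_{\Bfom_h}\bfm_h\cdot\bfa\psi_2\,d\bfx-\int_{\RR^3}|\bfa\psi_2|^2\,d\bfx=\frac{h^2|\log h|}{2\pi}\int_{\partial\Om}\bmn^2\,d\h^1+(\text{lower order})$. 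Dividing by $\eta^2h|\log\eps|$ and using $\frac1\eps=\frac{h|\log h|}{\eta^2}$ converts these two leading contributions into exactly the two missing terms of $\eeeh(\bm)$. Before making the error analysis sharp, one first runs a cruder version of this argument (nonoptimal constants) to deduce from $\limsup_{h\to0}E_h(\bfm_h)<\infty$ the a priori bounds $\int_\Om(1-|\bmp|^2)\,dx\lesssim\eta^2|\log\eps|$ and $\int_{\partial\Om}\bmn^2\,d\h^1\lesssim\eps|\log\eps|$, which are then used to estimate all remainders.

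The main obstacle is precisely to show that every remainder is negligible at the required precision after division by $\eta^2h|\log\eps|$: $o(1)$ under \eqref{regime}, and $o(\frac1{|\log\eps|})$ under the narrower \eqref{regime2}. For the $\psi_1$-remainder this reduces, via $\frac{h}{\eta^2}=\frac1{\eps|\log h|}$, to $O\bigl(\frac1{\eps|\log h|\,|\log\eps|^{1/2}}\bigr)$, which is $o(1)$ because $\eps|\log h|\to\infty$ by \eqref{regime} and is $o(\frac1{|\log\eps|})$ because $\eps|\log h|\gg\log|\log h|\gtrsim|\log\eps|^{1/2}$ by \eqref{regime2}; the same bookkeeping — using the a priori bounds, $|\bm|\le1$, $\int_\Om|\nabla\bm|^2\lesssim|\log\eps|$, and the inequalities \eqref{regimelogs}--\eqref{regime2} — controls the $L$-truncation, the essentially orthogonal cross term $\int_{\RR^3}\bfa\psi_1\cdot\bfa\psi_2\,d\bfx$ (the two fields live at different length scales), the curvature corrections of $\partial\Om$ in the boundary coordinates, the volume-charge contribution $\int_{\Bfom_h}(\nabla\cdot\bfm_h)\,\psi_2$, and the discrepancy between $\bfm_{h,3}$ on the two faces of the film and $\bmt$ (absorbed by the vertical exchange held in reserve). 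The delicate point is that the defect cores carrying the topology have width $\sim\eps$ with $h\ll\eps$ in the regime, which is exactly what prevents the boundary layer from spoiling the leading constants $1$ and $\frac1{2\pi}$ in the two extracted terms — a crude estimate would produce the wrong, non-sharp constant.

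Finally, when $\bfm_h=\bm$ is independent of $x_3$ there is no vertical exchange, the face term equals $\bmt$ identically, and the horizontal-exchange inequality is the equality $\frac1h\int_{\Bfom_h}|\nabla\bm|^2\,d\bfx=\int_\Om|\nabla\bm|^2\,dx$. Using the very same competitors $\psi_1,\psi_2$ in the primal (minimization) characterization of $\int_{\RR^3}|\bfa\bu|^2\,d\bfx$ provides the matching upper bound $\int_{\RR^3}|\bfa\bu|^2\,d\bfx\le h\int_\Om\bmt^2\,dx+\frac{h^2|\log h|}{2\pi}\int_{\partial\Om}\bmn^2\,d\h^1+(\text{remainder})$ with the remainder governed by the same estimates as above, so that $E_h(\bfm_h)=\eeeh(\bm)-o(1)$ in the regime \eqref{regime} and $E_h(\bfm_h)=\eeeh(\bm)-o(\frac1{|\log\eps|})$ in the regime \eqref{regime2}.
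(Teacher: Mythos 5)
Your skeleton (Jensen for the in-plane exchange, the Poincar\'e-in-$x_3$ trick that converts $\frac1{\eta^2}\int_\Om(1-|\bmp|^2)$ into $\frac1{\eta^2}\int_\Om \bmt^2$ at the price of a vanishing fraction of the vertical exchange, a priori bounds from a crude first pass) is sound and broadly parallel to the paper's, and your $\psi_1$ computation is essentially correct. The genuine gap is in $\psi_2$, i.e.\ exactly where the sharp boundary term must be extracted. A test potential supported in a tube of radius $\sim h$ around $\dOm\times(0,h)$ cannot produce the factor $|\log h|$: the self-energy of the lateral charge of linear density $h\,\bmn$ is spread over the dyadic scales between $h$ and $O(1)$ (the two-dimensional field decays like $h\bmn/(2\pi\rho)$), so a field confined to distance $O(h)$ from the boundary captures only $O(h^2)\int_{\dOm}\bmn^2\,d\h^1$, a $1/|\log h|$-fraction of the required $\frac{h^2|\log h|}{2\pi}\int_{\dOm}\bmn^2\,d\h^1$; after division by $\eta^2h|\log\eps|$ the entire boundary term of $\eeeh$ is lost, not just its constant, so the claimed lower bound fails as written. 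If you repair this by letting the tube have a fixed macroscopic radius (any choice $h^\alpha$ loses the factor $\alpha$ in the constant, which is fatal since the theorem asserts the constant $\frac1{2\pi\eps}$), then the strict product-in-arclength ansatz ``for each fixed $s$ a rescaled copy of the $2D$ problem'' generates the tangential term $\int|\partial_s\psi_2|^2\sim h^2\int_{\dOm}|\partial_\tau \bmn|^2\,d\h^1$, which is \emph{not} controlled by the available bounds: the energy only gives $H^{1/2}(\dOm)$ control of the trace of $\bm$. This is precisely the difficulty the paper resolves by computing $\int_{\R^3}|\bfa\bu|^2$ exactly through the Newtonian-potential representation and estimating the lateral interaction via the $\dot H^{1/2}(\dOm)$ seminorm (the term ${\cal G}_2$) together with the quantitative kernel estimate of Lemma~\ref{lem:estimG1}; your sketch asserts, but does not supply, this central estimate.

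The second gap concerns the equality statements for $x_3$-independent $\bfm_h$. Test potentials only ever give \emph{lower} bounds: $\int_{\R^3}|\bfa\bu|^2=\sup_\psi\big(2\int_{\Bfom_h}\bm\cdot\bfa\psi-\int_{\R^3}|\bfa\psi|^2\big)$ is a supremum, and the corresponding minimization characterization of the magnetostatic energy is over vector fields $\bfa$-divergence-matched to $\bfa\cdot(\bm\,\mathds{1}_{\Bfom_h})$ (Helmholtz projection), for which $\bfa(\psi_1+\psi_2)$ is not admissible. Hence ``using the very same competitors in the primal characterization'' does not yield $\int_{\R^3}|\bfa\bu|^2\le h\int_\Om\bmt^2\,dx+\frac{h^2|\log h|}{2\pi}\int_{\dOm}\bmn^2\,d\h^1+o(\cdot)$; you would need a genuinely different construction (a divergence-matched field), or, as the paper does, a two-sided estimate obtained by expanding $\int_{\R^3}|\bfa\bu|^2$ with the Green representation and estimating all kernels quantitatively (Lemma~\ref{lem:claim}, estimate \eqref{claim}), which is what makes the asymptotic \emph{equalities} in the $x_3$-independent case, and hence the matching upper bounds used later, available. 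Until the boundary-layer construction is replaced by an argument that controls the tangential variation with only $H^{1/2}$ trace information and the upper-bound direction is supplied, the proof is incomplete at its core nonlocal step.
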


 Let us highlight the role of the above estimates: while the full micromagnetic energy $E_h$ is nonlocal (due to the stray field), the reduced energy $\eeeh$ becomes local in terms of the average magnetization, so easier to handle.\footnote{Physically, the  regime \eqref{regime} corresponds to fairly small magnetic samples where the nonlocality is lost, e.g., N\'eel walls cannot nucleate since their core is too wide to be contained in such small samples.
 However, the samples are still large compared to the core size of a boundary vortex. See Section \ref{physi} for more details.} Moreover, the improved energy estimate \eqref{eq:improvedest} 
is essential to carry out the  asymptotic $\Gamma$-development of the $3D$ energy $E_h$ at the
 second order that allows us to determine the interaction between boundary vortices.

In \cite{IK_jac}, we have studied the energy functional for $2D$ maps $u\in H^1(\Omega;\R^2)$:
\[
E_{\eps,\eta}(u) = \int_\Omega |\nabla u|^2 dx +  \frac1{\eta^2} \int_{\Omega} (1-|u|^2)^2dx + \frac1{2\pi \eps} \int_{\partial\Omega} (u\cdot\nu)^2 d\h^1.
\]
Comparing this to $\eeeh$, we have for $u=\bmp$ that $(1-|u|^2)\ge (1-|u|^2)^2$. Since $|\nabla (\bm\cdot e_3)|^2\ge 0$, we deduce 
\begin{equation}\label{eundikjace}
\eeeh(\bm) \ge \frac1{|\log\eps|} E_{\eps,\eta}(\bmp),
\end{equation}
and so the lower bounds obtained for $E_{\eps,\eta}$ in \cite{IK_jac} will provide lower bounds for $\eeeh$ and in turn for $E_h$ (by Theorem \ref{thm:lowerb}). For maps $\bm=(\bmp,0)$ with values into $\Ss^1\times \{0\}$,  
\eqref{eundikjace} is an equality and will allow us to obtain matching upper bounds. See Section~\ref{sec:sec3} for details.

\subsection{Global Jacobian }
We want to explain now the topological challenges carried by the reduced $2D$ energy $\eeeh$ defined in \eqref{eq:defofebar}. It is similar to the standard Ginzburg-Landau functional studied in the seminal book of Bethuel-Brezis-Helein \cite{BethuelBrezisHelein:1994a}, i.e.,
\be
\label{ginlan}
u\in H^1(\Omega\subset \R^2; \R^2)\mapsto \int_{\Omega} \den(u)\, dx \quad \textrm{with } \, \den(u)=|\nabla u|^2+\frac{1}{\eta^2} (1-|u|^2)^2,\quad \eta>0.\ee
However, the reduced energy $\eeeh$ leads to richer singular pattern formation
due to the additional penalty term at the boundary. Indeed, we expect that stable states of $\eeeh$ generate both interior and boundary vortices (see Moser \cite{Moser:2003a}).\footnote{In \cite{Moser:2003a}, Moser studies \eqref{eee} neglecting the out-of-plane component of the magnetization and in the special regime where $\eps=\eta^\alpha$, with $\alpha\in (0, 1]$. The author proves that global minimizers nucleate  two boundary vortices if $\alpha<1$,  while for $\alpha=1$, either two boundary vortices or an interior 
vortex are possible. For a similar problem, a more refined analysis was performed by Alama et al. \cite{AlamaBronsardGalvao2015}. }
 Therefore, we need to define a notion of {\it global Jacobian} capable of detecting topological singularities in the interior as well as at the boundary of the sample. We refer to our previous paper \cite{IK_jac} for more details. In our setting, the $2D$ map $u$ in \eqref{ginlan} plays the role of the in-plane components of the average magnetization $\bm$, i.e., $u=(\bm\cdot {\bf e}_1, \bm\cdot {\bf e}_2)$ where $({\bf e}_1, {\bf e}_2, {\bf e}_3)$ is the Cartesian basis in $\R^3$.  

\medskip

\nd {\bf Global Jacobian}. 
For a $2D$ map $u\in H^1(\Omega;\R^2)$  defined on the $C^{1,1}$ 
domain $\Omega\subset \R^2$, we call {\it global Jacobian} of $u$ the following linear operator $\jaco(u):W^{1, \infty}(\Omega)\to \RR$ acting on Lipschitz test functions:\footnote{\label{fn3}
Note that $u \times \nabla u\in L^1(\Om;\R^2)$ for $u\in H^1(\Om;\R^2)$. More generally, the global Jacobian $\jaco(u)$ extends naturally to a map $u\in L^p(\Omega;\R^2)$ with $\nabla u \in L^q(\Omega;\R^{2\times 2})$ where $\frac1p+\frac1q=1$, $p, q\in [1, \infty]$; { in particular, this is the case for $u\in W^{1,1}(\Omega; \Ss^1)$.}} 
$$\left<\jaco(u), \zeta\right>:=-\int_{\Omega} u \times \nabla u\cdot \nabla^\perp \zeta\, dx, \quad \textrm{for every Lipschitz function }\, \zeta: \Omega\to \R,$$
where $\nabla^\perp=(-\partial_{x_2}, \partial_{x_1})$ and $a\times b=a_1b_2-a_2b_1$ for any two vectors $a=(a_1, a_2)\in \R^2$ and $b=(b_1, b_2)\in \R^2$. In particular, $$\left<\jaco(u), 1\right>=0.$$ 
On the one hand, the global Jacobian carries the topological information in the interior of $\Omega$, where
 it coincides (up to a multiplicative constant) with the standard Jacobian of $u$ detecting the interior vortices that we call {\it interior Jacobian} in the sequel, i.e., 
$$\jac(u)=\det (\nabla u)=\partial_{x_1}u\times \partial_{x_2}u\in L^1(\Omega).$$
Indeed, integration by parts for Lipschitz test functions $\zeta\in W^{1,\infty}_0(\Omega)$ vanishing at the boundary $\partial \Omega$ yields 
\[
\left<\jaco(u), \zeta\right> = 2 \int_{\Omega} 
\jac(u) \zeta \, dx \quad  \textrm{ if } \, \zeta=0 \,  \textrm{ on } \, \partial \Omega
\]
(see e.g. the proof of \cite[Proposition 2.1]{IK_jac}).
On the other hand, the global Jacobian also carries the topological information at the boundary $\partial \Omega$ and will enable us to detect the boundary vortices; 
more precisely, we define the {\it boundary Jacobian} of $u$ to be the linear operator
 $\jacbd(u):W^{1, \infty}(\Omega)\to \RR$ given by
 \be
 \label{def_jac_bd}
 \jacbd(u):=\jaco(u)-2\jac(u).\ee
 In fact, the operator $\jacbd(u)$ acts only on the boundary of $\partial \Om$: For  maps $u$ 
 that are smooth up to the boundary, integration by parts yields\footnote{A Lipschitz function $\zeta:\Omega\to \R$ has a unique Lipschitz extension to $\bar \Omega=\Omega\cup \partial \Omega$ that we consider (tacitly) in the following. } 
 $$\left< \jacbd(u), \zeta\right>=-\int_{\partial \Om} (u\times \partial_{\tau} u ) \zeta\,d\h^1 \quad \textrm{for every Lipschitz function $\zeta:\Omega\to \R$}$$
 (for the general case $u\in  H^1(\Omega;\R^2)$, see \cite[Proposition 2.2]{IK_jac}). 
 Here and throughout the paper, $$\tau=\nu^\perp=(-\nu_2, \nu_1)$$ is the unit tangent vector at $\partial \Omega$ 
 such that $(\nu, \tau)$ form an oriented frame,
 and we write $\partial_\tau$ for the derivative along the boundary.
 
Note that for unit-length maps $u:\Omega\to \Ss^1$ { that admit a smooth lifting $\f:\Omega\to \R$, i.e., $u=(\cos \f, \sin \f)$ in $\Omega$, }the interior Jacobian of $u$ vanishes, so that the whole topological information is carried by the
tangential derivative of the lifting $\f$ at the boundary:
 \be
 \label{jac_s1}
\jac(u)=0, \quad \jaco(u)=\jacbd(u)=-\partial_{\tau} \f\, {\h^1}\llcorner \partial \Omega \quad \textrm{and}\quad \left<\jacbd(u), 1\right>=0 \quad \textrm{ if } \, |u|=1 \,  \textrm{ in } \, \Omega.
\ee
Typically, in our model, the limiting global Jacobian in the regime \eqref{regime} is a measure supported on the boundary $\dOm$, of the form $J=-\ka \, {\h^1\llcorner \dOm}+\pi \sum_{j=1}^N d_j \delta_{a_j}$ 
for $N$ distinct boundary vortices $a_j \in \partial\O$ with  $d_j\in \ZZ\setminus\{0\}$ and $\ka$ the curvature on $\dOm$. 
The necessary condition $\left<J,1\right>=0$ yields, via the Gauss-Bonnet formula, the following topological constraint on the multiplicities $d_j$: 
$$\pi \sum_{j=1}^N d_j=\int_{\dOm}\ka\, d\h^1=2\pi, \quad \textrm{i.e.,} \quad \sum_{j=1}^N d_j=2$$ 
(see e.g. \cite{IK_jac}).

\subsection{$\Gamma$-expansion for boundary vortices}
{In Theorem~\ref{thm:lowerb} we have connected the $3D$ micromagnetic energy $E_h$ in \eqref{eee} to a reduced $2D$ energy for the average magnetizations that are defined on the $2D$ transversal section $\Omega$. This reduction result plays a fundamental role in proving our main Theorem \ref{thm:t1} below. First, we show that the global Jacobian of the average magnetizations converges (up to extraction) to a measure supported at the boundary $\partial \Omega$
 provided a certain energy bound  of the magnetizations in the regime \eqref{regime}. This limit measure involves a finite sum of Dirac measures at the boundary vortices $a_j\in \dOm$ having nonzero multiplicities $d_j\in \ZZ\setminus \{0\}$ that satisfy the topological constraint $\sum d_j=2$. Next, we prove a $\Gamma$-convergence result (at the first order) for the $3D$ energy  $E_h$ where the $\Gamma$-limit depends on the number of boundary vortices detected by the global Jacobian in the regime \eqref{regime}. Finally, if we restrict to the narrower regime \eqref{regime2}, we will prove a $\Gamma$-expansion at the second order of the $3D$ energy  $E_h$ that enables us to capture the positions of boundary vortices.
More precisely, the second order term in this $\Gamma$-expansion involves the renormalized energy, similar to that of Bethuel-Brezis-H\'elein \cite{BethuelBrezisHelein:1994a} (see \cite{IK_jac}) that is introduced via $\Ss^1$-valued canonical harmonic maps with prescribed boundary vortices.}

\bigskip

\nd {\bf Canonical harmonic maps and renormalized energy}. 
The canonical harmonic maps we consider in this paper are $\Ss^1$-valued smooth harmonic maps $m_*=e^{\ci \phi_*}$ in $\Om$ (i.e., $\Delta \phi_*=0$ in $\Om$) that are tangent on the boundary $\dOm$ except at $N$ boundary vortices $a_j\in \dOm$ where $m_*$ winds according to the multiplicities $d_j$. 

\begin{df}\label{defi:can_map}
Let $\Omega\subset \R^2$ be a bounded, simply connected, $C^{1,1}$ regular domain and $\nu$ be the unit outer normal field on $\dOm$ with the tangent field $\tau=\nu^\perp$ and the curvature $\ka$. For $N\geq 1$, we consider $N$ distinct points $a_1, \dots, a_N \in \dOm$ with the multiplicities $d_1, \dots, d_N \in \ZZ\setminus \{0\}$ such that $\sum_{j=1}^N d_j=2$ and a $BV$ function $\phi:\partial\Omega\to \R$ such that  $e^{\ci\phi}\cdot \nu=0$ on $\partial\Omega\setminus\{a_1,\dots,a_N\}$ and
 $$\partial_\tau \phi =\ka  -\pi\sum_{j=1}^N d_j \delta_{a_j}\, \, \textrm{ on } \partial \Om.$$ If $\phi_*$ is the harmonic extension to $\Omega$ of $\phi$, then $m_*=e^{\ci \phi_*}$ is a {\bf canonical harmonic map} associated to $\{(a_j, d_j)\}$.
\end{df}

\begin{rem}
\label{rem-uniq}
Note that the function $\phi$ in Definition \ref{defi:can_map} exists on $\dOm$ because of the contraint $\sum_{j=1}^N d_j=2$ that is equivalent via Gauss-Bonnet formula to the zero total mass of the (signed) measure $\partial_\tau \phi$, i.e., $\int_{\dOm} \partial_\tau \phi=0$. Moreover, the function $\phi$ is uniquely determined on $\dOm$ up to an additive constant in $\pi \ZZ$. Thus, for every prescribed boundary vortices $\{a_j\}_{1\leq j\leq N}$ with multiplicities 
$\{d_j\}_{1\leq j\leq N}$, there are {\bf exactly two canonical harmonic map} associated to $\{(a_j, d_j)\}$, i.e., $m_*=\pm e^{\ci \phi_*}$ for the harmonic extension $\phi_*$ to $\Omega$ of $\phi$.
\end{rem}

We prove the following characterization of canonical harmonic maps. Compared to the results in the seminal book of Bethuel-Brezis-H\'elein \cite{BethuelBrezisHelein:1994a}, the novelty here consists in dealing with the constraint that the canonical harmonic maps are tangent to the boundary away from the prescribed boundary vortices.

\begin{thm}\label{pro:canm}
Let $B_1$ be the unit disk, $\{a_j\}_{1\leq j\leq N}$ be $N\geq 1$ distinct points on  $\partial B_1$ with multiplicities 
$d_1, \dots, d_N\in\ZZ\setminus\{0\}$ such that $\sum_{j=1}^N d_j = 2$. Consider a point $b\in \partial B_1\setminus \{a_1, \dots, a_N\}$.
Then any canonical harmonic map with prescribed boundary vortices $\{(a_j, d_j)\}_{1\leq j\leq N}$ on $\partial B_1$ has the form
\be
\label{can_disk}
m_*(z)=\pm {\ci} b \prod_{j=1}^N \left(\frac{z-a_j}{|z-a_j|} \frac{|b-a_j|}{b-a_j} \right)^{d_j}, \quad \textrm{for all } \, \, z\in B_1.
\ee
If $\Omega\subset \R^2$ is a domain  such that\footnote{\label{foot1} By Kellogg's theorem, the existence of such $C^1$ conformal diffeomorphism holds for $C^{1, \alpha}$ simply connected bounded domains $\Omega\subset \R^2$, where $0<\alpha<1$.} there exists 
a $C^1$ conformal diffeomorphism $\Phi:\overline{B_1}\to \overline{\Omega}$ with inverse $\Psi$, then
\be
\label{can_gen}
M_*(w)=m_*(\Psi(w)) \frac{\Phi'(\Psi(w))}{|\Phi'(\Psi(w))|} \textrm{ for every } w\in \Omega,
\ee
is the canonical harmonic map with prescribed boundary vortices $\{(\Phi(a_j), d_j)\}$ on $\dOm$. 
\end{thm}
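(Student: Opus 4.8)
The plan is to first settle the case of the disk $B_1$ and then transfer the result to a general domain $\Omega$ by the conformal change of variables. For the disk, I would argue as follows. Let $m_* = e^{\ci\phi_*}$ be any canonical harmonic map with prescribed boundary vortices $\{(a_j,d_j)\}$, so $\phi_*$ is harmonic in $B_1$ with $\partial_\tau\phi = \ka - \pi\sum_j d_j\delta_{a_j}$ on $\partial B_1$ (here $\ka\equiv 1$) and $e^{\ci\phi}\cdot\nu = 0$ away from the $a_j$. Consider the candidate map $g(z) := \ci b\prod_{j=1}^N\bigl(\tfrac{z-a_j}{|z-a_j|}\tfrac{|b-a_j|}{b-a_j}\bigr)^{d_j}$. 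The key point is that each factor $\tfrac{z-a_j}{|z-a_j|}$ is the boundary trace (on $\partial B_1$) of a \emph{holomorphic} function up to a unit-modulus constant, since for $z\in\partial B_1$ one has $\tfrac{z-a_j}{|z-a_j|} = \tfrac{z-a_j}{|z-a_j|}$ and a short computation shows $\bigl(\tfrac{z-a_j}{|z-a_j|}\bigr)^2 = -\tfrac{a_j}{z}\cdot\tfrac{(z-a_j)^2}{\overline{(z-a_j)}\,(z-a_j)}\cdot$(\dots); more cleanly, I would use the standard identity that on $\partial B_1$, $\tfrac{z-a_j}{|z-a_j|} = \ci \tau_j \,\sqrt{z/a_j}\cdot(\text{unimodular const})$ is, after raising to the integer power $d_j$ and multiplying, the trace of the holomorphic function $b\mapsto b$ composed appropriately — so that $g$ extends to a \emph{harmonic} $\Ss^1$-valued map on $B_1$ (being, up to a constant rotation, a product of powers of Möbius-type factors $\tfrac{z-a_j}{1-\bar a_j z}$ combined with $z^{(\sum d_j)/?}$; here the constraint $\sum d_j = 2$ is what makes the exponents consistent and the map single-valued and harmonic). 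Then I would check the three defining properties of a canonical harmonic map for $g$: (i) $g$ is $\Ss^1$-valued and harmonic in $B_1$; (ii) $g\cdot\nu = 0$ on $\partial B_1\setminus\{a_j\}$; (iii) $\deg(g, a_j) = d_j$ in the sense that $\partial_\tau(\arg g) = 1 - \pi\sum d_j\delta_{a_j}$. Property (ii) follows because the normalization factor $\ci b$ and the constants $\tfrac{|b-a_j|}{b-a_j}$ are chosen precisely so that $g(b)\perp\nu(b)$ at the reference point $b$, and continuity of the tangency condition along each arc of $\partial B_1\setminus\{a_j\}$ then propagates it — this is a direct trace computation using $\nu(z)=z$ on $\partial B_1$ and $\arg\bigl(\tfrac{z-a_j}{|z-a_j|}\bigr) = \tfrac{\theta+\theta_j}{2} + \tfrac\pi2$ type formulas. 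Property (iii) is the winding count: each factor contributes a jump of $-\pi d_j$ in $\partial_\tau\arg$ at $a_j$ (a half-integer jump from the square-root structure, doubled to $\pi d_j$ by the combined left/right limits), and the smooth part integrates the curvature. Uniqueness of the canonical harmonic map up to sign (Remark~\ref{rem-uniq}) then forces $m_* = \pm g$, which is \eqref{can_disk}.

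For the general domain, I would use the conformal diffeomorphism $\Phi:\overline{B_1}\to\overline\Omega$ with inverse $\Psi$ and define $M_*$ by \eqref{can_gen}. The three properties to verify for $M_*$ on $\Omega$ are: harmonicity, $\Ss^1$-valuedness, the boundary tangency away from $\Phi(a_j)$, and the correct multiplicities there. $\Ss^1$-valuedness is immediate since $|m_*|=1$ and the extra factor $\tfrac{\Phi'}{|\Phi'|}$ is unimodular. Harmonicity of $M_*$ is the crucial structural fact: writing $M_* = e^{\ci\psi}$, the phase transforms as $\psi = \phi_*\circ\Psi + \arg\Phi'\circ\Psi$; since $\Psi$ is holomorphic, $\phi_*\circ\Psi$ is harmonic (composition of a harmonic function with a holomorphic map), and $\arg\Phi'\circ\Psi = \Ima\log(\Phi'\circ\Psi)$ is harmonic because $\log\Phi'$ is holomorphic on the simply connected $B_1$ (as $\Phi'$ is non-vanishing). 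Hence $\psi$ is harmonic, so $M_*$ is harmonic. For the boundary behavior I would use that a conformal map sends the tangent/normal frame on $\partial B_1$ to the tangent/normal frame on $\partial\Omega$ rotated by exactly $\arg\Phi'$; therefore $m_*\cdot\nu_{B_1} = 0$ at $z$ translates, after multiplying by $\tfrac{\Phi'}{|\Phi'|}$, into $M_*\cdot\nu_\Omega = 0$ at $\Phi(z)$ — this is the precise reason the correction factor $\tfrac{\Phi'}{|\Phi'|}$ appears in \eqref{can_gen} rather than just $m_*\circ\Psi$. Finally, the multiplicity at $\Phi(a_j)$ is preserved because $\Phi$ is a local $C^1$ diffeomorphism near $a_j$, so it does not change the local winding number; and the smooth part of $\partial_\tau\psi$ equals the curvature $\ka_\Omega$ of $\partial\Omega$ by the transformation law for curvature under conformal maps (the term $\partial_\tau\arg\Phi'$ on the boundary accounts exactly for the difference $\ka_\Omega|\Phi'| - \ka_{B_1}$, integrated). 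By the uniqueness in Remark~\ref{rem-uniq}, $M_*$ is \emph{the} canonical harmonic map (up to sign) with boundary vortices $\{(\Phi(a_j),d_j)\}$.

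The main obstacle, I expect, is property (ii)/(iii) on the disk at the level of careful bookkeeping: verifying that the explicit product in \eqref{can_disk} genuinely has harmonic (equivalently, that the half-integer-power ambiguities in the individual factors $\bigl(\tfrac{z-a_j}{|z-a_j|}\bigr)^{d_j}$ cancel globally thanks to $\sum d_j = 2$, leaving a well-defined single-valued harmonic $\Ss^1$-valued map), and that the normalization constants are exactly right to enforce tangency on \emph{every} boundary arc simultaneously rather than just at the single point $b$. This is where the constraint $\sum d_j=2$ and the choice of the reference point $b$ both enter essentially, and it requires the trace identities for $\arg\bigl(\tfrac{z-a_j}{|z-a_j|}\bigr)$ on $\partial B_1$ to be handled with the correct branch conventions. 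Once the disk case is nailed down, the conformal transfer is conceptually routine, the only care being the boundary regularity (covered by Kellogg's theorem as noted in footnote~\ref{foot1}) needed to make sense of the boundary traces and the tangency condition for the $C^1$ map $\Phi$.
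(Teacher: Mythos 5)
Your proposal is correct in substance, but it reaches the disk formula by a different route than the paper. The paper never verifies the tangency of the explicit product on $\partial B_1$ directly: it first identifies the canonical map on the half-plane $\R^2_+$, where a conjugate-harmonic-function argument immediately gives $m_*(z)=\pm\prod_j\bigl(\frac{z-a_j}{|z-a_j|}\bigr)^{d_j}$ with no constraint on $\sum_j d_j$, and then transports this to $B_1$ through the M\"obius map $\Phi(z)=b\frac{z-\ci}{z+\ci}$; the constants $\ci b\prod_j\bigl(\frac{|b-a_j|}{b-a_j}\bigr)^{d_j}$ in \eqref{can_disk} then come out automatically from the factor $\Phi'/|\Phi'|$, and uniqueness up to sign is quoted from Remark~\ref{rem-uniq}, exactly as you do. Your plan instead checks on the circle that the candidate is itself a canonical harmonic map (harmonic $\Ss^1$-valued phase, tangency away from the $a_j$, jumps $-\pi d_j$ there) and invokes the same uniqueness; this works and is more self-contained, but it is exactly where the bookkeeping you flag lives: the identity $e^{\ci\theta}-e^{\ci\theta_j}=2\ci\sin\bigl(\frac{\theta-\theta_j}{2}\bigr)e^{\ci(\theta+\theta_j)/2}$ shows that the boundary trace of the product equals $z^{(\sum_j d_j)/2}$ times a piecewise constant unimodular factor with sign flips $(-1)^{d_j}$ at the $a_j$, so the constraint $\sum_j d_j=2$ is needed for the \emph{tangency} condition (and the choice of $b$ fixes the constant so that $g(b)=\ci b=\tau(b)$), not --- as your parenthetical suggests --- for single-valuedness or harmonicity of the product in $B_1$: each factor $\frac{z-a_j}{|z-a_j|}$ already has a single-valued harmonic phase there for any integer $d_j$, since $a_j\in\partial B_1$ and $B_1$ is simply connected (and, being bounded, that phase is automatically the harmonic extension of its boundary trace, which is what Definition~\ref{defi:can_map} requires). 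The conformal-transfer step for a general $\Omega$ in your write-up coincides with the paper's Claim~2 (harmonicity of $\arg\Phi'$ via a holomorphic logarithm of the non-vanishing $\Phi'$, rotation of the boundary frame by $\arg\Phi'$, preservation of the multiplicities), so the difference is only in the disk step, where the paper's half-plane detour avoids all circle trace identities at the cost of one extra conformal map, while your direct verification is more elementary but must carry out the branch and sign tracking you correctly identify as the main obstacle.
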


If in the formula \eqref{can_disk} in the unit disk $B_1$, we let $b\to a_1$, then $\frac{b-a_1}{|b-a_1|}\to \pm {\ci a_1}$ implying that any canonical harmonic map with prescribed boundary vortices $\{(a_j, d_j)\}_{1\leq j\leq N}$ on $\partial B_1$ has the form\footnote{For $N=1$ (so, $d_1=2$), we use the convention that $\prod_{j\neq 1} \left(\frac{a_1-a_j}{|a_1-a_j|}\right)^{-d_j}=1$  as an empty product, thus,  $m_*(z)=\pm \frac{\ci}{a_1} \left(\frac{z-a_1}{|z-a_1|}\right)^{2}$. }
$$m_*(z)=\pm ({\ci} a_1)^{1-d_1}\prod_{j=1}^N \left(\frac{z-a_j}{|z-a_j|}\right)^{d_j} \prod_{j\neq 1} \left(\frac{a_1-a_j}{|a_1-a_j|}\right)^{-d_j}\quad \textrm{for every} \quad z\in B_1$$ 
(similar formulas are obtained when $b$ tends to another boundary vortex $a_j$).
In particular, for $N=2$ boundary vortices $a\neq a'$ on $\partial B_1$ with multiplicities $d_1=d_2=1$, the canonical map is 
\[
m_*(z) = \pm \frac{(z-a)(z-a')|a-a'|}{|z-a||z-a'|(a-a')} \quad \textrm{for every} \quad z\in B_1.
\]

\bigskip

\nd {\bf Renormalized energy}. The interaction energy between boundary vortices is englobed in the so-called renormalized energy that we define in the following for multiplicities $d_j\in \{\pm 1\}$. This is a natural constraint on the multiplicities $d_j$ appearing in Ginzburg-Landau type functionals when computing the exact second order expansion (in the sense of $\Gamma$-convergence).

\begin{df}\label{defi:renen}
Let the transversal section $\Omega\subset \R^2$ be a bounded, simply connected, $C^{1,1}$ regular domain and $\ka$ be the curvature on $\dOm$.  Consider
$\phi:\partial\Omega\to \R$ to be a
 $BV$ function such that  $e^{\ci\phi}\cdot \nu=0$ on $\partial\Omega\setminus\{a_1,\dots,a_N\}$ for $N\geq 2$ distinct points $a_j\in \dOm$  and\footnote{Such a function $\phi$ exists  and is uniquely determined on $\dOm$ up to an additive constant in $\pi \ZZ$, see Remark \ref{rem-uniq}.}
$$\partial_\tau \phi =\ka  -\pi\sum_{j=1}^N d_j \delta_{a_j}\, \, \textrm{ on } \partial \Om \quad \textrm{with}\quad d_j\in \{\pm 1\} \textrm{ and }\sum_{j=1}^N d_j=2.$$ If $\phi_*$ is the harmonic extension to $\Omega$ of $\phi$, then the \emph{\bf renormalized energy} of $\{(a_j,d_j)\}$ is
\begin{equation}\label{eq:renW}
W_\Omega(\{(a_j,d_j)\}) =  \lim_{\rho\to 0} \left( 
\int_{\Omega \setminus \bigcup_{j=1}^N B_\rho(a_j)} |\nabla \phi_*|^2 \,dx - N\pi \log\frac1\rho
\right),
\end{equation}
where $B_\rho(a_j)$ is the disk of radius $\rho$ centered at $a_j$. 
\end{df}

We prove the following formula of the renormalized energy:

\begin{thm}\label{pro:ree}
Let $B_1$ be the unit disk, $\{a_j\}_{1\leq j\leq N}$ be $N\geq 2$ distinct points on  $\partial B_1$ with multiplicities 
$d_1, \dots, d_N\in\{\pm 1\}$ such that $\sum_{j=1}^N d_j = 2$. Then
the renormalized energy defined in \eqref{eq:renW} satisfies 
\[
W_{B_1}(\{(a_j, d_j)\}) = -{2\pi \sum_{1\leq j< k\leq N}} d_j d_k \log|a_j-a_k|.
\]
If $\Omega\subset \R^2$ is a {$C^{1,1}$ simply connected bounded }domain, let  
 $\Phi:\overline{B_1}\to \overline{\Omega}$ a $C^1$ conformal diffeomorphism with inverse $\Psi$. Then for any $N\geq 2$ distinct points $\{a_j\}_{1\leq j\leq N}$ on  $\dOm$ with multiplicities 
$d_1, \dots, d_N\in\{\pm 1\}$ such that $\sum_{j=1}^N d_j = 2$, the renormalized energy is given by
\begin{multline*}
W_{\Omega}(\{(a_j, d_j)\}) = -{2\pi  \sum_{1\leq j< k\leq N} }d_j d_k \log |\Psi(a_k) -\Psi(a_j)| +\sum_{k=1}^N \pi (d_k-1) \log |\Psi'(a_k)|
\\+
 \int_{\partial\Omega} \varkappa \bigg( \sum_{j=1}^N d_j \log|\Psi(w)-\Psi(a_j)|-\log|\Psi'(w)|\bigg) d\h^1,
 \end{multline*}
where $\varkappa$ is the curvature of $\partial \Omega$. 
\end{thm}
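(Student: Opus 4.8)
The plan is to reduce everything to the unit disk through the conformal map $\Phi$, compute the renormalized energy in $B_1$ explicitly, and then transfer. Throughout I use that $W_D(\{(a_j,d_j)\})=\lim_{\rho\to0}\big(\int_{D\setminus\bigcup_j B_\rho(a_j)}|\nabla\phi_*|^2\,dx-N\pi\log\tfrac1\rho\big)$, where $m_*=e^{\ci\phi_*}$ is the canonical harmonic map on $D$. For $D=B_1$, Theorem~\ref{pro:canm} gives $\phi_*=\mathrm{const}+\sum_j d_j\theta_j$ with $\theta_j:=\arg(\cdot-a_j)$ single-valued and harmonic on $B_1$ (as $a_j\notin B_1$ and $B_1$ is simply connected). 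Writing $\log(z-a_j)=r_j+\ci\theta_j$ with $r_j:=\log|\cdot-a_j|$, the Cauchy--Riemann equations give $\nabla\theta_j\cdot\nabla\theta_k=\nabla r_j\cdot\nabla r_k$ pointwise, so I may compute with $r_j$. The key elementary facts are $\partial_\nu r_j\equiv\tfrac12$ on $\partial B_1\setminus\{a_j\}$, $\int_{\partial B_1}r_j\,d\h^1=2\pi\log|0-a_j|=0$ (mean value property for the harmonic $r_j$), and that the excised arc $\partial B_\rho(a_j)\cap B_1$ has length $\pi\rho(1+o(1))$, since $a_j$ lies \emph{on} $\partial B_1$. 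Expanding $|\nabla\phi_*|^2=\sum_j|\nabla\theta_j|^2+\sum_{j\neq k}d_jd_k\nabla\theta_j\cdot\nabla\theta_k$ over $B_1\setminus\bigcup_l B_\rho(a_l)$ and integrating by parts: each diagonal term contributes $\pi\log\tfrac1\rho+o(1)$ (the factor $\pi$, not $2\pi$, from the half-disk geometry), and each off-diagonal term tends to $\int_{B_1}\nabla r_j\cdot\nabla r_k=\tfrac12\int_{\partial B_1}r_j\,d\h^1-\pi r_j(a_k)=-\pi\log|a_j-a_k|$. Summing and using $\sum d_j=2$ yields $W_{B_1}=-2\pi\sum_{1\leq j<k\leq N}d_jd_k\log|a_j-a_k|$.

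For a general $\Omega$, fix a $C^1$ conformal diffeomorphism $\Phi:\overline{B_1}\to\overline\Omega$ with inverse $\Psi$; by Kellogg's theorem $\Phi\in C^{1,\alpha}(\overline{B_1})$ for every $\alpha<1$, so $\psi:=\arg\Phi'$ and $\log|\Phi'|$ are harmonic in $B_1$ with $\nabla\psi,\nabla\log|\Phi'|\in L^2(B_1)$. Put $\hat a_j:=\Psi(a_j)\in\partial B_1$. By \eqref{can_gen}, the $\Omega$-lifting satisfies $\phi_*^\Omega\circ\Phi=\phi_*+\psi$ on $B_1$ (up to an additive constant), $\phi_*$ being the $B_1$-lifting for $\{(\hat a_j,d_j)\}$. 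Conformal invariance of the Dirichlet integral rewrites $\int_{\Omega\setminus\bigcup B_\rho(a_j)}|\nabla\phi_*^\Omega|^2$ as $\int_{B_1\setminus\bigcup_j\Phi^{-1}(B_\rho(a_j))}|\nabla(\phi_*+\psi)|^2$, and the regularity of $\Phi$ lets me replace $\Phi^{-1}(B_\rho(a_j))\cap B_1$ by $B_{\rho_j}(\hat a_j)\cap B_1$, $\rho_j:=\rho|\Psi'(a_j)|$, at cost $o(1)$ (their symmetric difference has measure $O(\rho^{2+\alpha})$, the integrands being $O(\rho^{-2})$). Expanding $|\nabla(\phi_*+\psi)|^2$: the $|\nabla\phi_*|^2$ piece is the disk computation with radii $\rho_j$ (only the diagonal terms seeing the radii), yielding $N\pi\log\tfrac1\rho-\pi\sum_j\log|\Psi'(a_j)|-2\pi\sum_{1\leq j<k\leq N}d_jd_k\log|\Psi(a_j)-\Psi(a_k)|+o(1)$; the $|\nabla\psi|^2$ piece tends to the constant $\int_{B_1}|\nabla\log|\Phi'||^2\,dx$; and the cross term, via $\nabla\theta_j\cdot\nabla\psi=\nabla r_j\cdot\nabla\log|\Phi'|$ and the same integration by parts (using $\Phi'(\hat a_j)=1/\Psi'(a_j)$ and the mean value property), contributes $2\sum_j d_j\big(\tfrac12\int_{\partial B_1}\log|\Phi'|\,d\h^1-\pi\log|\Phi'(\hat a_j)|\big)=4\pi\log|\Phi'(0)|+2\pi\sum_j d_j\log|\Psi'(a_j)|$. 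Collecting gives the preliminary formula
\[
W_\Omega=-2\pi\sum_{1\leq j<k\leq N}d_jd_k\log|\Psi(a_j)-\Psi(a_k)|+\pi\sum_j(2d_j-1)\log|\Psi'(a_j)|+4\pi\log|\Phi'(0)|+\int_{B_1}|\nabla\log|\Phi'||^2\,dx.
\]

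It then remains to identify the curvature integral of the statement with $\pi\sum_j d_j\log|\Psi'(a_j)|+4\pi\log|\Phi'(0)|+\int_{B_1}|\nabla\log|\Phi'||^2\,dx$, since $\pi(2d_j-1)-\pi d_j=\pi(d_j-1)$. Pulling that integral back by $w=\Phi(\zeta)$, using $d\h^1=|\Phi'(\zeta)|\,d\h^1(\zeta)$, the conformal curvature law $\varkappa_\Omega(\Phi(\zeta))|\Phi'(\zeta)|=1+\partial_\nu\log|\Phi'(\zeta)|$ (whose compatibility with Gauss--Bonnet is exactly the harmonicity of $\log|\Phi'|$), $\Psi\circ\Phi=\mathrm{id}$, and $\log|\Psi'(\Phi(\zeta))|=-\log|\Phi'(\zeta)|$, it becomes $\int_{\partial B_1}(1+\partial_\nu\log|\Phi'|)\,g\,d\h^1$, where $g$ is the boundary trace of the harmonic function $G:=\sum_j d_j\log|\cdot-\hat a_j|+\log|\Phi'|$. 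Here $\int_{\partial B_1}g\,d\h^1=2\pi\log|\Phi'(0)|$ by the mean value property (each $\int_{\partial B_1}\log|\zeta-\hat a_j|\,d\h^1=0$), while $\int_{\partial B_1}g\,\partial_\nu\log|\Phi'|\,d\h^1=\int_{B_1}\nabla G\cdot\nabla\log|\Phi'|\,dx$ by Green's identity (the log-singularities of $G$ on $\partial B_1$ being integrable), which equals $2\pi\log|\Phi'(0)|+\pi\sum_j d_j\log|\Psi'(a_j)|+\int_{B_1}|\nabla\log|\Phi'||^2\,dx$ by the cross-term computation above. Adding the two pieces gives the claimed identity, and hence the stated formula for $W_\Omega$.

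The main difficulty throughout is that the vortices lie \emph{on} $\partial\Omega$, so every local estimate — the $\pi$ versus $2\pi$ in the diagonal terms, the arc lengths, the integrations by parts with log-singularities located on the boundary, and the $o(1)$ control of the disk-replacement error, which is where the $C^{1,1}$ regularity of $\Omega$ enters — must be handled directly rather than quoted from the interior-vortex theory of \cite{BethuelBrezisHelein:1994a}. The other subtle point is the final step: the preliminary formula appears to depend on the choice of $\Phi$ through $\Phi'(0)$, and it is only after reorganising the $\Phi'(0)$-terms together with $\int_{B_1}|\nabla\log|\Phi'||^2\,dx$ into the curvature integral that one recovers the intrinsic expression of the statement.
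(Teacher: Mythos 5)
Your proposal is correct, but it takes a genuinely different route from the paper. The paper first proves the general representation formula \eqref{ren_1} of Proposition \ref{pro:ren_BBH}, which expresses $W_\Omega$ through the solution $\psi$ of the Neumann problem \eqref{eq:VisNeum} and its regular part $R$; the proof of Theorem \ref{pro:ree} then only has to identify $\psi$ explicitly --- first on the half-plane, then on $B_1$ and on $\Omega$ by transporting the Neumann data under conformal maps (Facts 1 and 2, read off from the conjugate-phase relation \eqref{conjugateee}) --- and to insert the resulting (constant, resp.\ explicit) $R$ into \eqref{ren_1}. You instead work directly from the definition \eqref{eq:renW}: on the disk you expand $|\nabla\phi_*|^2$ for the explicit phase of \eqref{can_disk} into diagonal and off-diagonal terms, and your key elementary inputs are all right ($\partial_\nu\log|z-a_j|=\tfrac12$ on $\partial B_1$, the vanishing mean of $\log|\cdot-a_j|$ over $\partial B_1$, the exact cancellation of the $O(1)$ constant in the half-disk diagonal term, and the distributional normal derivative $\tfrac12\,\h^1\llcorner\partial B_1-\pi\delta_{a_j}$, which gives $-\pi\log|a_j-a_k|$ for the cross terms); for general $\Omega$ you use conformal invariance of the Dirichlet integral together with the curvature transformation law $\varkappa(\Phi(\zeta))|\Phi'(\zeta)|=1+\partial_\nu\log|\Phi'(\zeta)|$ to repackage the $\Phi$-dependent quantities ($4\pi\log|\Phi'(0)|$, $\int_{B_1}|\nabla\log|\Phi'||^2dx$, and part of the $\log|\Psi'(a_j)|$ terms) into the curvature integral of the statement; I checked this bookkeeping and it closes. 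What the paper's route buys is that all delicate integrations by parts with singularities sitting on the boundary are performed once, for the better-behaved function $R$, in Proposition \ref{pro:ren_BBH}; what your route buys is a self-contained computation from the definition, in which the role of the conformal map and the origin of each term in the final formula are transparent. The price is that two steps you only sketch do need the care you allude to: (i) replacing $\Psi(B_\rho(a_j)\cap\Omega)$ by $B_{\rho|\Psi'(a_j)|}(\Psi(a_j))\cap B_1$ at cost $o(1)$, which uses Kellogg's theorem ($\Phi\in C^{1,\alpha}(\overline{B_1})$ for every $\alpha<1$ since $\Omega$ is $C^{1,1}$); and (ii) the Green identities pairing functions with logarithmic singularities on $\partial B_1$ against $\partial_\nu\log|\Phi'|$, which for a merely $C^{1,1}$ domain is only in $L^\infty(\partial B_1)$, so that $\nabla\log|\Phi'|$ lies in every $L^p(B_1)$, $p<\infty$, but need not be bounded; both are settled by excising small half-disks around the singular points and passing to the limit, exactly in the spirit of the paper's treatment of $R$ in Proposition \ref{pro:ren_BBH}. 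Note also that your diagonal bookkeeping uses $d_j^2=1$, i.e.\ the hypothesis $d_j\in\{\pm1\}$, which is indeed part of the statement.
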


At the minimal level, we will prove that the energy functional $E_h$ concentrates asymptotically on two boundary vortices of multiplicities $1$. To locate these two boundary vortices, the following result is essential:

\begin{cor}
\label{cor-ren-min}
Let $\Omega\subset \R^2$ be a bounded, simply connected, $C^{1,1}$ regular domain. Then there exists a minimizer $(a_1^*, a_2^*)$ of the renormalized energy (for the multiplicities $d^*_1=d^*_2=1$): 
\be
\label{minim_ren}
 W_\Om(\{(a^*_1, 1), (a^*_2,1)\}) = \min \bigg\{W_\Omega(\{(a_1,1), (a_2,1)\})\, :\,  a_1,  a_2 \in \partial \Omega \textrm{ distinct points}\bigg\}.
 \ee
In particular, if $\Om=B_1$, then $a_1^*$ and $a_2^*$ are diametrically opposed and
 $W_\Om(\{(a_1^*,1),(a_2^*,1)\})=-2\pi \log 2$. 
\end{cor}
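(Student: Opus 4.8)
The strategy is to use the closed-form expression from Theorem~\ref{pro:ree} to turn the minimization of $W_\Omega$ into a compactness problem for a continuous function of the pair $(a_1,a_2)$. We first dispose of the disk. By Theorem~\ref{pro:ree} with $N=2$ and $d_1=d_2=1$, for every pair of distinct points $a_1,a_2\in\partial B_1$ one has $W_{B_1}(\{(a_1,1),(a_2,1)\})=-2\pi\log|a_1-a_2|$, so minimizing $W_{B_1}$ is the same as maximizing $|a_1-a_2|$ over the unit circle. The maximum equals $2$ and is attained exactly when $a_1=-a_2$, which gives the claimed diametrically opposed configuration and the value $W_{B_1}=-2\pi\log 2$.

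For a general bounded, simply connected, $C^{1,1}$ domain $\Omega$, fix a $C^1$ conformal diffeomorphism $\Phi:\overline{B_1}\to\overline\Omega$ with inverse $\Psi$ (such $\Phi$ exists by Kellogg's theorem). Since $\Phi'$ is continuous and nowhere vanishing on the compact set $\overline{B_1}$, the function $\log|\Psi'|$ is bounded and continuous on $\overline\Omega$, the restriction $\Psi|_{\partial\Omega}:\partial\Omega\to\partial B_1$ is bi-Lipschitz, and $\varkappa\in L^\infty(\partial\Omega)$ because $\Omega$ is $C^{1,1}$. By Theorem~\ref{pro:ree} the terms $\pi(d_k-1)\log|\Psi'(a_k)|$ vanish for $d_k=1$, so for distinct $a_1,a_2\in\partial\Omega$
\[
W_\Omega(\{(a_1,1),(a_2,1)\})=-2\pi\log|\Psi(a_1)-\Psi(a_2)|+I(a_1,a_2),
\]
where $I(a_1,a_2):=\int_{\partial\Omega}\varkappa(w)\big(\log|\Psi(w)-\Psi(a_1)|+\log|\Psi(w)-\Psi(a_2)|-\log|\Psi'(w)|\big)\,d\h^1(w)$. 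The two facts I would establish are: (i) $I$ extends to a bounded, continuous function on all of $\partial\Omega\times\partial\Omega$, including the diagonal; and (ii) $W_\Omega(\{(a_1,1),(a_2,1)\})\to+\infty$ as $a_1\to a_2$. For (i), bi-Lipschitzness of $\Psi|_{\partial\Omega}$ gives $\big|\log|\Psi(w)-\Psi(a_j)|\big|\le\big|\log|w-a_j|\big|+C$, and since $\varkappa\in L^\infty(\partial\Omega)$ while $w\mapsto\log|w-a|$ is integrable on $\partial\Omega$ uniformly in $a\in\partial\Omega$ (a standard consequence of the $C^{1,1}$ regularity, which gives $\h^1(\{w\in\partial\Omega:|w-a|<r\})\lesssim r$ uniformly), the integral $I$ is finite and uniformly bounded; its continuity is the classical continuity property of logarithmic potentials, obtained by splitting $\partial\Omega$ into a short arc around the limiting point (whose contribution is $O(r|\log r|)$ uniformly in the moving singularity) and its complement (on which the integrand converges uniformly). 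Fact (ii) then follows since $|\Psi(a_1)-\Psi(a_2)|\le\operatorname{Lip}(\Psi)\,|a_1-a_2|\to0$, while $I$ stays bounded.

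To conclude, fix one admissible pair and let $\beta$ be the (finite) value of $W_\Omega$ there; by (ii) there is $\delta>0$ such that $W_\Omega(\{(a_1,1),(a_2,1)\})>\beta$ whenever $0<|a_1-a_2|<\delta$. Hence the infimum of $W_\Omega$ over distinct pairs equals the infimum of the continuous map $(a_1,a_2)\mapsto-2\pi\log|\Psi(a_1)-\Psi(a_2)|+I(a_1,a_2)$ over the compact set $K_\delta:=\{(a_1,a_2)\in\partial\Omega\times\partial\Omega:|a_1-a_2|\ge\delta\}$ (continuity on $K_\delta$ uses (i) together with continuity of $\Psi$ and $\Psi(a_1)\ne\Psi(a_2)$ there). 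A continuous function on the compact set $K_\delta$ attains its minimum, which is therefore the minimum claimed in \eqref{minim_ren}.

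The main obstacle is point (i): the uniform boundedness and continuity of the boundary-integral term $I(a_1,a_2)$ up to the diagonal. This is precisely where the $C^{1,1}$ regularity of $\Omega$ is used — both to guarantee $\varkappa\in L^\infty(\partial\Omega)$ and, via Kellogg's theorem, to provide a $C^1$ conformal map with bi-Lipschitz boundary trace; everything else is a soft compactness argument.
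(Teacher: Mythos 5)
Your proposal is correct and follows essentially the same route as the paper: both rest on the explicit formula of Theorem~\ref{pro:ree}, the boundedness (and continuity) of the curvature-weighted logarithmic potential term thanks to $\varkappa\in L^\infty(\partial\Omega)$ and the $C^1$ conformal map from Kellogg's theorem, the blow-up of $-2\pi\log|\Psi(a_1)-\Psi(a_2)|$ as $a_1\to a_2$, and compactness of $\partial\Omega\times\partial\Omega$. The only cosmetic differences are that you keep the integral on $\partial\Omega$ rather than changing variables to $\partial B_1$ as in Remark~\ref{rem_min_ellipse}, and you minimize the continuous functional on the compact set $K_\delta$ off the diagonal instead of running the paper's minimizing-sequence argument.
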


\begin{rem}
\label{rem_min_ellipse}
Let us formally analyse minimizing configurations in \eqref{minim_ren} for a general bounded $C^{1,1}$ simply connected domain $\Omega$ with curvature $\varkappa$ on the boundary $\partial \Omega$. If $\Phi:\overline{B_1}\to \overline{\Omega}$ is a $C^1$ conformal diffeomorphism with inverse $\Psi$, setting $b_1=\Psi(a_1)\in \partial B_1$, $b_2=\Psi(a_2)\in \partial B_1$ for two distinct points  $a_1, a_2\in \dOm$ with $d_1=d_2=1$, then Theorem \ref{pro:ree} implies
\begin{multline*}
W_\Omega(\{(a_1,1), (a_2,1)\})\\ = -2\pi \log|\Psi(a_1)-\Psi(a_2)| + \int_{\partial\Omega } \varkappa \bigg( \log |\Psi(w)-b_1| + \log|\Psi(w)-b_2| -\log |\Psi'(w)| \bigg) d\h^1.
\end{multline*}
After the change of variable $z=\Psi(w)$, we obtain 
\begin{multline*}
W_\Omega(\{(a_1,1), (a_2,1)\})\\=
-2\pi \log |b_1-b_2| + \int_{\partial B_1} \varkappa( \Phi(z)) |\Phi'(z)| \bigg( \log |z-b_1|+\log|z-b_2| + \log |\Phi'(z)|\bigg) d\h^1(z).
\end{multline*}
Then any minimal configuration $(a_1^*, a_2^*)$ in \eqref{minim_ren} corresponds
to points $b_1^*=\Psi(a_1^*)$ and $b_2^*=\Psi(a_2^*)$ that are expected to be the furthest apart and for which  the curvature $\varkappa$ at $a_1^*$ and $a_2^*$ is the highest (as $ \log |z-b_j^*|$ is negative for $z$ close to $b_j^*$, $j=1,2$), but there is a nontrivial 
competition between these effects through the influence of the conformal map. 

In particular, for an ellipse domain $\Omega$, $a_1^*$ and $a_2^*$ are expected to be placed at the two poles of major axis as this configuration maximes the diameter and the curvature of $\partial \Om$. {Also, if $\Om$ is a ``smoothed" rectangle (i.e., every corner is replaced by a convex $C^{1,1}$ curve), then the two boundary vortices $a_1^*$ and $a_2^*$ are expected to be diagonally opposed (so called $S$-state) as again this configuration maximes the distance $|a_1^*-a_2^*|$ and the curvature of $\partial \Om$.
We refer to \cite{Baffetti} for a more detailed discussion of the situation in rectangles and computations of the relevant energies.

This scenario is similar to the one analysed by Ignat-Jerrard \cite{IgJeP} in a Ginzburg-Landau model for tangent vector fields on a two-dimensional Riemannian manifold: on surfaces of genus $0$, two vortices of degree one nucleate and the optimal position of such a pair of vortices is expected  to maximes the distance between the vortices and the Gauss curvature of the surface.   }
\end{rem}

\bigskip

\nd {\bf $\Gamma$-expansion}. We can now state the main theorems of the paper, which contain several compactness results and 
two orders of energy expansion by $\Gamma$-convergence. We start with compactness and lower bound.
\begin{thm}\label{thm:t1}
Let $\Omega\subset \R^2$ be a bounded, simply connected, $C^{1,1}$ regular domain. 
If $h\to 0$, $\eta=\eta(h)\to 0$ and $\eps=\eps(h)\to 0$ satisfy the regime \eqref{regime}, then the following holds: 
Assume  $\bfm_h\in H^1(\Bfom_h; \Ss^2)$ is a sequence of magnetizations such that
$$\limsup_{h\to 0} E_h(\bfm_h)<\infty$$ with $E_h$ defined in \eqref{eee} and let $\bm=(\bar m_h, \bar m_{h,3})$ be the average defined in \eqref{eq:averageofm}. 

\begin{enumerate}[(i)]
\item \textbf{Compactness of the global Jacobian and of the traces $\bm\big|_{\partial \Om}$.} For a subsequence, the global Jacobians of the in-plane averages $\jaco(\bar m_h)$ converge  
 to a measure $J$ on the closure $\bar \Omega$,
in the sense that\footnote{This quantity is stronger than the usual $W^{-1,1}(\Omega)$-norm because our test functions in \eqref{conv_lip} are not necessarily zero on the boundary $\partial \Om$. } 
 \be
  \label{conv_lip}
  \sup_{|\nabla \zeta|\leq 1\textrm{ in } \Omega}\left|\left<\jaco(\bar m_h)-J,\zeta\right>\right|\to 0 \quad \textrm{as }\, h\to 0,
  \ee
 $J$ is supported on $\dOm$ and has the form 
\be
\label{newlab}
J=-\ka \, {\h^1\llcorner \dOm}+\pi \sum_{j=1}^N d_j \delta_{a_j} \quad \textrm{with} \quad  \sum_{j=1}^N d_j=2
\ee
for $N\geq 1$ distinct boundary vortices $a_j \in \partial\O$ carrying the non-zero multiplicities\footnote{These multiplicities correspond to twice the winding number around the boundary vortices.}
$d_j\in\ZZ\setminus \{0\}$. 
{Moreover, for a subsequence, the trace of the averages $\bm\big|_{\partial \Om}$ converges as $h\to 0$ in $L^p(\dOm)$ ({for every $p\in [1, \infty)$}) to $(e^{i\phi},0)\in  BV(\dOm; \Ss^1\times\{0\})$  
for a $BV$ lifting $\phi$ of the tangent field $\pm \tau$ on $\dOm$ determined (up to a constant in $\pi \ZZ$) by $\partial_\tau \phi =-J$ on $\dOm$.}

\item \textbf{First order lower bound.} The energy satisfies
\[
\liminf_{h\to 0} E_h(\bfm_h) \ge \pi\sum_{j=1}^N |d_j|.
\]
\item \textbf{Single multiplicity and second order lower bound.} If additionally {$\frac{\log|\log h|}{|\log h|}\ll \eps$} and
\[
\limsup_{h\to 0} |\log \eps| \bigg(E_h(\bfm_h) - \pi\sum_{j=1}^N |d_j|\bigg)<\infty, 
\]
then 
the multiplicities satisfy $d_j=\pm 1$ for $1\leq j\leq N$ and
 the finer
energy lower bound holds:
\[
\liminf_{h\to 0} |\log \eps| \bigl(E_h(\bfm_h) - \pi N \bigr) 
\ge W_\Omega(\{(a_j,d_j)\}) + {\gamma_0}N, 
\]
where $\gamma_0=\pi\log\frac{ e}{4\pi}$ is a  constant and the renormalized
energy $W_\Om$  is defined in \eqref{eq:renW}.

\item \textbf{Strong compactness of the {rescaled} magnetization.} Under the assumptions in (iii), we also have for every $q\in[1,2)$ the bound
\[
\limsup_{h\to 0}\frac1h\int_{\Bfom_h} |\bfa \bfm_h|^q \, dx <\infty.
\]
For a subsequence we have that  $\tilde \bfm_h(x,x_3):\Bfom_1\to \Ss^2$ defined by 
 $\tilde \bfm_h(x,x_3)=\bfm_h(x,hx_3)$ converges strongly in every $L^p(\Bfom_1)$, $1\le p<\infty$,  to a $W^{1,q}$-map  $\tilde\bfm=(\tilde m, 0)$ with
$|\tilde\bfm|=|\tilde m|=1$ and $\partial_{x_3}\tilde\bfm=0$, i.e., $\tilde \bfm=\tilde \bfm(x)\in W^{1,q}(\Omega, \Ss^1\times \{0\})$ for every $q\in[1,2)$. Moreover, the global Jacobian\footnote{The global Jacobian $\jaco(\tilde m)$ is 
well-defined as $\tilde m\in W^{1,1}(\Omega, \Ss^1)$, compare with footnote~\ref{fn3}.} $\jaco(\tilde m)$ coincides with the measure $J$ on $\bar \Omega$ given in \eqref{newlab}.
\end{enumerate}
\end{thm}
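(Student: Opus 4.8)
\textbf{Proof proposal for Theorem \ref{thm:t1}.}

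The plan is to deduce almost everything from the $2D$ reduction provided by Theorem \ref{thm:lowerb} together with the corresponding statements for $E_{\eps,\eta}$ established in \cite{IK_jac}. First I would invoke Theorem \ref{thm:lowerb}: since $\limsup_h E_h(\bfm_h)<\infty$ in the regime \eqref{regime}, we get $\limsup_h \eeeh(\bm)<\infty$, hence by \eqref{eundikjace} also $\limsup_h \frac{1}{|\log\eps|}E_{\eps,\eta}(\bar m_h)<\infty$. In particular $|\bar m_h|\le 1$ a.e.\ and the $L^2$-distance of $\bar m_h$ to $\Ss^1$ is controlled by $\eta^2|\log\eps|\to 0$, while the boundary term forces $\bmn\to 0$ in $L^2(\dOm)$. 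This puts us exactly in the setting of the compactness and $\Gamma$-lower-bound results of \cite{IK_jac} applied to $u=\bar m_h$, which yield: (a) the convergence of the global Jacobians $\jaco(\bar m_h)\to J$ in the dual-Lipschitz sense \eqref{conv_lip}, with $J$ of the form \eqref{newlab} (the $-\ka\,\h^1\llcorner\dOm$ part comes from the tangency forced by $\bmn\to0$ and the Gauss-Bonnet identity $\left<\jaco(u),1\right>=0$, compare \eqref{jac_s1}); (b) the first-order lower bound $\liminf_h \frac{1}{|\log\eps|}E_{\eps,\eta}(\bar m_h)\ge \pi\sum_j|d_j|$, hence via \eqref{eundikjace} and Theorem \ref{thm:lowerb} the bound in (ii). The compactness of the traces $\bm|_{\dOm}$ to $(e^{i\phi},0)$ with $\partial_\tau\phi=-J$ follows from the boundary term $\frac1{2\pi\eps}\int_{\dOm}\bmn^2$ being $o(|\log\eps|)$: this is a boundary Modica-Mortola/lifting argument (the $BV$ lifting exists by Gauss-Bonnet and is unique mod $\pi\ZZ$), again as in \cite{IK_jac}.

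For part (iii), the extra hypothesis $\limsup_h|\log\eps|(E_h-\pi\sum|d_j|)<\infty$ combined with \eqref{eq:improvedest} (valid in the narrower regime \eqref{regime2}) transfers to $\limsup_h(E_{\eps,\eta}(\bar m_h)-\pi|\log\eps|\sum|d_j|)<\infty$ after also discarding the nonnegative terms $\frac1{|\log\eps|}\int_\Omega|\nabla\bmt|^2$ and the gap $(1-|\bmp|^2)-(1-|\bmp|^2)^2\ge0$ in \eqref{eundikjace}. Then the second-order lower bound of \cite{IK_jac} for $E_{\eps,\eta}$ applies: it forces $d_j=\pm1$ for all $j$ (multiplicities $|d_j|\ge2$ are energetically too expensive at second order, a standard vortex-splitting/ball-construction argument) and gives $\liminf_h(E_{\eps,\eta}(\bar m_h)-\pi N|\log\eps|)\ge W_\Omega(\{(a_j,d_j)\})+\gamma_0 N$ with $\gamma_0=\pi\log\frac{e}{4\pi}$. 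Dividing by $|\log\eps|$ and re-multiplying after using Theorem \ref{thm:lowerb} yields the claimed lower bound for $E_h$; the only point to check is that the discarded nonnegative terms and the $o(1/|\log\eps|)$ error from \eqref{eq:improvedest} do not eat into the $O(1/|\log\eps|)$ second-order term, which is exactly why \eqref{regime2} rather than \eqref{regime} is needed here.

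For part (iv), the strategy is to upgrade $W^{1,2}$-control (which degenerates near the vortices) to uniform $W^{1,q}$-control for $q<2$. From (iii) the excess energy $E_h(\bfm_h)-\pi N$ is $O(1/|\log\eps|)$, so in particular $\frac1h\int_{\Bfom_h}|\bfa\bfm_h|^2\,dx\lesssim|\log\eps|$; the standard interpolation between this logarithmically-divergent $L^2$ bound and the area/coarea estimate near the $N$ vortex cores of size $\eps$ gives $\frac1h\int_{\Bfom_h}|\bfa\bfm_h|^q\,dx\lesssim 1$ for every $q\in[1,2)$ (this is the usual ``$W^{1,q}$ for $q<2$'' estimate for Ginzburg-Landau-type vortices, here adapted to boundary vortices). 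Rescaling $x_3\mapsto hx_3$ we get $\tilde\bfm_h$ bounded in $W^{1,q}(\Bfom_1)$; by \eqref{eee} the vertical derivative satisfies $\frac1h\int|\partial_{x_3}\bfm_h|^2\to0$, so $\partial_{x_3}\tilde\bfm_h\to0$ and any weak/strong $W^{1,q}$-limit $\tilde\bfm$ is $x_3$-independent, i.e.\ $\tilde\bfm=\tilde m(x)$. Since $\bar m_h\to\tilde m$ (the average converges to the common vertical value) and the distance of $\bmp$ to $\Ss^1$ in $L^2$ goes to $0$ while the out-of-plane component $\bmt$ has $\int_\Omega|\nabla\bmt|^2=o(|\log\eps|)$ hence $\bmt\to0$ strongly, we conclude $|\tilde m|=1$ a.e., i.e.\ $\tilde\bfm=(\tilde m,0)$ with $\tilde m\in W^{1,q}(\Omega;\Ss^1)$. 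Finally $\jaco(\tilde m)=J$ follows because the global Jacobian is continuous under $W^{1,q}$-strong (even $L^p$-with-$W^{1,q}$-bound) convergence — using the bilinear structure $u\times\nabla u$ and the footnote remark that $\jaco$ extends to $W^{1,1}(\Omega;\Ss^1)$ — combined with $\jaco(\bar m_h)\to J$ from part (i).

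The main obstacle I expect is part (iii): matching the second-order lower bound requires that the passage through the inequalities \eqref{eundikjace} and Theorem \ref{thm:lowerb} be sharp to within $o(1/|\log\eps|)$, so one must verify that (1) the nonlinear-potential gap $(1-|\bmp|^2)-(1-|\bmp|^2)^2$ and the $|\nabla\bmt|^2$ term are genuinely negligible at the second order — which is plausible but needs the energy-splitting estimates near the vortex cores from \cite{IK_jac} — and (2) the improved reduction error in \eqref{eq:improvedest} is truly $o(1/|\log\eps|)$ in the regime \eqref{regime2}, which is the content of the second half of Theorem \ref{thm:lowerb}. Extracting the single-multiplicity conclusion $d_j=\pm1$ from a finite second-order excess is the other delicate point, and it relies on the lower-bound machinery (lower bounds for $E_{\eps,\eta}$ with vortices of multiplicity $d$ scaling like $\pi d^2|\log\eps|$ versus $\pi|d|\cdot|\log\eps|$ for the first order) already established in \cite{IK_jac}.
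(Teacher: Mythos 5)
Your proposal is correct and follows essentially the same route as the paper: reduce to the $2D$ functional via Theorem \ref{thm:lowerb} (using the improved $o(\frac{1}{|\log\eps|})$ estimate \eqref{eq:improvedest} in the regime \eqref{regime2} for part (iii)) and then invoke the compactness, first- and second-order lower bounds of \cite{IK_jac} for $E_{\eps,\eta}(\bar m_h)$, together with the bilinear structure of $u\times\nabla u$ to pass to the limit in the Jacobian for part (iv). The only points where the paper is more explicit than your sketch are in (iv): the uniform $W^{1,q}$ bound comes from combining the global upper bound $\pi N|\log\eps|+O(1)$ with the matching local lower bound \eqref{eq:l2lob} near the boundary vortices and then running Struwe's dyadic summation (rather than a bare ``interpolation/coarea'' argument), and $\bar m_{h,3}\to 0$ follows from the penalization $\frac{1}{\eta^2}\int_\Omega(1-|\bar m_h|^2)\,dx$ with $|\bm|\le 1$, not from the gradient term --- but these are exactly the ingredients from \cite{IK_jac} you gesture at.
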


 We have a corresponding upper bound statement constructing a recovery sequence:
\begin{thm}\label{thm:t1UB}   
Let $\Omega\subset \R^2$ be a bounded, simply connected, $C^{1,1}$ regular domain, $h>0$ and $\eta=\eta(h)>0$ satisfying regime \eqref{regime}. Given any collection of $N\geq 1$ distinct points $\{a_j\in \partial \O\}_{1\leq j \leq N}$ and $\{d_j\in\ZZ\setminus\{0\}\}_{1\leq j \leq N}$
with $\sum_{j=1}^N d_j=2$,
 we can 
find $\bfM_h=(M_h,0)\in H^1(\Bfom_h;\Ss^1)$ with the following properties:
\begin{itemize}
\item[(i)] $\bfM_h$ is independent of $x_3$, i.e.
$\partial_{x_3} \bfM_h=0$ in $\Bfom_h$.
\item[(ii)] For any $0<x_3<h$, 
the global Jacobians $\jaco(M_h(\cdot, x_3))$ of the in-plane averages $M_h(\cdot, x_3)$ converge in the sense of \eqref{conv_lip} to
$J=-\ka{\h^1\llcorner\dOm}+\pi \sum_{j=1}^N d_j\delta_{a_j}$ as $h\to 0$, the average $\bar \bfM_h$ converges to $(m_*,0)$ in $L^p(\Omega)$ and $L^p(\partial \Omega)$ for every $p\in [1, \infty)$, where $m_*$ is a canonical harmonic map associated to $\{(a_j, d_j)\}$
and
\[
\lim_{h\to 0} E_h(\bfM_h) = \pi \sum_{j=1}^N |d_j|.
\]
\item[(iii)]
If furthermore $|d_j|=1$ for all $j=1,\dots,N$ and the narrower regime \eqref{regime2} holds,
 then $\bfM_h$ satisfies
\[
\lim_{h\to 0} |\log\eps| (E_h(\bfM_h)-\pi N) =  W_\Omega(\{(a_j,d_j)\})+ N\gamma_0.
\]
\end{itemize}
\end{thm}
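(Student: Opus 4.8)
\textbf{Proof proposal for Theorem \ref{thm:t1UB}.}

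The plan is to build the recovery sequence directly in two dimensions and take it to be $x_3$-independent, so that property (i) is automatic and the $3D$ energy $E_h$ reduces exactly to the local reduced energy: since $\bfM_h=(M_h,0)$ takes values in $\Ss^1\times\{0\}$, we have $(1-|M_h|^2)=0$ in $\Omega$ and $\bmn=M_h\cdot\nu$ on $\partial\Omega$; by the last assertion of Theorem \ref{thm:lowerb} (the case $\bfm_h=\bm$), $E_h(\bfM_h)=\eeeh((M_h,0))-o(\tfrac{1}{|\log\eps|})$ in regime \eqref{regime2}, and $\eeeh((M_h,0))=\tfrac1{|\log\eps|}\bigl(\int_\Omega|\nabla M_h|^2+\tfrac1{2\pi\eps}\int_{\partial\Omega}(M_h\cdot\nu)^2\bigr)$ because the potential term vanishes. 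So everything reduces to constructing a good competitor for the functional $E_{\eps,\eta}$ of \cite{IK_jac} restricted to $\Ss^1$-valued maps, which is essentially the boundary Ginzburg-Landau functional studied there. For this I would invoke the upper bound construction from \cite{IK_jac}: near each $a_j$ one glues a rescaled (by the core length $\eps$) optimal boundary-vortex profile of degree $d_j/2$, carrying energy $\pi|d_j|\log\tfrac1\eps + \gamma_0|d_j| + o(1)$ per vortex (the constant $\gamma_0=\pi\log\tfrac{e}{4\pi}$ being the core energy computed there), and away from the vortices one uses the canonical harmonic map $m_*=e^{\ci\phi_*}$ of Definition \ref{defi:can_map}, whose Dirichlet energy outside $\bigcup B_\rho(a_j)$ is $N\pi\log\tfrac1\rho + W_\Omega(\{(a_j,d_j)\})+o_\rho(1)$ by Definition \ref{defi:renen}. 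Matching the two on the annuli $B_{2\rho}(a_j)\setminus B_\rho(a_j)$ and the transition scales $\eps\ll\rho\ll1$ gives $\int_\Omega|\nabla M_h|^2 + \tfrac1{2\pi\eps}\int_{\partial\Omega}(M_h\cdot\nu)^2 = \pi\sum|d_j|\,|\log\eps| + W_\Omega(\{(a_j,d_j)\}) + N\gamma_0 + o(1)$ when all $|d_j|=1$.

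Concretely I would carry out the steps as follows. First, fix the canonical harmonic map $m_*$ associated to $\{(a_j,d_j)\}$ via Theorem \ref{pro:canm}, and record that $|\nabla\phi_*|\sim \operatorname{dist}(\cdot,\{a_j\})^{-1}$ near the boundary vortices, so that $\int_{\Omega\setminus\bigcup B_\rho(a_j)}|\nabla\phi_*|^2 = N\pi\log\tfrac1\rho + W_\Omega + o_\rho(1)$. Second, in each boundary half-disk $B_\rho(a_j)\cap\Omega$ insert the optimal $\eps$-scale profile from \cite{IK_jac} that interpolates between the vortex core and the boundary trace of $m_*$ on $\partial B_\rho(a_j)\cap\Omega$; for $|d_j|=1$ this single-degree profile has the precise cost $\pi\log\tfrac\rho\eps + \gamma_0 + o(1)$ including the full boundary penalty $\tfrac1{2\pi\eps}\int(M_h\cdot\nu)^2$, which is where $\gamma_0$ enters. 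Third, choose $\rho=\rho(h)\to0$ slowly enough that all the $o_\rho(1)$ and transition errors are $o(1)$; this uses regime \eqref{regime2} in the form \eqref{regimelogs}–\eqref{regime2} to guarantee that the reduction error $o(\tfrac1{|\log\eps|})$ from Theorem \ref{thm:lowerb} and the interpolation errors are genuinely negligible after multiplication by $|\log\eps|$. Fourth, verify the convergence statements in (ii): $\bar\bfM_h=(M_h,0)\to(m_*,0)$ in $L^p(\Omega)$ and $L^p(\partial\Omega)$ because $M_h=m_*$ outside shrinking neighborhoods of the $a_j$ and $|M_h|\le1$; the Jacobian convergence $\jaco(M_h(\cdot,x_3))\to J$ in the sense of \eqref{conv_lip} follows from \eqref{jac_s1} applied to $m_*$ together with the explicit local degree of the inserted profiles, exactly as in \cite{IK_jac}.

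The main obstacle is the precise matching of the boundary-vortex core energy, i.e. producing the exact constant $\gamma_0$ rather than merely $\pi|d_j|\,|\log\eps| + O(1)$. This requires using the optimal profile for the boundary functional (a half-plane model problem whose optimal energy was identified in \cite{IK_jac}) and carefully tracking the boundary penalty term $\tfrac1{2\pi\eps}\int_{\partial\Omega}(M_h\cdot\nu)^2$ through the conformal change of variables $\Phi:\overline{B_1}\to\overline\Omega$, since the curvature $\varkappa$ and the factor $|\Phi'|$ enter both the renormalized energy (as in Theorem \ref{pro:ree}) and the local core correction; one must check that the curvature contributions are already fully accounted for inside $W_\Omega$ and do not produce an extra additive term in $\gamma_0$. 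A secondary technical point is ensuring $\bfM_h\in H^1$ (so that the trace and the stray-field potential are well-defined) while the limit is only $W^{1,q}$, $q<2$ — but this is fine since for each fixed $h>0$ the competitor is smooth away from finitely many boundary points and the logarithmic profile is in $H^1$ on the bounded domain. Once the core constant is pinned down, combining it with $W_\Omega$ via Definition \ref{defi:renen} and the reduction of Theorem \ref{thm:lowerb} yields (iii), and the upper bound is complete, matching the lower bound of Theorem \ref{thm:t1}(iii).
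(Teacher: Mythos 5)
Your proposal is correct and follows essentially the same route as the paper: take the $\Ss^1$-valued $2D$ recovery sequence of the companion paper (part vii) of Theorem \ref{thm:gammacforee}), extend it $x_3$-independently so the potential term vanishes and $\eeeh=\frac1{|\log\eps|}\eee$, and transfer to $E_h$ via the $x_3$-independent case of Theorem \ref{thm:lowerb} (with the $o(1)$ estimate in regime \eqref{regime} for (ii) and the $o(\frac1{|\log\eps|})$ estimate in regime \eqref{regime2} for (iii)). The only difference is that you re-sketch the core/annulus matching inside the $2D$ construction, which the paper simply invokes as a black box from \cite{IK_jac}.
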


\bigskip

The results of Theorems \ref{thm:t1} and \ref{thm:t1UB} generalize those we obtained \cite{IK_jac} for the reduced energy $\eeeh$ (restated as Theorem~\ref{thm:gammacforee} below), and 
in fact our proof uses Theorem~\ref{thm:lowerb} to connect these results.

\bigskip

By standard properties of $\Gamma$-convergence, we can immediately deduce 
\begin{cor}\label{cor:min} 
 If $\Omega\subset \R^2$ is a bounded, simply connected, $C^{1,1}$ regular domain, $h\to 0$, $\eta=\eta(h)\to 0$ and $\eps=\eps(h)\to 0$ satisfy the regime \eqref{regime}
and $\bfm_h$ are minimizers of $E_h$ as defined in \eqref{eee}, then the following holds: 
There exists a subsequence $h\to 0$ such that the global Jacobians $\jaco(\bar m_h)$ of the in-plane averages $\bar m_h$ satisfy
\[
\jaco(\bar m_h)\to 
J=-\ka \, \h^1\llcorner \dOm+\pi (\delta_{a^*_1}+\delta_{a^*_2})
\]
in the sense of \eqref{conv_lip}, for $a^*_1,a^*_2\in \dOm$ 
and the energy satisfies
\[
\lim_{h\to 0} E_h(\bfm_h) = 2\pi. 
\]
If additionally the assumption $\frac{\log|\log h|}{|\log h|}\ll \eps$ is satisfied, then $a_1^*\neq a_2^*$, the pair $(a_1^*,a_2^*)$ minimizes\footnote{For the existence of minimizers of $W_\Omega$, recall Corollary~\ref{cor-ren-min}.} $W_\Om(\{(a_1,1),(a_2,1)\})$ over the set $\{(a_1,a_2)\in \partial\Omega\times\partial\Omega\, :\, a_1\neq a_2\}$ and
\[
\lim_{h\to 0} |\log\eps| (E_h(\bfm_h) - 2\pi) = W_\Om(\{(a_1^*,1),(a_2^*,1)\})+2\pi \log\frac{e}{4\pi};
\]
furthermore, the sequence $\tilde \bfm_h(x,x_3):\Bfom_1\to \Ss^2$ defined by 
 $\tilde \bfm_h(x,x_3)=\bfm_h(x,hx_3)$ converges strongly in every $L^p(\Bfom_1)$, $1\le p<\infty$,  to an $x_3$-invariant $W^{1,q}$-map  $\tilde\bfm=(m_*, 0)$ with $m_*$ is a canonical harmonic map associated to $\{(a_1^*,1), (a_2^*,1)\}$ in $\Omega$. 
\end{cor}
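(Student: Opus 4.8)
The plan is to deduce the corollary from the $\Gamma$-expansion results by the standard argument: the recovery sequence of Theorem~\ref{thm:t1UB} furnishes the matching upper bound for the minimal energy, Theorem~\ref{thm:t1} provides compactness of the global Jacobians together with the first- and second-order lower bounds, and Corollary~\ref{cor-ren-min} guarantees that $W_\Om$ actually attains its infimum over pairs of boundary vortices of multiplicity $1$. No new estimate is needed; the work is in the bookkeeping of which regime (\er{regime} or \er{regime2}) is used at each step and in identifying the limiting magnetization.

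\emph{First order.} Applying Theorem~\ref{thm:t1UB} with $N=2$ and $d_1=d_2=1$ to any two distinct points of $\dOm$ gives a sequence $\bfM_h$ with $E_h(\bfM_h)\to 2\pi$, so by minimality $\limsup_{h\to 0} E_h(\bfm_h)\le 2\pi<\infty$ and Theorem~\ref{thm:t1} applies to $\bfm_h$. Along a subsequence the global Jacobians $\jaco(\bar m_h)$ converge in the sense of \er{conv_lip} to $J=-\ka\,\h^1\llcorner\dOm+\pi\sum_{j=1}^N d_j\delta_{a_j}$ with $\sum_j d_j=2$, $d_j\in\ZZ\setminus\{0\}$, and $\liminf_{h\to 0}E_h(\bfm_h)\ge\pi\sum_j|d_j|$. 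Since $\sum_j d_j=2$ forces $\sum_j|d_j|\ge 2$, we get $\lim_{h\to 0}E_h(\bfm_h)=2\pi$ and $\sum_j|d_j|=2$; the only integer configurations with $d_j\neq 0$, $\sum_j d_j=2$ and $\sum_j|d_j|=2$ are $N=1$ with $d_1=2$, or $N=2$ with $d_1=d_2=1$. Writing $a_1^*=a_2^*$ in the first case, in both cases $J=-\ka\,\h^1\llcorner\dOm+\pi(\delta_{a_1^*}+\delta_{a_2^*})$.

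\emph{Second order (narrower regime).} Assume in addition $\frac{\log|\log h|}{|\log h|}\ll\eps$. Fixing distinct $b_1,b_2\in\dOm$ and using part (iii) of Theorem~\ref{thm:t1UB} together with the minimality $E_h(\bfm_h)\le E_h(\bfM_h)$ yields $\limsup_{h\to 0}|\log\eps|\,(E_h(\bfm_h)-2\pi)\le W_\Om(\{(b_1,1),(b_2,1)\})+2\gamma_0<\infty$. Hence the hypotheses of Theorem~\ref{thm:t1}(iii) hold, forcing $d_j=\pm 1$; this rules out the case $N=1$, so $N=2$, $d_1=d_2=1$, $a_1^*\neq a_2^*$, and moreover $\liminf_{h\to 0}|\log\eps|\,(E_h(\bfm_h)-2\pi)\ge W_\Om(\{(a_1^*,1),(a_2^*,1)\})+2\gamma_0$. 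Since the upper bound holds for \emph{every} admissible pair $b_1\neq b_2$, taking the infimum and invoking Corollary~\ref{cor-ren-min} gives
\[
W_\Om(\{(a_1^*,1),(a_2^*,1)\})\le\min_{b_1\neq b_2}W_\Om(\{(b_1,1),(b_2,1)\})\le W_\Om(\{(a_1^*,1),(a_2^*,1)\}),
\]
so $(a_1^*,a_2^*)$ minimizes $W_\Om$, all intermediate inequalities are equalities, and $|\log\eps|\,(E_h(\bfm_h)-2\pi)\to W_\Om(\{(a_1^*,1),(a_2^*,1)\})+2\pi\log\frac{e}{4\pi}$ using $\gamma_0=\pi\log\frac{e}{4\pi}$.

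\emph{Strong compactness.} The hypotheses of Theorem~\ref{thm:t1}(iv) coincide with those just verified, so $\tilde\bfm_h(x,x_3)=\bfm_h(x,hx_3)$ converges strongly in every $L^p(\Bfom_1)$ to an $x_3$-invariant map $\tilde\bfm=(\tilde m,0)$ with $\tilde m\in W^{1,q}(\Om;\Ss^1)$ and $\jaco(\tilde m)=J$. It remains to identify $\tilde m$ with a canonical harmonic map associated to $\{(a_1^*,1),(a_2^*,1)\}$: this is the one point that is not purely formal, and I would obtain it from the rigidity of equality in the lower bound of Theorem~\ref{thm:t1}(iii) — the limiting renormalized Dirichlet energy is, by Definition~\ref{defi:renen} and \er{eq:renW}, attained among $\Ss^1$-valued competitors with the prescribed boundary vortices and boundary tangency exactly by the two canonical harmonic maps $\pm m_*$, as in \cite{IK_jac}. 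The main obstacle is precisely this last identification, since $\tilde m$ is only $W^{1,q}$ with $q<2$ (not $H^1$ away from the vortices), so the argument must go through the same truncation/lower-bound mechanism used to prove Theorem~\ref{thm:t1}(iii)–(iv) rather than a direct lower-semicontinuity of the Dirichlet energy.
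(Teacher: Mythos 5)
Your proposal is correct and is essentially the paper's own argument: the paper deduces Corollary~\ref{cor:min} ``immediately'' from Theorems~\ref{thm:t1} and~\ref{thm:t1UB} together with Corollary~\ref{cor-ren-min} by exactly the standard $\Gamma$-convergence bookkeeping you carry out (upper bound via the recovery sequence at any pair of distinct boundary points, minimality, compactness plus the first- and second-order lower bounds, and excluding the $N=1$, $d_1=2$ case in the narrower regime since Theorem~\ref{thm:t1}(iii) forces $d_j=\pm1$). The one step you flag -- identifying the $L^p$-limit of the minimizers as a canonical harmonic map, which does not follow verbatim from the compactness statement of Theorem~\ref{thm:gammacforee}(vi) as restated here -- is likewise not detailed in the paper and rests on the corresponding refined convergence result for (almost-)minimizers in \cite{IK_jac}, so your rigidity-of-equality sketch is consistent with the intended route.
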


\begin{rem} 
In the unit disk $\Om=B_1$, by Theorem~\ref{pro:ree} and Corollary \ref{cor-ren-min}, the renormalized energy for two boundary vortices of multiplicities $1$ has the form  
 $W_{B_1}(\{(a_1,1),(a_2,1)\})=-2\pi \log|a_1-a_2|$ and any minimal configuration is given by two diametrically opposed points $a_1^*$ and $a_2^*$ on $\partial B_1$ yielding
 $W_{B_1}(\{(a_1^*,1),(a_2^*,1)\})=-2\pi \log 2$. Together with  Corollary~\ref{cor:min}, we obtain 
 \[
 \lim_{h\to 0} |\log\eps| (E_h(\bfm_h) - 2\pi) = -2\pi \log 2+2\pi \log\frac{e}{4\pi}=2\pi \log\frac{e}{8\pi}.
 \]
 \end{rem}

\begin{rem}
Theorem~\ref{thm:t1} (iii) and Corollary \ref{cor:min} suggest that for minimizers $\bfm_h$ of $E_h$, no higher degree transitions can occur,
as the limit only shows singularities of multiplicity $1$.
This is similar to results for interior Ginzburg-Landau vortices \cite{BethuelBrezisHelein:1994a}. Generalizing from minimizers to
 critical points, the situation appears fundamentally different
between boundary and interior vortices, as can be seen from the (blow up) results in the whole plane or the half plane: 
For Ginzburg-Landau vortices, (unstable) solutions of higher degrees were shown to exist by Herv\'e--Herv\'e~\cite{HerveHerve:1994a} and Chen--Elliott--Qi \cite{ChenElliottQi:1994a}. In the 
{boundary vortex} case, solutions on a half plane can only have a single transition by results of Toland~\cite{Toland:1997a} and Cabr\'e--Sol\`a-Morales~\cite{CabreSola-Mor:2005a}. 
By recent results of Baffetti at al. \cite{BEK19} for critical points of an $\Ss^1$-valued model of boundary vortices on a bounded domain, 
it is impossible for these transitions to cluster at distances that are $\gg \eps$, but $\ll 1$, so the limit can only have singularities of multiplicity $1$.
\end{rem}

\begin{rem}\label{rem:multiconn}
We have required that the domain is simply connected. The results are \textbf{false} for doubly connected domains, but 
analogous results to ours  can be expected to hold for domains of higher connectivity. Doubly connected domains (like an annulus) are special because they support continuous vector fields that are tangential to the boundary \textbf{everywhere}, and there are even examples of smooth magnetizations on such domains for which the stray field
vanishes,
as noted in the physics literature by Arrott et al. \cite{Arrott}.
\end{rem}

\subsection*{Outline of the paper} The rest of the paper is organized as follows: In the next section, we
describe a panorama of thin film regimes and the main features of interior and boundary vortices in micromagnetics. 
In Section~\ref{sec:reduc}, we reduce
 the nonlocal $3D$ energy $E_h$ energy to
the simplified local functional $\eeeh$ in \eqref{eq:defofebar}
by showing that these energies are close to each other 
up to $o(\frac1{|\log\eps|})$ (or $o(1)$ for the
highest order of $\Gamma$-development), see Theorem~\ref{thm:lowerb}.
This is done by a careful series of estimates that refine
results of Gioia-James
\cite{GioiaJames:1997a}, Carbou \cite{Carbou:2001a} and Kohn-Slastikov \cite{KohnSlastiko:2005a} in a more quantitative way. In Section \ref{sec:sec3}, we prove Theorem~\ref{thm:t1} using 
the analysis of the simplified local functional $\eeeh$ in \eqref{eq:defofebar} from our companion article \cite{IK_jac}. In Section \ref{sec:ren}, we prove the properties of the canonical harmonic maps and the renormalized energy stated in Theorems \ref{pro:canm} and \ref{pro:ree} as well as Corollary \ref{cor-ren-min}. We end with an appendix proving the characterization of the stray field in \eqref{Helmholtz}.

\subsection*{Acknowledgments} The research presented in this article has been supported by ANR project MAToS, no. ANR-14-CE25-0009-01 
and DFG SFB 611.

\section{Related models. Micromagnetic vortices} 
\label{physi}

Our choice of thin film regime \eqref{regime} is not the only one that leads to a thin film $\Gamma$-limit. In fact,
there is a whole range of possible limits, and we give a short panoramic overview here, see Figure~\ref{fig:regimes}. 
We always assume $h\ll 1$ which corresponds to thin film regimes. 
The most obvious thin film limit (of letting  $h\to 0$) corresponds to small magnetic samples where $\eta>0$ is fixed (i.e., $A\sim \ell$); this regime  has been studied
by Gioia-James~\cite{GioiaJames:1997a} (see also Kreisbeck \cite{Kreisbeck:2013aa} for an alternate approach). 
The resulting $\Gamma$-limit is somewhat degenerate in 
the sense that it is minimized by all constant in-plane magnetizations and does not depend on  the shape of the film.
Recently, Morini-Slastikov~\cite{Morini:2018aa} 
also studied small films with additional surface roughness and were able to derive a homogenized 
thin film limit, with constant minimizers determined by the geometry of the roughness.

The case of larger magnetic samples $\eta\ll 1$, i.e., the exchange length $A$ is small compared with the diameter $\ell$, is more important as it is physically achievable. There are different convergence rates of $\eta(h)\to 0$ as $h\to 0$ 
corresponding to samples of various size and leading to various regimes where different effects come into play. Three types of singular pattern of
the magnetization occur: N\'eel walls, interior and boundary micromagnetic vortices. The choice of the asymptotic regimes will correspond to the energy ordering of these three patterns (for more details, see \cite{DeSimoneKohnMuller:2005a}).
All of the regimes we study are separated only by logarithmic (or even doubly logarithmic) terms. For this reason, the sharp separation of regimes
found by $\Gamma$-convergence is more  prominent in the analysis than in physical or numerical experiments at finite sample sizes.
We  list some regimes and the related results by increasing sample size, corresponding to decreasing $\eta$. 

\begin{figure}[htbp]
  \begin{tikzpicture}[x=2mm,y=5mm]
\draw[thick, -Triangle] (5,0) -- (60,0) node[font=\small,below left=3pt and -8pt]{$\eta^2$};

\foreach \x in {1,...,5}
\draw (10*\x,-0.2) -- (10*\x,0.2);

\node[font=\small, text height=1.75ex,
text depth=.5ex] at (0,0.3) {$A$ small or};
\node[font=\small, text height=1.75ex,
text depth=.5ex] at (0,-0.3){$\ell$ large};
\node[font=\small, text height=1.75ex,
text depth=.5ex] at (65,0.3) {$A$ large or};
\node[font=\small,text height=1.75ex,
text depth=.5ex] at (65,-0.3){$\ell$ small};

\node[font=\small, text height=1.75ex,
text depth=.5ex] at (10,-0.7) {$\frac{h}{|\log h|}$};
\node[font=\small, text height=1.75ex,
text depth=.5ex] at (20,-0.7) {$\frac{h}{\log|\log h|}$};
\node[font=\small, text height=1.75ex,
text depth=.5ex] at (30,-0.7) {$h$};
\node[font=\small, text height=1.75ex,
text depth=.5ex] at (40,-0.7) {$h\log|\log h|$};
\node[font=\small, text height=1.75ex,
text depth=.5ex] at (50,-0.7) {$h|\log h|$};
     \mmregime{DKMO}{0}{1}{0}{10}{2}{lightgray}
    \mmregime{IKn}{0}{8}{1}{7}{3}{lightgray}
    \mmregime{IO}{1}{9}{2}{9}{3}{lightgray}
    \mmregime{Moser}{2}{6}{3}{4}{2}{lightgray}
        \mmregime{This paper: Th.\ref{thm:t1}(ii)}{3}{4}{4}{8}{2}{gray} 
        \mmregime{Th.\ref{thm:t1}(iii)}{4}{2}{4}{8}{3}{gray}
        \mmregime{KS}{4}{8}{5}{4}{2}{lightgray}
        \mmregime{KS/Carbou}{5}{4}{5}{12}{2}{lightgray}
        \mmregime{GJ}{5}{9}{5}{13}{3}{lightgray}
        \mmregime{K}{4}{8}{4}{11}{3}{lightgray}
 \end{tikzpicture}
\caption{An approximate panoramic view (not to scale) of thin film limits and their range of validity  (results of this article in dark grey, others in light grey: 
DKMO=DeSimone et al.~\cite{DesimoneKohnMuller:2002a}, IKn=Ignat-Kn\"upfer~\cite{Ignat:2010aa},
IO=Ignat-Otto~\cite{IgnatOtto:2011a}, Moser~\cite{Moser:2004a}, KS=Kohn-Slastikov~\cite{KohnSlastiko:2005a}, Carbou~\cite{Carbou:2001a},
GJ=Gioia-James~\cite{GioiaJames:1997a}).
The results of K=Kurzke \cite{Kurzke:2006a,Kurzke:2006b,Kurzke:2007a} can be interpreted as a limit at the left ``larger films'' end of the KS regime. }
\label{fig:regimes}
\end{figure}
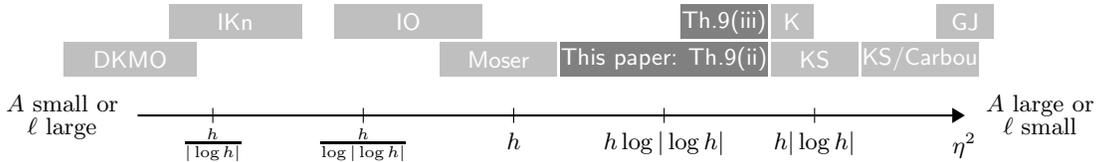

In the regime of relatively small films, characterized by 
$\eta^2\gg h{|\log h|}$, 
the exchange term in the energy
dominates completely and the magnetization becomes a constant unit-length
vector in the film plane.
A corresponding $\Gamma$-limit was derived by Kohn-Slastikov 
\cite{KohnSlastiko:2005a}, related to earlier work by Carbou
\cite{Carbou:2001a}. Their result is that
the nonlocal magnetostatic energy reduces to a local contribution
of the boundary $\frac1{2\pi}\int_{\partial\O} (m\cdot \nu)^2 d\h^1$, 
 which selects the preferred directions of the constant magnetization vector.

Slightly larger films, where 
$\eta^2=\alpha {h}{|\log h|}$
with $0<\alpha<\infty$, 
were
also studied by Kohn-Slastikov. Here, the limiting magnetizations
are still required to lie in the film plane, but no longer need to be
constant. Instead, the exchange energy and the boundary contribution
compete, and the rescaled energy $\Gamma$-converges to
\[
E^\alpha_{KS}(m)=  \alpha\int_\O |\nabla m|^2dx + \frac1{2\pi }\int_{\partial\O}(m\cdot \nu)^2 d\h^1, \quad m\in H^1(\O;\Ss^1).
\]
A second limit, describing the behavior of 
$\frac1\alpha E^\alpha_{KS}$ when $\alpha\to 0$, 
was examined by Kurzke \cite{Kurzke:2006a,Kurzke:2006b,Kurzke:2007a}. 
There is no 
$m\in H^1(\O;\Ss^1)$ that satisfies $m\cdot \nu=0$ on $\partial\O$ 
if $\O$ is a simply connected domain. For this reason, the boundary term cannot be made zero, and 
 for small $\alpha$ we obtain the emergence of boundary vortices, 
where the magnetization quickly rotates from $m\approx \tau$ to $m\approx -\tau$ over a boundary segment of length $O(\alpha)$
(see Section \ref{sec:topo} for further details).
The highest order term in the energy expansion relates to the number of boundary vortices, 
while their
 interaction is
  governed by a
renormalized energy appearing as the next order term in the energy expansion as $\alpha\to 0$. However,
the significance of these results in the context of the full micromagnetic energy remained unclear for a long time; 
 the main purpose of the present paper is to clarify this.

Our Theorem \ref{thm:t1} in the present article 
directly relates to the micromagnetic energy and shows that the double limit procedure of Kohn-Slastikov and Kurzke
yields  the correct result for the highest order of the energy and its concentration at boundary vortices  if we are in the regime $h\ll \eta^2 \ll h|\log h|$. 
In the narrower regime $h\log|\log h|\ll \eta^2 \ll h |\log h|$, we obtain the same renormalized energy as Kurzke.

The next regime,  $\eta^2=O(h)$, was studied by 
Moser \cite{Moser:2003a,Moser:2004a,Moser:2005a}. Here, both the magnetostatic and 
exchange terms survive in the limit, and again, an incompatibility
produces boundary vortices. To highest order, the energy is proportional to the number of vortices.
 The corresponding boundary vortex interaction is 
nonlocal here, in contrast to the local renormalized energy of the present article. 
For a review of these models, we refer to Kurzke-Melcher-Moser \cite{KurzkeMelcherMoser:2006a}.

In large thin films corresponding to the regime $\eta^2\ll h$, N\'eel walls nucleate in the magnetic sample. The (symmetric) N\'eel wall is a transition layer describing a one-dimensional in-plane rotation connecting two (opposite) directions of the magnetization. It is a two-length scale object with a core of size of order $\frac{\eta^2}{h}$ and two logarithmically decaying tails (see \cite{Mel1, Mel2, DKMO-Neel, IM2016}). Various aspects of N\'eel walls (existence of topological N\'eel walls with prescribed winding number, interaction between N\'eel walls, cross-over from symmetric to asymmetric N\'eel walls etc.)  have been analyzed recently
(see e.g. \cite{DIO, DI-asym, IgGamma, IM2016, IM2017, IM2019}). 

We now describe briefly three sub-regimes  for the limit $\eta^2\ll h$ where the nonlocality of the reduced energy is carried by the $H^{-1/2}$ norm of the volume charges $\nabla \cdot m$ inside $\Om$ yielding the highest order energy of N\'eel walls (for more details, see section 7.2. in \cite{Ignat_HDR}).
The sub-regime $\eta^2 |\log \frac{\eta^2}{h}| \gg \frac{h}{\log|\log h|}$ yielding
$$\frac{h}{\log|\log h| \cdot \log \log |\log h|}\ll \eta^2 \ll h$$
was studied by Ignat-Otto \cite{IgnatOtto:2011a}: next to the nonlocal term, the reduced energy penalizes the out-of-plane component $m_3$. The constraint $m\cdot \nu=0$
is imposed (so no $\Ss^1$-valued boundary vortices nucleate in that model); thus, the Landau state is composed by N\'eel walls and topological point singularities where $m_3=\pm 1$ nucleating either in the interior or at boundary of $\Om$. 

Ignat-Kn\"upfer \cite{Ignat:2010aa} studied a further regime of slightly larger  films, characterized by 
$
\frac{h}{|\log h|} \ll \eta^2 |\log \frac{\eta^2}{h}| \ll \frac{h}{\log|\log h|},
$ yielding $$\frac{h}{|\log h| \cdot \log |\log h|}\ll \eta^2 \ll \frac{h}{\log|\log h| \cdot \log \log |\log h|}.$$
The model is described by $\Ss^1$-valued magnetizations, so the system nucleates N\'eel walls and boundary vortices. It is proved in  \cite{Ignat:2010aa} that the Landau state in circular thin film is given by a peculiar vortex structure, driven by a topological N\'eel wall of winding number $1$ accompanied by a pair of micromagnetic boundary vortices (so the S-state is less favorable in that model).

In very large films, characterized by  $\eta^2\ll \frac{h}{|\log h|}$,
the contribution of the exchange energy disappears completely, and one 
obtains a purely magnetostatic model driven by $\| \nabla \cdot m\|^2_{H^{-1/2}}$ where the constraint $|m|=1$ 
relaxes to $|m|\le 1$, see DeSimone et al. \cite{DesimoneKohnMuller:2002a,DeSimoneKohnMuller:2005a}.

\medskip

\subsection{Topological point defects}
\label{sec:topo}
We further present some heuristics on interior and boundary vortices that shows the importance of the global Jacobian. More details
can be found in \cite{IK_jac}.

 \medskip

 {\nd \bf Interior vortex}. The prototype of an interior vortex is given by a map
$$\bfm=(m, m_3):B_1\to \Ss^2$$ defined in a circular cross-section $\Omega=B_1$ of
 a thin film that minimizes the reduced energy $\eeeh$ defined in \eqref{eq:defofebar} under the boundary condition 
 \be
 \label{BC_tang}
 m=\tau\quad \textrm{ on }\quad \partial B_1,\ee where $\tau(x)=(-x_2, x_1)$ is a tangent vector at $x\in \partial B_1$. (Recall that $B_1$ is the unit disk in $\R^2$.) Thus, the magnetization turns in-plane at the boundary, in particular, $m$ carries a topological degree $\degr(m, \partial \Omega)=1$. As for the Ginzburg-Landau energy, a localized region is created at the center that is the core of the vortex of size $\eta$. The difference consists in the polarity carried by micromagnetic interior vortex according to the value $m_3=\pm 1$ at the vortex point given by the topologic zero of $m$ where the magnetization $\bfm$ becomes perpendicular to the horizontal plane (see Figure
 \ref{Blochli}). 
 \begin{figure}[htbp]
  \center  
  \includegraphics[scale=0.5,width=0.5\textwidth]{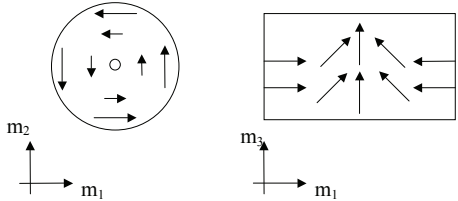}\\
  \caption{Micromagnetic interior vortex of winding number one and polarity one.} \label{Blochli}
\end{figure}
Note that the reduced energy $\eeeh$  (renormalized by $|\log \eps|$) controls the Ginzburg-Landau energy defined 
in \eqref{ginlan}, i.e.,
$$|\log \eps| \eeeh(\bfm)= \ \int_{\Omega} |\nabla \bfm|^2 \ dx + \frac{1}{\eta^2}\int_{\Om} (1-|m|^2) \ dx \geq \int_{\Omega} \den(m)\, dx$$
since
$|\nabla m|^2\leq |\nabla (m, m_3)|^2$ and $(1-|m|^2)^2=m_3^4\leq m_3^2=1-|m|^2$. 
Thus, the analogy with the theory of Ginzburg-Landau vortices (see \cite{BethuelBrezisHelein:1994a} and the review paper \cite[Section 7]{IgX}) yields:
$$
 \min_{\eqref{BC_tang}} \eeeh(\bfm)=\frac{2\pi|\log \eta|}{|\log \eps|}+O(\frac1{|\log \eps|})\quad \textrm{as}\quad \eps, \eta\to 0.
$$
As $m=\tau$ at the boundary, the boundary Jacobian of $m$ (defined in \eqref{def_jac_bd}) is carried by the curvature $\ka=1$ on  $\partial \Omega$ {(without any singular part)}, while the interior Jacobian of $m$ asymptotically concentrates on a Dirac measure ${\bf \delta}_0$ at the origin (up to a multiplicative constant); summing up, the global Jacobian of $m$ is given by
$$\jaco(m)=2\pi {\bf \delta}_0-\ka \, \h^1 \llcorner \partial \Om+o(1) \quad \textrm{as}\quad \eps, \eta\to 0.$$

\medskip

 {\nd \bf Boundary vortex.} The typical situation is given by an $\Ss^1$-valued map $m$ that minimizes the reduced energy in the unit disk $\Omega=B_1$ (i.e., $m_3=0$ in $B_1$):
 $$\eeeh(m)=\frac{1}{|\log \eps|} \bigg( \ \int_{\Omega} |\nabla m|^2 \ dx +\frac{1}{2\pi \eps} \int_{\partial \Om} (m\cdot \nu)^2\, d\h^1\bigg)  \quad \textrm{with } \, m:\Om\to \Ss^1,$$
 where $\nu=x$ on $\partial B^2$.  This problem has been analyzed by Kurzke \cite{Kurzke:2006a, Kurzke:2006b} and Moser \cite{Moser:2003a}: any minimizer $m$ is an harmonic map of unit length
driven by a pair of boundary vortices $P_1$ and $P_2$ that {are expected to be}  diametrically opposed of degree $1/2$ (see Figure \ref{fig:two_vort}). 
A boundary vortex of degree $1/2$ corresponds to an in-plane
transition of the magnetization $m$ along the boundary from $-\tau$ to $\tau=\nu^\perp$, i.e., the lifting of $m$ has an asymptotically jump of $-\pi$ (see Figure \ref{fig-bdrvortex}). 
The transition is regularized due to the exchange energy and the core of the boundary vortex has size $\eps$. The cost of such a transition is given by 
$$\frac 1 2 \eeeh(m)=\pi+O(\frac{1}{|\log \eps|}).$$ As $m$ is smooth of unit-length in $\Omega$, the interior Jacobian of $m$ vanishes so that the global Jacobian of $m$ is concentrated at the boundary: it is asymptotically given by a measure of regular part carried by the negative of the curvature, $-\ka=-1$ on $\partial \Omega$ and of singular part carried by two Dirac measures at $P_1$ and $P_2$:
$$\jaco(m)=\jacbd(m)=2\pi \bigg(\frac12\mathbb{\delta}_{P_1}+ \frac12\mathbb{\delta}_{P_2}\bigg)-\ka\, \h^1 \llcorner \partial \Om+o(1) \quad \textrm{as}\quad \eps, \eta\to 0.$$
Comparing with the interior vortex case, we see a justification for calling these ``half-degree'' vortices. However, for notational convenience we 
have written 
$\pi d_j \delta_{a_j}$ with $d_j\in\ZZ$ instead of $2\pi d_j \delta_{a_j}$ with $d_j \in \frac12\ZZ$ in the remainder of this paper.

  \begin{figure}
\center
  \includegraphics[scale=0.2,width=0.2\textwidth]{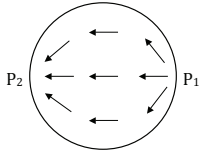}
 \caption{Two boundary vortices $P_1$ and $P_2$ of degree $1/2$. }
 \label{fig:two_vort}
      \end{figure}

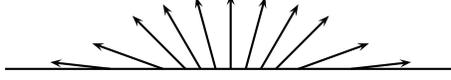
\begin{figure}[ht] %
 \center
  \begin{pspicture}(-3,0)(3,1) %
    \psset{unit=1cm}
    \psline(-3,0)(3,0)
    \psline{->}(0,0)(0, 1)
    \psline{->}(0.2,0)(0.46, 0.96) 
    \psline{->}(0.4,0)(0.9, 0.86)     
    \psline{->}(0.6,0)(1.307, 0.707) 
    \psline{->}(0.9,0)(1.84, 0.34) 
    \psline{->}(1.6,0)(2.4, 0.09) 
    \psline{->}(-0.2,0)(-0.46, 0.96) 
    \psline{->}(-0.4,0)(-0.9, 0.86)     
    \psline{->}(-0.6,0)(-1.307, 0.707) 
    \psline{->}(-0.9,0)(-1.84, 0.34) 
    \psline{->}(-1.6,0)(-2.4, 0.09) 
    \end{pspicture}
  \caption{Boundary vortex of degree $+1/2$
   in the upper half-plane $\R\times (0, \infty)$ within the frame $(\nu=-e_2, \tau=e_1)$. }
  \label{fig-bdrvortex}
\end{figure}

\medskip 

In our regime \eqref{regime} and with the energy scaling \eqref{eee},
 an ``essential" interior vortex will asymptotically induce infinite energy (because $|\log \eps|\ll |\log \eta|$ as $\eps, \eta\to 0$) while boundary vortices carry only finite energy. Therefore, we expect that no essential interior vortex nucleates for magnetizations of uniformly bounded energy so that our $\Ss^2$-valued magnetizations can be approximated by $\Ss^1$-valued maps (this expectation can be made rigorous using the estimates of \cite{IKL20},  see \cite[Theorem 3.1]{IK_jac})
 and the global Jacobian asymptotically concentrates at the boundary according to \eqref{newlab}.

\section{Reduction to a local model. Proof of Theorem \ref{thm:lowerb}}
\label{sec:reduc}
In this section we relate the nonlocal $3D$ micromagnetic energy $E_h$ for the magnetization $\bfm_h$ in \eqref{eee} to the simpler local $2D$ energy functional $\eeeh$ for the average $\bm$ in \eqref{eq:defofebar}.  The aim is to prove Theorem \ref{thm:lowerb}; for that, the key estimate is given in 
the following result. It is a more quantitative version of  estimates of Carbou \cite{Carbou:2001a} 
 and 
Kohn-Slastikov \cite{KohnSlastiko:2005a}.

\begin{lem}\label{lem:l1}  Let $\Bfom_h=\Omega\times (0,h)$ with $\Om\subset \R^2$ be a
simply connected
\footnote{This assumption is crucial: in fact, we use e.g. in the proof of 
Lemma \ref{lem:claim} that $\eeeh(\bm)\gtrsim 1$ as $h\to 0$ which may fail if the domain 
is topologically an annulus. Lower energy configurations are possible there, compare Remark~\ref{rem:multiconn}.}
 $C^{1,1}$ domain. In the regime \eqref{regime} of parameters $h, \eta(h), \eps(h)>0$, we consider a family of magnetizations 
 $\{\bfm_h:\Bfom_h\to \Ss^2\}_{h\to 0}$ and stray field potentials $\{U_h\}_{h\to 0}$ given by \eqref{strayfield} of 
uniformly bounded energy $\limsup_{h\to 0} E_h(\bfm_h)<\infty$. Then
we have the estimates as $h\to 0$:\footnote{The implicit constants in the big $O$ notation in \eqref{eq:bmhbd} and \eqref{eq:eandbare} depend only on $\Omega$.}
\begin{equation}\label{eq:bmhbd}
0\leq \frac{1}{\eta^2 |\log \eps|}\bigg[\int_{\Om} (1-|\bmp|^2)\, dx -\int_{\Om} \bmt^2\, dx \bigg]= \sqrt{\frac{E_h(\bfm_h)}{ |\log \eps|}} O(\frac{h}{\eta^2})
\end{equation}
and
\begin{equation}\label{eq:eandbare}
E_h(\bfm_h)\ge \eeeh(\bm) -  \bigg(\eeeh(\bm)+
   \sqrt{\frac{E_h(\bfm_h)}{|\log \eps|}}\bigg) O(\lambda(h)),
\end{equation}
where $\bm=(\bmp, \bmt)$ is the $x_3$-average of $\bfm_h$, $\eeeh$ is the reduced energy defined in \eqref{eq:defofebar},
and 
\be
\label{ah}
\lambda(h)=\frac{h}{\eta^2}\big(\frac{\log \frac{\eta^2}{h}}{|\log \eps|} 
+ 1\big) = \frac{1}{\eps|\log\eps|}\frac{\log|\log h|}{|\log h|}\ll 1.
\ee
Note that $\lambda(h)\ll 1$ is a direct consequence of \eqref{regimelogs}.
\end{lem}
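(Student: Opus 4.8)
The plan is to compare the nonlocal stray-field energy of $\bfm_h$ with the local surrogate obtained from the average $\bm$, using the variational characterization of $U_h$ in \eqref{Helmholtz}. First I would decompose the stray field according to the two kinds of charges it sees: the volume charges $\nabla\cdot\bmp$ inside $\Bfom_h$ and the surface charges $\bmn$ on the lateral boundary $\partial\Omega\times(0,h)$, together with the top/bottom charges $\pm\bar m_{h,3}$. Testing \eqref{strayfield} against suitable comparison potentials (the thin-film ansatz of Gioia--James / Carbou / Kohn--Slastikov, i.e.\ a potential that is essentially $x_3$-independent in the film and decays away from it) gives a lower bound for $\frac{1}{\eta^2 h|\log\eps|}\int_{\RR^3}|\bfa U_h|^2$ of the form $\frac{1}{2\pi\eps|\log\eps|}\int_{\partial\Omega}\bmn^2\,d\h^1$ plus the contribution $\frac1{\eta^2|\log\eps|}\int_\Omega \bar m_{h,3}^2\,dx$ coming from the surface charges on the faces, minus error terms that are quantitatively controlled by $h/\eta^2$ times a logarithmic factor. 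The inequality \eqref{eq:bmhbd} is the easy half: since $|\bfm_h|=1$ pointwise, Jensen/Cauchy--Schwarz on the $x_3$-average gives $|\bmp|^2+\bar m_{h,3}^2\le 1$ with the deficit $1-|\bmp|^2-\bar m_{h,3}^2$ controlled by the oscillation of $\bfm_h$ in $x_3$, i.e.\ by $\int|\partial_{x_3}\bfm_h|^2$, and this is absorbed into $\sqrt{E_h/|\log\eps|}\cdot O(h/\eta^2)$ by a Poincar\'e inequality in the thin direction (the width $h$ provides the small factor).

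The heart of \eqref{eq:eandbare} is then to show that the in-plane exchange term of $\bfm_h$ controls $\frac1{|\log\eps|}\int_\Omega|\nabla\bm|^2$ up to the same type of error, and that the full magnetostatic energy dominates $\frac1{\eta^2|\log\eps|}\int_\Omega(1-|\bmp|^2)\,dx+\frac1{2\pi\eps|\log\eps|}\int_{\partial\Omega}\bmn^2\,d\h^1$. For the exchange part this is immediate: $\int_{\Bfom_h}|\bfa\bfm_h|^2\ge \int_{\Bfom_h}|\nabla\bfm_h|^2\ge h\int_\Omega|\nabla\bm|^2$ by Jensen again. For the magnetostatic part I would combine the surface-charge lower bound on the faces (controlling $\frac1{\eta^2}\int_\Omega\bar m_{h,3}^2$) with \eqref{eq:bmhbd} to replace $\int_\Omega\bar m_{h,3}^2$ by $\int_\Omega(1-|\bmp|^2)$ at the cost of the stated error, and use the lateral-charge lower bound to produce the boundary term. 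The bookkeeping must be arranged so that every error is of the form (local energy $+\sqrt{E_h/|\log\eps|}$) times $O(\lambda(h))$, which forces one to track the precise $\log\frac{\eta^2}{h}$ factor appearing when one estimates the logarithmic kernel of the thin-film stray field against the comparison potential; the algebraic identities $|\log h|\sim|\log\eta|$ and the definition $\eps=\eta^2/(h|\log h|)$ are what turn $\frac{h}{\eta^2}\log\frac{\eta^2}{h}$ into $\frac1{\eps|\log\eps|}\cdot\frac{\log|\log h|}{|\log h|}$, giving \eqref{ah}.

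The main obstacle I anticipate is getting the stray-field lower bound \emph{sharp enough}: one needs not just an $O(1)$ comparison but the exact constants $\frac1{2\pi\eps}$ in front of the boundary term and $\frac1{\eta^2}$ in front of $\bar m_{h,3}^2$, with \emph{no} loss, since these terms survive at leading order in the $\Gamma$-expansion. This requires choosing the comparison potential with some care — essentially the exact thin-film kernel rather than a crude cutoff — and carefully separating the lateral surface charges (which live at scale $\eps$ near $\partial\Omega$ and produce the $1/\eps$ weight) from the face charges (which produce the $1/\eta^2$ weight through the thickness). A secondary technical point is that the constant in $O(\lambda(h))$ must depend only on $\Omega$ (through its $C^{1,1}$ norm and a tubular-neighborhood size), so the localization near $\partial\Omega$ has to be done with a fixed collar whose geometry is controlled purely by $\Omega$; the uniform bound $\limsup E_h(\bfm_h)<\infty$ is used only to make $\sqrt{E_h/|\log\eps|}$ a legitimate (bounded) prefactor and to know a posteriori that $\eeeh(\bm)$ is finite.
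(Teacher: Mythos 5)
Your treatment of \eqref{eq:bmhbd} is essentially the paper's: Poincar\'e in the thin direction plus $|\bfm_h|=1$ controls $\int_\Om(1-|\bm|^2)\,dx$ by $h\,(|\log\eps|E_h(\bfm_h))^{1/2}$, and $1-|\bm|^2=(1-|\bmp|^2)-\bmt^2$. The problem is with the heart of \eqref{eq:eandbare}. You propose to lower-bound the stray-field energy of $\bfm_h$ directly by testing \eqref{Helmholtz} against a ``thin-film'' comparison potential, and you yourself flag that the sharp constants $\frac1{2\pi\eps}$ and $\frac1{\eta^2}$ with error $O(\lambda(h))$ are the main obstacle --- but that obstacle is the entire content of the lemma, and your proposal contains no construction of the comparison potential and no estimate that produces the factors $\log\frac{\eta^2}{h}$ and $|\log h|$ in \eqref{ah}. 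A duality bound $\int|\bfa U_h|^2\ge 2\int_{\Bfom_h}\bfm_h\cdot\bfa\zeta-\int|\bfa\zeta|^2$ only becomes sharp when $\zeta$ is essentially the exact potential generated by the averaged charges, and verifying sharpness then requires computing $\int|\bfa\zeta|^2$, i.e.\ exactly the kernel computation the paper performs: replace $U_h$ by the potential $\bar U_h$ of the average $\bm$ (error $\sqrt{E_h/|\log\eps|}\,O(h/\eta^2)$ via $\|\bfm_h-\bm\|_{L^2}\lesssim h\|\partial_{x_3}\bfm_h\|_{L^2}$), then expand $\int|\bfa\bar U_h|^2$ through the Newtonian kernel (Proposition \ref{pro:append}) into volume--volume, volume--surface and surface--surface interactions. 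The volume-charge and cross terms (the terms ${\cal A},{\cal B}$, handled by generalized Young), the cancellations between top and bottom faces, the face--face kernel $\Gamma_h$, and above all the lateral--lateral kernel $K_h$ with the quantitative estimate of Lemma \ref{lem:estimG1} (where the constant $\frac1{2\pi\eps}$ arises from $\frac1{|\log h|}\int_{\partial\Om}K_h(x-y)\,dy\to 2$ and $\eps=\frac{\eta^2}{h|\log h|}$) are all absent from your sketch; ``separating the lateral charges from the face charges with the exact thin-film kernel'' names the task without doing it.

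There is also a concrete ingredient you explicitly disclaim that the stated estimate needs: you say the energy bound is used ``only'' to make $\sqrt{E_h/|\log\eps|}$ bounded and $\eeeh(\bm)$ finite. In fact, several error terms in the face--face estimate are purely additive (e.g.\ the collar contributions of size $\frac{h}{\eta^2|\log\eps|}$ and $\frac{h\log(\eta^2/h)}{\eta^2|\log\eps|}$, i.e.\ of size $\lambda(h)$ with no energy factor), and to cast them in the multiplicative form $\bigl(\eeeh(\bm)+\sqrt{E_h(\bfm_h)/|\log\eps|}\bigr)O(\lambda(h))$ of \eqref{eq:eandbare} the paper uses the uniform lower bound $\eeeh(\bm)\ge 2\pi-o(1)$ (Lemma \ref{lem:twopiboundforaven}), which rests on simple connectedness and the $\Gamma$-convergence results of the companion paper --- this is precisely what the footnote attached to the lemma warns about, and it would fail on an annulus. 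Without this (or an explicit extra additive $O(\lambda(h))$ term, which is not what the statement asserts), your bookkeeping cannot deliver \eqref{eq:eandbare} as written. So the proposal is a reasonable road map whose two decisive steps --- the sharp kernel estimates and the absorption of additive errors via $\eeeh(\bm)\gtrsim 1$ --- are missing.
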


\proof{} 
First, we prove the estimate \eqref{eq:bmhbd} for the distance of $\bm$ to the unit sphere $\Ss^2$. For that, denoting $\bfm_h=(m_{h,1}, m_{h,2}, m_{h,3})$ and recalling that $|\bm|\leq 1=|\bfm_h|$ in $\Bfom_h$, the Cauchy-Schwarz and Poincar\'e inequalities imply for $1\leq k\leq 3$:
\begin{align*}
\int_{\Bfom_h} |m_{h,k}^2(x,x_3)-\mk^2(x)|\, d\bfx
&\leq 2\int_\Om dx \int_0^h |m_{h,k}(x,x_3)-\mk(x)|\, dx_3\\
&\leq 2 \sqrt{h} \int_{\Om} \, dx \bigg( \int_0^h |m_{h,k}(x,x_3)-\mk(x)|^2\, dx_3\bigg)^{1/2}\\
&\leq C h^{3/2} \int_{\Om} \, dx \bigg( \int_0^h |\partial_{x_3} m_{h,k}(x,x_3)|^2\, dx_3\bigg)^{1/2}\\
&\leq  C h^{3/2} \bigg( \int_{\Bfom_h} |\partial_{x_3} m_{h,k}(\bfx)|^2\, d\bfx\bigg)^{1/2},
\end{align*}
for $C>0$ is a constant depending only on $\Omega$ that changes from line to line.
Since $|\bfm_h|=1$, summing for $k=1, 2, 3$, we deduce
\be
\label{estin12}
\int_{\Om} (1-|\bm|^2)\, dx \leq C h (|\log \eps| E_h(\bfm_h))^{1/2},
\ee
which leads to \eqref{eq:bmhbd} because $0\leq 1-|\bm|^2=(1-|\bmp|^2)-\bmt^2$.

For the second inequality, we start by noting that Jensen's inequality implies
$$\int_{\Omega} |\nabla \bm|^2 \ dx\leq \frac{1}{h} \int_{\Bfom_h} |\bfa \bfm_h|^2 \ d\bfx. $$
To estimate the stray field energy, we use the strategy of Kohn-Slastikov (see \cite{KohnSlastiko:2005a}, Lemma 3). We denote by $\bu\in H^1(\R^3)$ the stray field potential
associated to $\bm { \mathds 1}_{\Bfom_h}$ in \eqref{stray_mean}.
The definition of Helmholtz projection \eqref{Helmholtz} together with the Poincar\'e inequality lead to
\begin{align*}
& \int_{\RR^3} | \bfa U_h|^2 \ d\bfx\leq \int_{\Bfom_h}|\bfm_h|^2\, d\bfx\leq C h, \quad \int_{\RR^3} | \bfa \bu|^2 \ d\bfx
\leq \int_{\Bfom_h}|\bm|^2\, d\bfx\leq C h, \\
&\int_{\RR^3} | \bfa U_h-\bfa \bu|^2 \ d\bfx\leq \int_{\Bfom_h}|\bfm_h-\bm|^2\, d\bfx\leq C h^2 \int_{\Bfom_h} |\partial_{x_3}\bfm_h |^2\, d\bfx.
\end{align*}
Since $\big|\|a\|^2-\|b\|^2\big|\leq \big(2\|a-b\|^2(\|a\|^2+\|b\|^2)\big)^{1/2}$, we finally obtain:   
\begin{align*}\frac{1}{\eta^2 h |\log \eps|}\bigg|\int_{\RR^3} | \bfa
   U_h|^2 \ d\bfx-\int_{\RR^3} | \bfa
   \bu|^2 \ d\bfx \bigg|&\leq \frac{Ch}{\eta^2 |\log \eps|} \bigg(\frac{1}{h}\int_{\Bfom_h} |\partial_{x_3} \bfm_h|^2\, d\bfx\bigg)^{1/2}\\
   & \leq C\frac{h}{\eta^2} \sqrt{\frac{E_h(\bfm_h)}{|\log \eps|}} .\end{align*}
Next, we use Lemma~\ref{lem:claim} below to estimate the stray field energy generated by $\bu$. 
Then \eqref{eq:eandbare}
follows by using \eqref{eq:bmhbd}.
\qed

\medskip

The following result improves computations of  \cite{Carbou:2001a} and \cite{KohnSlastiko:2005a} 
 in our regime \eqref{regime}. 

\begin{lem}\label{lem:claim}
Under the assumptions in Lemma \ref{lem:l1}, if $\bu$ is the stray field potential associated to the $x_3$-average $\bm:\Omega\to \bar B^3$ in \eqref{stray_mean}, then we have for $h\to 0$:
\begin{equation}
\label{claim}
\frac{1}{|\log \eps|}\bigg|\frac{1}{\eta^2 h}\int_{\RR^3} | \bfa
   \bu|^2 \ d\bfx-\frac{1}{\eta^2}\int_{\Om} \bmt^2 \ dx-\frac{1}{2\pi \eps} \int_{\partial \Om} \bmn^2\, d\h^1\bigg| 
\leq C  \lambda(h) 
\eeeh(\bm)
   \end{equation}
where $ \lambda(h)>0$ is given in \eqref{ah} and $C>0$ is a constant depending on $\Omega$. 
\end{lem}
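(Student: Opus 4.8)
The plan is to compute the stray field energy $\int_{\R^3}|\bfa\bu|^2\,d\bfx$ generated by the $x_3$-average $\bm\mathds{1}_{\Bfom_h}$ asymptotically as $h\to 0$, separating the contribution of the volume charges $\nabla\cdot\bmp$ inside $\Omega$ and the out-of-plane component $\bmt$ from the surface charges $\bmn$ on $\partial\Omega$. Using \eqref{stray_mean} and testing with $\bu$ itself, one has $\int_{\R^3}|\bfa\bu|^2 = \int_{\Bfom_h}\bm\cdot\bfa\bu\,d\bfx$; since $\bm$ is $x_3$-independent, the relevant contributions come from $\nabla\cdot\bmp$ (a distribution on $\Omega\times(0,h)$), from $\bmt\,\partial_{x_3}\mathds{1}_{(0,h)}$ (surface charges on the top and bottom faces $\Omega\times\{0,h\}$), and from $-\bmn\,\mathds{1}_{(0,h)}\mathcal H^1\llcorner\partial\Omega$ (lateral surface charges). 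The key point is a scaling analysis: as the film is thin, the potential splits into a ``far field'' part generated by $\bmt$ on the faces — which, after rescaling, gives to leading order the local term $\frac1{\eta^2 h}\cdot \eta^2 h\int_\Omega \bmt^2$, i.e.\ $\int_\Omega\bmt^2$ (this is the classical shape-anisotropy / Gioia--James reduction) — and a ``near field'' boundary-layer part near $\partial\Omega$ generated by the lateral charges $\bmn$, which produces the logarithmically-weighted term $\frac{h|\log h|}{2\pi}\int_{\partial\Omega}\bmn^2$; after dividing by $\eta^2 h$ and recalling $\eps = \frac{\eta^2}{h|\log h|}$, this is exactly $\frac1{2\pi\eps}\int_{\partial\Omega}\bmn^2$.

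Concretely, I would follow the Kohn--Slastikov scheme (their Lemma 3 and Carbou's computations), but track the error terms quantitatively in the regime \eqref{regime}. First I would write $\bfa\bu = \bfa\bu_{vol} + \bfa\bu_{surf}$ where $\bu_{vol}$ is generated by the in-plane divergence and $\bu_{surf}$ by all the surface charges, and estimate the cross term by Cauchy--Schwarz. The in-plane divergence $\nabla\cdot\bmp$ contributes, after the thin-film reduction, only a lower-order term controlled by $\frac1{\eta^2}\int_\Omega(1-|\bmp|^2)\,dx \le |\log\eps|\,\eeeh(\bm)$ times a small factor (this is where the ``$\lambda(h)\eeeh(\bm)$'' structure of the right-hand side of \eqref{claim} comes from, together with \eqref{eq:bmhbd} which lets one trade $\int_\Omega\bmt^2$ against $\int_\Omega(1-|\bmp|^2)$). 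For the face charges $\bmt$, the Fourier-side computation of the reduced magnetostatic energy of an $x_3$-independent field in a slab of height $h$ gives $\int_{\R^3}|\bfa \bu_{faces}|^2 = h\int_\Omega \bmt^2\,dx + (\text{error})$, where the error involves the difference between the slab kernel and its $h\to0$ limit, of relative size $O(h/\ell)$, hence negligible against $\lambda(h)$. The boundary-layer analysis for the lateral charges is the delicate one: one flattens the boundary locally using the $C^{1,1}$ regularity, reduces to a half-space computation, and obtains $\frac{h|\log h|}{2\pi}\int_{\partial\Omega}\bmn^2\,d\mathcal H^1$ with a relative error controlled by $\frac{\log(\eta^2/h)}{|\log h|}$ (the cut-off between the ``$\log(1/h)$'' behaviour valid down to scale $h$ and the behaviour at scale $\eta^2/h$, the N\'eel-wall core size, which is exactly what distinguishes this regime from that of Kohn--Slastikov) plus curvature corrections of size $O(h)$; both are absorbed into $\lambda(h)$.

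The main obstacle is precisely the quantitative control of the lateral boundary-layer term. In Kohn--Slastikov's regime $\eta^2\sim h|\log h|$ one only needs the leading order, but here, because we want an error $o(1/|\log\eps|)$ in \eqref{eq:improvedest} (and $o(1)$ at worst), and since $|\log\eps|$ can be as small as $\log|\log h|$, one must show that the boundary-layer kernel genuinely behaves like $\log\frac1h$ with error $O\!\big(\log\frac{\eta^2}{h}\big) = O(\log|\log h|)$ rather than $O(\log\frac1h)$ — i.e.\ the correction is \emph{two} logarithmic orders down, not one. This requires carefully splitting the lateral-charge potential into a piece acting on scales between $h$ and $1$ (giving the $\log(1/h)$) and a remainder concentrated on scales below $\eta^2/h$, and using the $C^{1,1}$ flatness of $\partial\Omega$ together with an $L^2$ trace bound $\int_{\partial\Omega}\bmn^2 \lesssim \eps|\log\eps|\,\eeeh(\bm)$ to make the constants explicit in terms of $\eeeh(\bm)$. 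Once \eqref{claim} is established, plugging it into the proof of Lemma~\ref{lem:l1} together with \eqref{eq:bmhbd} and the elementary inequality $\big|\|a\|^2-\|b\|^2\big|\le (2\|a-b\|^2(\|a\|^2+\|b\|^2))^{1/2}$ already used there yields \eqref{eq:eandbare}, and thence Theorem~\ref{thm:lowerb}.
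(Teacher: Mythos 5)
Your proposal follows the same Kohn--Slastikov/Carbou potential-theoretic skeleton as the paper (volume charges $\nabla\cdot\bmp$, face charges $\pm\bmt$, lateral charges $\bmn$, the face term producing $h\int_\Omega \bmt^2\,dx$ and the lateral term producing $\frac{\eta^2 h}{2\pi\eps}\int_{\dOm}\bmn^2\,d\h^1$), but the error accounting goes wrong at exactly the two places where the lemma is delicate. First, the face-charge error is \emph{not} of relative size $O(h)$: in real space it equals $\int_\Omega \bmt^2(x)\bigl(1-\int_\Omega \Gamma_h(x-y)\,dy\bigr)dx$ for the slab kernel $\Gamma_h$, and the mass defect is of order $1$ at distance $\lesssim h$ from $\dOm$ and of order $h/\dist(x,\dOm)$ further in; since $\bmt$ need not be small near $\dOm$, the naive bound is $O(h|\log h|)$, which after dividing by $\eta^2|\log\eps|$ gives $\frac{1}{\eps|\log\eps|}$ and does not tend to zero in the regime \eqref{regime}. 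Controlling this term is precisely the paper's main quantitative novelty: one splits $\Omega$ into the regions at distance $\le h$, between $h$ and $\eta^2$, and beyond $\eta^2$ from $\dOm$, uses $\bmt^2\le 1$ in the first two (this is the source of the factor $\log\frac{\eta^2}{h}$ in $\lambda(h)$) and $\bmt^2\le 1-|\bmp|^2$ together with the penalty term and the a priori bound $\eeeh(\bm)\ge 2\pi-o(1)$ (Lemma \ref{lem:twopiboundforaven}) in the bulk. You instead locate the $\log\frac{\eta^2}{h}$ correction in the lateral boundary layer; but the lateral kernel is independent of $\eta$ and its boundary integral equals $2|\log h|+O(1)$ (Lemma \ref{lem:estimG1}), the error coming only from the $C^{1,1}$ geometry of $\dOm$ --- the scale $\eta^2/h$ plays no role there. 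So the genuinely delicate term is waved away, and the delicate analysis is aimed at a term that does not need it.

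Second, bounding the volume/surface cross term by Cauchy--Schwarz at the level of stray-field energies is too lossy. Even granting the optimal bounds $\|\bfa U_{\mathrm{vol}}\|_{L^2(\R^3)}^2\lesssim h^2|\log\eps|\,\eeeh(\bm)$ and $\|\bfa U_{\mathrm{surf}}\|_{L^2(\R^3)}^2\lesssim \eta^2 h|\log\eps|\,\eeeh(\bm)$, the cross term contributes, after division by $\eta^2 h|\log\eps|$, an error of order $(h/\eta^2)^{1/2}\eeeh(\bm)$; for $\eta^2=h|\log h|^\beta$ with $0<\beta<1$ this is $|\log h|^{-\beta/2}\eeeh(\bm)$ while $\lambda(h)\sim |\log h|^{-\beta}$, so the stated bound \eqref{claim} is not reached, and in the borderline part of regime \eqref{regime2} (e.g.\ $\eps|\log h|=(\log|\log h|)^{3/2}$) one even has $(h/\eta^2)^{1/2}|\log\eps|\to\infty$, so the improved estimate \eqref{eq:improvedest} would be lost downstream. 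The paper avoids this by estimating the mixed term (the term $\mathcal B$ in its proof) directly through the kernel: both $x_3$-integrations collapse, producing a factor $h^2$, and the generalized Young inequality for the kernel $|x-y|^{-1}$ between $L^2(\Om)$ and $L^2(\dOm)$ gives $|\mathcal B|\lesssim h^2\|\nabla\cdot\bmp\|_{L^2(\Om)}\|\bmp\cdot\nu\|_{L^2(\dOm)}\lesssim h^2\eps^{1/2}|\log\eps|\,\eeeh(\bm)$, i.e.\ a normalized error $\frac{h\eps^{1/2}}{\eta^2}\eeeh(\bm)\ll \lambda(h)\eeeh(\bm)$. Finally, your sketch omits the off-diagonal lateral contribution involving $\bmn(x)-\bmn(y)$ (the paper's $\mathcal G_2$), whose control requires an $\dot H^{1/2}(\dOm)$ trace estimate obtained by extending $\nu$ to $\overline\Om$ and again invoking Lemma \ref{lem:twopiboundforaven}, not only the $L^2(\dOm)$ bound you quote.
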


\proof{} 
   From \eqref{Helmholtz}, integration by parts yields
$$\int_{\RR^3} | \bfa
   \bu|^2 \ d\bfx=\int_{\Bfom_h} \bfa 
   \bu\cdot \bm \ d\bfx=-\int_{\Bfom_h} \bu \nabla \cdot  \bmp\, d\bfx +\int_{\partial \Bfom_h} \bu (\bm\cdot {\bfnu}) \ d\h^2(\bfx),$$
   where $\bfnu=(\nu, \nu_3)$ is the unit outer normal vector to $\partial \Bfom_h$. By Proposition \ref{pro:append}  in the Appendix, we have for every $\bfx\in \R^3$: 
$$4\pi \bu(\bfx)=-\int_{\Bfom_h} \frac{1}{|\bfx-\bfy|} \nabla\cdot  \bmp(y) \, d\bfy+ \int_{\partial \Bfom_h}\frac{1}{|\bfx-\bfy|} (\bm\cdot {\bfnu})(\bfy)\, d\h^2(\bfy).$$ 
Combining the above equalities, we obtain:
$$4\pi\int_{\RR^3} | \bfa
   \bu|^2 \ d\bfx={\cal A}+2{\cal B}+{\cal C}$$
where the terms ${\cal A}$ and ${\cal B}$ are estimated as in \cite[Lemma 1 and 2]{KohnSlastiko:2005a} using the generalized Young's inequality:\footnote{In particular,  if $f\in L^2(\Om)$ and $g\in L^2(\dOm)$, then $\int_\Om \int_{\dOm} \frac{f(x) g(y)}{|x-y|}\, dx d\h^1(y)\leq C\|f\|_{L^2(\Om)} \|g\|_{L^2(\dOm)}$. Indeed, denoting for $y\in \dOm$, $F(y)=\int_\Om \frac{f(x)}{|x-y|}\, dx$, H\"older's inequality implies $F^2(y)\leq \int_\Om \frac1{|x-y|^{3/2}}\, dx \int_\Om \frac{f^2(x)}{|x-y|^{1/2}}\, dx$ and thus, $\int_{\dOm} F^2(y)\, d\h^1(y)\leq c(\Om)  \|f\|^2_{L^2(\Om)} \sup_{x\in \Om} \int_{\dOm}
\frac1{|x-y|^{1/2}} \, d\h^1(y)\leq C(\Om) \|f\|^2_{L^2(\Om)}$. The claimed inequality follows by the Cauchy-Schwarz inequality.
}
\begin{align*}
|{\cal A}|&=\bigg|\int_0^h\int_0^h\int_{\Om} \int_{\Om} \frac{\nah(x)\, \nah(y)}{\sqrt{|x-y|^2+(x_3-y_3)^2}}\, d\bfx d\bfy\bigg|\\
&\leq h^2 \int_{\Om} \int_{\Om} \frac{|\nah(x)|\, |\nah(y)|}{|x-y|}\, dxdy\\
&\leq C h^2 \int_{\Om} |\nah|^2\, dx\leq Ch^2|\log \eps| \eeeh(\bm),
\end{align*}
and\footnote{Note that the terms in the integrand of $\cal B$ corresponding to the top and bottom boundary $\partial \Bfom_h$ will cancel after integration.}
  \begin{align*}|{\cal B}|&=\bigg|\int_{\Bfom_h} \int_{\partial \Bfom_h} \frac{\nah(x)\, (\bm\cdot \bfnu)(\bfy)}{|\bfx-\bfy|}\, d\bfx d\bfy\bigg|\\
  &=\bigg|\int_0^h\int_0^h \int_{\Om} \int_{\partial \Om} \frac{\nah(x)\, \bmn(y)}{\sqrt{|x-y|^2+(x_3-y_3)^2}}\, d\bfx d\bfy\bigg|\\
  & \leq h^2 \int_{\Om} \int_{\partial \Om} \frac{|\nah(x)|\, |\bmn(y)|}{|x-y|}\, dx dy\\
  &\leq Ch^2 \|\nah \|_{L^2(\Om)} \|\bmn\|_{L^2(\partial \Om)} \\
  &\leq Ch^2\eps^{1/2}|\log \eps| \eeeh(\bm),\end{align*}
 \noindent  while $${\cal C}=\int_{\partial \Bfom_h} \int_{\partial \Bfom_h} \frac{(\bm\cdot \bfnu)(\bfx)\, (\bm\cdot \bfnu)(\bfy)}{|\bfx-\bfy|}\, d\bfx d\bfy
  ={\cal C}_1+{\cal C}_2$$ 
  with\footnote{Note that the terms in the integrand of ${\cal C}$ corresponding to the mixing term $\bmt(\bfx) \bmn(y)$ as $\bfx$ covers the top and bottom boundary $\partial \Bfom_h$ will cancel after integration.} $${\cal C}_1=4\pi h \int_{\Om} \int_{\Om} {\bmt(x)\, \bmt(y)}{\Gamma_h(x-y)}\, dx dy  $$
  where $\Gamma_h(x)=\frac{1}{2\pi h} \bigg(\frac{1}{|x|}-\frac{1}{\sqrt{|x|^2+h^2}}\bigg)$, $x\in \R^2$ 
  and
  \begin{align*}
  {\cal C}_2=\int_0^h \int_0^h \int_{\partial \Om} \int_{\partial \Om} \frac{\bmn(x)\, \bmn(y)}{|\bfx-\bfy|}\, d\bfx d\bfy.\end{align*} 

\medskip

\noindent {\bf Estimate of ${\cal C}_1$}: The main novelty compared to the study in Kohn-Slastikov \cite{KohnSlastiko:2005a} is the
 following result, which replaces a limit calculation by a more quantitative estimate:
\be
\label{estimC1}
\frac{1}{\eta^2 |\log \eps|}\bigg|\frac{{\cal C}_1}{4\pi h }-\int_{\Om} \bmt^2(x)\, dx \bigg|\leq C \frac{h}{\eta^2} \big(\frac{\log \frac{\eta^2}{h}}{|\log \eps|} 
+ 1\big) \eeeh(\bm).
\ee
For that, since $\diam(\Omega)=1$ we can use that 
 $\Gamma_h(x)=\frac{h}{2\pi |x|^2} \rh(|x|)$ for $x\in B^2\subset \RR^2$  where 
$$\rh(r)=\frac{r}{(r+\sqrt{r^2+h^2})\sqrt{r^2+h^2}}  {\mathds 1}_{\{0\leq r\leq 1\}}(r), \quad r\geq 0.$$ First, note that $\{\rh\}_{h\downarrow 0}$ is a bounded sequence in $L^1(\RR^2)$, i.e.,
\be
\label{rho_boun}
\int_{\RR^2} \rh(|x|)\, dx\leq \pi.\ee Moreover, for every $R\in (0,1]$, one computes:
\begin{align*}
\int_{B_R(0)} \Gamma_h(x)\, dx&=h\int_0^R \frac{dr}{(r+\sqrt{r^2+h^2})\sqrt{r^2+h^2}}\\
&=\int_0^{R/h} \frac{ds}{(s+\sqrt{s^2+1})\sqrt{s^2+1}}=1-\left(\frac{R}{h}+\sqrt{1+\big(\frac{R}{h}\big)^2}\right)^{-1}\leq 1.
\end{align*} 
In particular, we get
\be
\label{esti2}
0\leq 1-\int_{B_R(0)} \Gamma_h(x)\, dx\leq \frac{h}{R} \quad \textrm{ for } \quad R\in (0,1].
\ee
Since $\Om$ is $C^{1,1}$, there exists $r_1:=r_1(\Omega)\in (0, 1=\diam(\Om))$ such that every point $x\in \Omega$ with $\dist(x, \partial \Om)<r_1$ has a unique orthogonal projection on the boundary $\partial \Om$, i.e., the crossing of two normal directions on $\partial \Om$ in the interior of $\Om$ happens at a distance larger than $r_1$ from the boundary.
For $R<r_1$ we denote by 
\be
\label{not_om}
\Om_R=\{x\in\Omega\, :\, \dist(x,\partial \Om)<R\} 
\ee
the region around the boundary $\partial \Om$ at a distance less than $R$.
Writing $2\bmt(x)\bmt(y)=\bmt(x)^2+\bmt(y)^2-(\bmt(x)-\bmt(y))^2$, we obtain that:
$${\cal C}_1=-{\cal E}_1+{\cal E}_2$$
with
\begin{align*}
{\cal E}_1&=h^2 \int_{\Om} \int_{\Om} \frac{(\bmt(x)-\bmt(y))^2}{|x-y|^2} {\rh(|x-y|)}\, dx dy\\
&\leq h^2 \int_{\RR^2} \int_{\RR^2} \int_0^1 \bigg|\nabla [T(\bmt)](x+s(y-x))\bigg|^2 {\rh(|x-y|)}\, dx dy ds\\
&\leq h^2 \int_{\RR^2} |\nabla [T(\bmt)](x)|^2 \, dx \int_{\RR^2} \rh(|y|)\, dy\\
&\stackrel{\eqref{rho_boun}}{\leq} C h^2 \int_{\Om}\left( |\nabla \bmt|^2{+\bmt^2} \right)\, dx
\leq Ch^2|\log \eps| \eeeh(\bm),
\end{align*}
where $T:H^1(\Om)\to H^1(\RR^2)$ is a linear continuous extension operator, $\bmt^2\leq 1-|\bar m_h|^2$, $\eta\leq 1$
and
$${\cal E}_2=4\pi h \int_{\Om} \int_{\Om} {\bmt^2(x)} {\Gamma_h(|x-y|)}\, dx dy.$$
It remains to estimate ${\cal E}_2$. 
As $\eta \to 0$, we may assume {in the regime \eqref{regime} that $2h\leq \eta^2\leq \frac{r_1}{2}$}.
By decomposing the domain $x\in \Om=\Om_{h}\cup (\Om_{\eta^2} \setminus 
\Om_{h})\cup (\Om \setminus 
\Om_{\eta^2})$ (with the notation \eqref{not_om}), since $\bmt^2\leq 1-|\bar m_h|^2\leq 1$, we compute:\footnote{We use that $1- \int_{\Om} {\Gamma_h(|x-y|)}\, dy\leq 1- \int_{B(0, \dist(x, \partial \Om))} {\Gamma_h(|z|)}\, dz$ if $x \in \Om_{\eta^2} \setminus 
\Om_{h}$, and $1- \int_{\Om} {\Gamma_h(|x-y|)}\, dy\leq 1- \int_{B(0, \eta^2)} {\Gamma_h(|z|)}\, dz$ if $x \in \Om \setminus 
\Om_{\eta^2}$. } 
\begin{align*}
&\bigg|\frac{{\cal E}_2}{4\pi h}-\int_{\Om} \bmt^2(x)\, dx \bigg|\stackrel{\eqref{esti2}}{=} \int_{\Om} \bmt^2(x)\left(1- \int_{\Om} {\Gamma_h(|x-y|)}\, dy \right) \, dx\\
&\stackrel{\eqref{esti2}}{\leq} \int_{\Om_{h}} 1  \, dx +\int_{\Om_{\eta^2} \setminus 
\Om_{h}}\, \frac{h}{\dist(x, \partial \Om)}dx+
\frac{h}{\eta^2}\int_{\Om\setminus 
\Om_{\eta^2}}(1- \bmp^2(x)) \, dx\\
&\leq C \left(h+h\int_h^{\eta^2} \frac{dr}{r}\right)+h |\log \eps| \eeeh(\bm)\leq C h \big( \log \frac{\eta^2}{h} + |\log \eps|\big)  \eeeh(\bm),
\end{align*} where $C=C(\partial \Om)>0$ depends only on the geometry of $\Om$
and we have used Lemma \ref{lem:twopiboundforaven} below yielding 
$\eeeh(\bm)\geq 2\pi-o(1)$ as $h\to 0$. 
 Thus, \eqref{estimC1} is proved.

\medskip

\noindent {\bf Estimate of ${\cal C}_2$}: We prove that
\be
\label{estimC2}
\frac{1}{|\log \eps|} \bigg| \frac{{\cal C}_2}{4\pi \eta^2 h}-\frac{1}{2\pi \eps} \int_{\partial \Om} \bmn^2\, d\h^1\bigg| { \ll} \frac{h}{\eta^2}
\eeeh(\bm).
\ee
This estimate is similar to  
 Lemma 4 in
Kohn-Slastikov \cite{KohnSlastiko:2005a}, 
but more delicate to prove in our regime. 
The idea here is to use a stronger estimate inspired by the work of Carbou \cite{Carbou:2001a}. 
For that,
we write
$$\frac{{\cal C}_2}{4\pi \eta^2 h |\log \eps|} =\frac{{\cal G}_1+{\cal G}_2}{4\pi}$$
with the rescaled quantities in $h$:
\begin{align*}
{\cal G}_1&=\frac{h}{ \eta^2|\log \eps|}  \int_{\partial \Om} \int_{\partial \Om} \bmn^2(x) K_h(x-y) dx dy
\end{align*}
where 
\be
\label{kh}
K_h(z)=\int_0^1 \int_0^1 \frac{1}{\sqrt{|z|^2+h^2(s-t)^2}}\, ds dt, \quad z\in \R^2
\ee
and
\begin{align*}
|{\cal G}_2|&=\frac{h}{ \eta^2|\log \eps|} \bigg| \int_0^1 \int_0^1 \int_{\partial \Om} \int_{\partial \Om} \frac{\bmn(x) (\bmn(x)- \bmn(y))}{\sqrt{|x-y|^2+h^2(s-t)^2}}\, dx dy ds dt\bigg|\\
&\leq \frac{h}{ \eta^2|\log \eps|}  \int_{\partial \Om} \int_{\partial \Om} |\bmn(x)| \frac{ |\bmn(x)- \bmn(y)|}{|x-y|}\, dx dy \\
&\leq \frac{Ch}{ \eta^2|\log \eps|} \|\bmp\cdot \nu\|_{L^{2}(\partial \Om)}
 \|\bmp\cdot \nu\|_{\dot{H}^{1/2}(\partial \Om)}\\
 & \leq  \frac{Ch}{ \eta^2|\log \eps|} \sqrt{\eps |\log \eps|\eeeh(\bm)}
 \|\bmp\cdot \nu\|_{\dot{H}^{1}(\Om)}
 \leq  \frac{Ch\eps^{1/2}}{ \eta^2}  
  \eeeh(\bm)
  {\ll \frac{h}{\eta^2}\eeeh(\bm)}.
\end{align*}
Above, we used the Cauchy-Schwarz inequality and we have extended $\nu$ as a Lipschitz 
vector field in $\overline{\Om}$ with $|\nu|\le 2$ so that Lemma \ref{lem:twopiboundforaven} below yields for $h>0$ small:
$$\|\bmn\|_{\dot{H}^{1}(\Om)}\leq C(\|\bmp\|_{\dot{H}^{1}(\Om)}+1)\lesssim \sqrt{ |\log \eps|\eeeh(\bm)}+ \sqrt{\eeeh(\bm)} \lesssim \sqrt{ |\log \eps|\eeeh(\bm)}.$$

It remains to estimate ${\cal G}_1$.
In fact, as $\eps=\frac{\eta^2}{h|\log h|}$, 
one has that
\begin{align*}
&\bigg| \frac{{\cal G}_1}{4\pi} -\frac{1}{2\pi \eps |\log \eps|} \int_{\partial \Om} \bmn^2\, d\h^1\bigg|\\
&\leq \frac{1}{4\pi \eps|\log \eps|} \int_{\partial \Om} \bmn^2  \,dx\, \bigg\|2-\frac{1}{|\log h|}\int_{\partial \Om} K_h(x-y) dy\bigg\|_{L^\infty(x\in \partial \Om)} \\
&\stackrel{Lemma\, \,  \ref{lem:estimG1}}{\leq} \frac{C}{|\log h|} \eeeh(\bm) = \frac{h\eps}{\eta^2} \eeeh(\bm)
\stackrel{\eqref{regime}}{\ll} \frac{h}{\eta^2} \eeeh(\bm). 
\end{align*}
Therefore, \eqref{estimC2} follows and the proof of Lemma~\ref{lem:claim} is finished.
\qed

\medskip

We have used above the  following estimate of $K_h$ in \eqref{kh} that permits to track its dependance in $h$, which is an improvement of a result 
of Carbou \cite[p. 1537]{Carbou:2001a}: 

\begin{lem}
\label{lem:estimG1}
Assume that $\partial\Om$ is a simply connected $C^{1,1}$ 
domain and let $K_h$ given by \eqref{kh}.
Then
\begin{equation}\label{eq:Khest}
\sup_{x\in\partial\Om} \left|\frac1{|\log h|} \int_{\partial\Om} K_h(x-y)dy-2\right|
\le \frac{C}{|\log h|} \quad {\textrm{for } h\in (0, \frac12].}
\end{equation}
\end{lem}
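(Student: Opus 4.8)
The plan is to show that $\int_{\dOm}K_h(x-y)\,d\h^1(y)=2|\log h|+O(1)$ uniformly in $x\in\dOm$, with the $O(1)$ depending only on $\Om$ and valid for all $h\in(0,\frac12]$; dividing by $|\log h|$ (which is $\geq\log 2$ for such $h$) then gives \eqref{eq:Khest}. First I would put the kernel in a one–variable closed form: since the density of $s-t$ for $s,t$ uniform on $[0,1]$ is the triangular density $1-|u|$ on $[-1,1]$,
$$K_h(z)=2\int_0^1\frac{(1-u)\,du}{\sqrt{|z|^2+h^2u^2}}=\frac{2}{h}\operatorname{arcsinh}(h/|z|)-\frac{2}{h^2}\big(\sqrt{|z|^2+h^2}-|z|\big),$$
from which I record two elementary pointwise bounds used throughout: $0\le K_h(z)\le 1/|z|$ and, viewing $K_h$ as a radial function, $|K_h'(r)|\le 2/r^2$ for $r>0$.

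Next I would split $\dOm$ according to arc–length distance: fix a small $\delta=\delta(\Om)>0$ and set $A_x=\{y\in\dOm: d_{\dOm}(x,y)<\delta\}$ and $B_x=\dOm\setminus A_x$, where $d_{\dOm}$ denotes arc–length distance along $\dOm$. On $B_x$, since $\dOm$ is a simple closed $C^{1,1}$ curve (this is where simple connectedness is used), compactness yields a uniform constant $\mu=\mu(\Om,\delta)>0$ with $|x-y|\ge\mu$ for $y\in B_x$, hence $\int_{B_x}K_h(x-y)\,d\h^1\le \h^1(\dOm)/\mu=O(1)$. For the near part I parametrise $A_x$ by arc length based at $x$, $y=\gamma_x(\sigma)$ with $|\sigma|<\delta$, so that $\int_{A_x}K_h(x-y)\,d\h^1=\int_{-\delta}^{\delta}K_h(r(\sigma))\,d\sigma$ with $r(\sigma)=|x-\gamma_x(\sigma)|$. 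The $C^{1,1}$ regularity gives, uniformly in $x$, a second–derivative bound $|\gamma_x''|\le C(\Om)$, hence (for $\delta$ small) $\frac12|\sigma|\le r(\sigma)\le|\sigma|$ together with $\big|r(\sigma)-|\sigma|\big|\le C(\Om)\,\sigma^2$.

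I would then compare $\int_{-\delta}^{\delta}K_h(r(\sigma))\,d\sigma$ with the straight–line model $\int_{-\delta}^{\delta}K_h(|\sigma|)\,d\sigma$. By the mean value theorem and the bound $|K_h'|\le 2/r^2$ evaluated at a point $\geq\frac12|\sigma|$, combined with $|r(\sigma)-|\sigma||\le C\sigma^2$, one gets $|K_h(r(\sigma))-K_h(|\sigma|)|\le C(\Om)$ pointwise (uniformly in $h$), so the two integrals differ by $O(1)$. The model integral is evaluated exactly from the closed form: substituting $\sigma=h\tau$ and using the primitive $\int\operatorname{arcsinh}(1/\tau)\,d\tau=\tau\operatorname{arcsinh}(1/\tau)+\operatorname{arcsinh}(\tau)$ gives $\int_0^\delta\frac{2}{h}\operatorname{arcsinh}(h/\sigma)\,d\sigma=2\operatorname{arcsinh}(\delta/h)+\frac{2\delta}{h}\operatorname{arcsinh}(h/\delta)=2|\log h|+O(1)$, while the elementary primitive of $\sqrt{\sigma^2+h^2}$ gives $-\frac{2}{h^2}\int_0^\delta(\sqrt{\sigma^2+h^2}-\sigma)\,d\sigma=-\operatorname{arcsinh}(\delta/h)+O(1)=-|\log h|+O(1)$; adding these, $\int_0^\delta K_h(\sigma)\,d\sigma=|\log h|+O(1)$, so $\int_{-\delta}^{\delta}K_h(|\sigma|)\,d\sigma=2|\log h|+O(1)$. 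Summing the near correction ($O(1)$), the model value ($2|\log h|+O(1)$), and the far part ($O(1)$) gives the claim.

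The computation is routine once the kernel is in closed form; the real work is keeping every error term \emph{genuinely uniform} in $x\in\dOm$ and in $h\in(0,\frac12]$. This is exactly where the hypotheses enter: the $C^{1,1}$ assumption supplies the uniform second–derivative bound on the arc–length parametrisation, hence the uniform comparison $r(\sigma)=|\sigma|(1+O(|\sigma|))$ used in the near part, and the simple connectedness (so that $\dOm$ is a Jordan curve) is what prevents the boundary from self–approaching and produces the positive constant $\mu(\Om,\delta)$ in the far–part estimate. I expect the bookkeeping of these uniform constants, rather than any single estimate, to be the main obstacle.
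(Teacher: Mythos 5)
Your proof is correct, and its overall strategy coincides with the paper's: reduce $K_h$ to an explicit one-variable kernel, split $\partial\Omega$ into a near arc around $x$ and a far part (where simplicity of the curve gives a uniform Euclidean lower bound on $|x-y|$, exactly as in the paper's \eqref{estk2}), and compare the near part with the flat half-line model, which contributes $2|\log h|+O(1)$. The differences are in execution rather than in the route. You obtain the closed form of $K_h$ via the triangular density of $s-t$, which is the same function the paper writes as $\frac2h f(|z|/h)$; you then control the curvature correction by the mean value theorem together with the pointwise bounds $0\le K_h\le 1/|z|$ and $|K_h'(r)|\lesssim r^{-2}$ combined with $\bigl|\,|x-\gamma_x(\sigma)|-|\sigma|\,\bigr|\le C\sigma^2$, giving a pointwise $O(1)$ error uniformly in $h$; the paper instead sandwiches the near integral using monotonicity of $f$ and the bounds \eqref{estk1}, and then expands $(1\pm 4h\alpha t)^{-1/2}$ and invokes the asymptotics \eqref{primitivul} of $\int_0^t f$ and $\frac1t\int_0^t sf$. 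Likewise you evaluate the flat model integral exactly via elementary primitives of $\operatorname{arcsinh}$, where the paper again uses \eqref{primitivul}. Your mean-value comparison is arguably a bit cleaner (it avoids the monotone sandwich and the separate first/second moment asymptotics of $f$), at the small price of having to justify differentiability of $K_h$ in $r$ and to keep the constants $\delta(\Omega)$, $\mu(\Omega,\delta)$, $C(\Omega)$ uniform, which you do address; both arguments yield the same uniform bound $\bigl|\int_{\partial\Omega}K_h(x-y)\,dy-2|\log h|\bigr|\le C$ and hence \eqref{eq:Khest}.
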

\begin{proof}{}
Note that by symmetry, for every $z\in \RR^2$,
\[
K_h(z)=\int_0^1 \int_0^1 \frac{1}{\sqrt{|z|^2+h^2(s-t)^2}}\, ds dt
= 2
\int_0^1 \int_0^t \frac{1}{\sqrt{|z|^2+h^2(s-t)^2}}\, ds dt.
\]
By a change of variable, the inner integral becomes:
\begin{align*}
\int_0^t \frac{1}{\sqrt{|z|^2+h^2(s-t)^2}}\, ds
=\frac1h\int_0^{ht} \frac{1}{\sqrt{|z|^2+w^2}}\, dw
=\frac1h \operatorname{arsinh}(\frac{ht}{|z|}), \quad z\neq 0,
\end{align*}
where $\operatorname{arsinh}(t)=\log (t+\sqrt{t^2+1})$ for $t\in \R$, so that integration by parts yields
\begin{align*}
K_h(z)
&=\frac2h\int_0^1 \operatorname{arsinh}(\frac{ht}{|z|})\, dt
=\frac 2 h f(\frac{|z|}{h}), \quad z\neq 0,
\end{align*}
where
$$f(t):=\operatorname{arsinh}\frac1t-\frac{1}{t+\sqrt{1+t^2}}>0, \quad t>0$$
is a positive and decreasing function on $(0, \infty)$. Moreover, we check that:\footnote{The second inequality follows for example by 
considering $g(s)=\frac1s f(\frac1s)$, since $g(s)\to\frac12$ and $g'(s)\to 0$ as $s\to 0$.}
$$\lim_{t\to 0}\frac{f(t)}{\log \frac 1 t}=1 \quad \textrm{and} \quad \bigg|tf(t)-\frac 12\bigg|\leq \frac C{t^2} \textrm{ as } t\to \infty,$$
so that
\be
\label{primitivul}
\bigg|\int_0^t f(s)\, ds-\frac{\log t}{2}\bigg|\leq C \quad \textrm{and} \quad  \bigg|\frac 1 t\int_0^t s f(s)\, ds\bigg|\leq C \quad \textrm{ as } t\to \infty.
\ee
For fixed $x\in\partial\Omega$, we set $z=x-y$ and integrate over $y\in\partial\Omega$.  We choose the arclength parameterization $\phi:[0,L)\to \partial \Omega$ such that $\phi(0)=x$ and $\phi$ is bijective ($\phi$ extends to a periodic $C^{1,1}$ function on $\RR$) with
$|\frac{d}{dt} \phi|= 1$ in $[0,L)$ (here, $L$ is the length of $\partial\Omega$). Since $\phi$ is $C^{1,1}$, Taylor's expansion implies for $\alpha=\frac 1 2 \|\frac{d^2}{dt^2}\phi\|_{L^\infty}{>0}$:
\be
\label{estk1}
|s|(1-\alpha|s|)\leq |\phi(t+s)-\phi(t)|\leq|s|(1+\alpha|s|), \quad \textrm{for every} \quad  t, s\in \R.\ee
Since $\phi$ is  continuous and injective {on every interval of length less than $L$}, then for every $\kappa\in (0, \frac L 2)$ there exists $\beta=\beta(\kappa)>0$ such that 
\be
\label{estk2}
L\ge |\phi(t+s)-\phi(t)|\geq \beta \quad \textrm{for every} \quad t\in \R, \, s\in (\kappa, L-\kappa).\ee
Fix some small $\kappa$ (more precisely, assume {$\kappa<\min \{ \frac{L}{100}, \frac1{2\alpha} \}$ and $0<2\kappa \alpha(1-\frac{\kappa \alpha}2)<\frac12$)}. Then 
\[
\int_{\partial\Omega} K_h(x-y) dy
= \frac2h \int_{-\kappa}^\kappa f(\frac{|\phi(s)-x)|}{h})ds
+ \frac2h \int_{\kappa}^{L-\kappa} f(\frac{|\phi(s)-x)|}{h})ds
=:I_1(h)+I_2(h).
\]
As $f$ is decreasing, we can estimate this from above and below using
estimates \eqref{estk1} and \eqref{estk2}. In particular, as $\phi(0)=x$,
\[
\frac4h \int_{0}^\kappa f(\frac{(1+\alpha s)s}{h})ds
\le I_1(h) 
\le \frac4h \int_{0}^\kappa f(\frac{(1-\alpha s)s}{h})ds
\]
and similarly,
\[
\frac{2(L-2\kappa)}{L} \bigg(\frac{L}{h}f(\frac{L}{h})\bigg)=\frac2h \int_{\kappa}^{L-\kappa} f(\frac{L}{h})ds
\le I_2(h)
\le
\frac2h \int_{\kappa}^{L-\kappa} f(\frac{\beta}{h})ds=\frac{2(L-2\kappa)}{\beta} \bigg(\frac{\beta}{h}f(\frac{\beta}{h})\bigg).
\]
As $tf(t)\to \frac12$ as $t\to \infty$, we obtain that
$$0\leq I_2(h)\leq C\quad \textrm{as } \, h\to 0.$$
It remains to prove that $I_1(h)\sim 2|\log h|$ as $h\to 0$. For that, 
note that by substitution, 
\[
\int_0^\kappa f(\frac{s(1\pm\alpha s)}{h})ds
=h\int_0^{\kappa(1\pm\alpha\kappa)/h} \frac{f(t)}{\sqrt{1\pm 4ht\alpha}}dt.
\]
\nd {\bf Lower bound for $I_1(h)$}. Observe that 
$\frac{1}{\sqrt{1+s}}\ge 1-\frac s 2$ for every $s>0$ (in particular for $s=4ht\alpha$). It follows that
\[
I_1(h)\ge 4\int_0^{\kappa(1+\alpha\kappa)/h} f(t)dt - 
8h\alpha\int_0^{\kappa(1+\alpha\kappa)/h} tf(t)dt\stackrel{\eqref{primitivul}}\geq 2|\log h| -C.
\]
\nd {\bf Upper bound for $I_1(h)$}.
We similarly use $\frac{1}{\sqrt{1-s}}\leq 1+2s$ for every $s\in[0, \frac 1 2]$ (in particular for $s=4ht\alpha\in [0,2\kappa \alpha(1-\kappa \alpha)]\subset [0, \frac 1 2]$ by our choice of some small fixed $\kappa$ and $h<1/2$).
It follows that
\[
I_1(h)\le 4\int_0^{\kappa(1-\alpha\kappa)/h} f(t)dt + 
32h\alpha\int_0^{\kappa(1-\alpha\kappa)/h} tf(t)dt\stackrel{\eqref{primitivul}}\leq 2|\log h| +C.
\]
Thus, \eqref{eq:Khest} follows. \qed
\end{proof}

\proof{ of Theorem \ref{thm:lowerb}}
We may assume $ \sup_h E_h(\bfm_h)\le K<\infty$. From \eqref{eq:eandbare} of Lemma~\ref{lem:l1}, we see that
$$E_h(\bfm_h)\ge \eeeh(\bm)-\left(\eeeh(\bm)+\sqrt\frac{{K}}{|\log\eps|}\right) \lambda(h),$$
where $\lambda(h)$ is given in \eqref{ah}.
From \eqref{regimelogs} 
 we see that  that $ \lambda(h)=o(1)$ as $\eps\to 0$,
and we can conclude that we must have
\[
\limsup_{h\to 0} \eeeh(\bm) \le K,
\]
so we obtain the bound
\[
E_h(\bfm_h)\ge \eeeh(\bm) - \lambda(h) \left(K+1+{ 2}\sqrt{\frac{K}{|\log\eps|}}\right)=\eeeh(\bm)-o(1) \quad \textrm{as} \, \, h\to 0.
\]
Furthermore, in the regime \eqref{regime2}, 
\[
 \lambda(h)  =\frac { \log|\log h|}{\eps |\log\eps| |\log h|}  \ll \frac{1}{|\log\eps|},
\]
and hence we obtain
\[
E_h(\bfm_h)\ge \eeeh(\bm) -o(\frac1{|\log\eps|})  \quad \textrm{as} \, \, h\to 0.
\]
If the  magnetizations $\bfm_h$ are  invariant in $x_3$-direction (when $\bfm_h$ coincides with the average $\bm$), 
then clearly
\[
\frac1h \int_{\Bfom_h} | \bfa \bfm_h|^2 \, d\bfx = \int_\Om |\nabla \bm|^2 \, dx,
\]
and since  $U_h=\bar U_h$ in $\R^3$, the  stray field term in $E_h(\bfm_h)$ and the penalty terms in  $\eeeh(\bm)$ are close to each other by \eqref{claim}, so 
the asymptotic inequalities become asymptotic equalities as claimed.

\qed

\section{Proof of Theorem~\ref{thm:t1}}\label{sec:sec3}
In this section we prove Theorem~\ref{thm:t1} as a consequence of the estimates in Theorem~\ref{thm:lowerb} and the results we obtained in \cite{IK_jac} 
for a functional related to $\eeeh$ in \eqref{eq:defofebar}. More precisely, in \cite{IK_jac}, we studied   
the following energy functional for $u\in H^1(\Om;\RR^2)$:
\be
\label{eq:introenepseta}
\eee(u) = \int_{\Omega} |\nabla u|^2 \, dx + \frac1{\eta^2}\int_\Omega (1-|u|^2)^2 \, dx 
+ \frac{1}{2\pi\eps} \int_{\dOm} (u\cdot \nu)^2 \, d\h^1, \quad \eps, \eta >0.
\ee
In our context  $\eps=\eps(h)$ and $\eta=\eta(h)$, note that for $u\in H^1(\Om;\RR^2)$ with $|u|\le 1$  in $\Om$,
\be\label{eq:eeeheee}
\eeeh(u)\ge\frac1{|\log\eps|} \eee(u)
\ee
because $(1-|u|^2)^2\le (1-|u|^2)$ as $|u|\le 1$. We always use for $u\in H^1(\Om;\RR^2)$ the identification  $u\equiv (u,0)\in H^1(\Om;\RR^3)$ as $\eeeh$ is defined for $\bar B^3$-valued  maps.
Moreover, if  $u\in H^1(\Om; \Ss^1)$, then \eqref{eq:eeeheee} becomes equality:
\be\label{eq:eeeheee2}
\eeeh(u)=\frac1{|\log\eps|} \eee(u).
\ee
We recall here the $\Gamma$-convergence result that we established in our previous paper 
\cite[Theorems~1.2, 1.4 and 1.5]{IK_jac} that is essential in the sequel:

\begin{thm}[\cite{IK_jac}]
\label{thm:gammacforee}
Let $\Omega\subset \R^2$ be a bounded, simply connected $C^{1,1}$ domain, 
$\eps\to 0$ and $\eta=\eta(\eps)\to 0$ be sequences / families satisfying $|\log\eps|\ll |\log\eta|$. 
Assume $u_\eps\in H^1(\Om;\R^2)$ satisfy
\[
\limsup_{\eps\to 0} \frac{1}{|\log\eps|}E_{\eps,\eta}(u_\eps) <\infty.
\]
\begin{itemize}
\item[i)] {\textbf{Compactness of global Jacobians and $L^p(\dOm)$-compactness of $u_\eps\big|_{\partial \Om}$}. 
For a subsequence,
 the global Jacobians $\jaco(u_\eps)$ 
 converge  to 
a measure $J$ on the closure $\overline{\O}$ in the sense
 of \eqref{conv_lip},  
$J$ is supported on $\dOm$ and has the form \eqref{newlab}
for $N$ distinct boundary vortices $a_j \in \partial\O$ carrying the {non-zero} {multiplicities}
$d_j\in\ZZ\setminus \{0\}$. Moreover, for a subsequence, the trace $u_\eps\big|_{\partial \Om}$ converges as $\eps\to 0$ in $L^p(\dOm)$ (for every $p\in [1, \infty)$) to $e^{i\phi}\in BV(\dOm; \Ss^1)$ for a lifting $\phi$ of the tangent field $\pm \tau$ on $\dOm$ determined (up to a constant in $\pi \ZZ$) by 
$\partial_\tau \phi =-J$ on $\dOm$.
}

\item[ii)] \textbf{Energy lower bound at the first order}. If $(u_\eps)$ satisfies the convergence 
assumption on the Jacobians  as the sequence / family $\eps\to 0$ as in $i)$, then\footnote{Recall that $J+\ka  {\h^1\llcorner \dOm} = \pi \sum_{j=1}^N d_j \delta_{a_j}$.}
\begin{equation}\label{eq:fogc1}
\liminf_{\eps\to 0} \frac1{|\log\eps|}E_{\eps,\eta}(u_\eps) \ge\pi\sum_{j=1}^N |d_j|=\big|J+\ka  {{\h^1\llcorner \dOm}}\big|(\dOm).
\end{equation}
\end{itemize}
If we additionally  assume the following sharper bound:
\be
\label{eq:sharpenbd}
\limsup_{\eps\to 0} \bigl(E_{\eps,\eta}(u_\eps) - 
 \pi\sum_{j=1}^N |d_j||\log\eps| \bigr) <\infty,
\ee
then the following results hold:
\begin{itemize}
\item[iii)] \textbf{Single multiplicity and second order lower bound}.
The multiplicities satisfy $d_j=\pm 1$ for $1\leq j\leq N$, { so $\sum_{j=1}^N |d_j|=N$}  and 
{there holds the finer energy bound}
\be\label{eq:fogc2}
\liminf_{\eps\to 0} \bigl(E_{\eps,\eta}(u_\eps) - \pi {N} |\log\eps| \bigr) 
\ge W_\Omega({\{(a_j,d_j)\}}) + {\gamma_0}{N},
\ee
with $\gamma_0=\pi\log\frac{ e}{4\pi}$ a universal constant and $W_\Omega$ the renormalised
energy defined in \eqref{eq:renW}. 
\item[iv)] \textbf{Penalty bound.} 
The penalty terms {are of order $O(1)$, i.e., }
\begin{equation}\label{eq:thpenub}
\limsup_{\eps\to 0}\left( \frac1{\eta^2} \int_\Omega (1-|u_\eps|^2)^2\, dx + \frac1{2\pi \eps} \int_{\partial\Omega} 
(u_\eps \cdot \nu)^2 \, d\h^1 \right)<\infty.
\end{equation}
\item[v)] \textbf{Local energy lower bound.}
There are $\rho_0>0$, $\eps_0>0$ and $C>0$ such that 
the energy of $u_\eps$ near the singularities satisfies for all the {$\eps<\eps_0$ in the sequence / family } and $\rho<\rho_0$: 
\begin{equation}\label{eq:l2lob}
\left( \int_{\Omega\cap \bigcup_{j=1}^N B_\rho(a_j)} |\nabla u_\eps|^2 \, dx -\pi {N} \log \frac\rho\eps\right) > -C.
\end{equation}
\item[vi)] \textbf{$L^p(\Om)$-compactness of maps $u_\eps$.}
For any $q\in[1,2)$, 
the sequence /family $(u_\eps)_\eps$ is uniformly bounded in $W^{1,q}(\Omega;\R^2)$. Moreover, for a subsequence, $u_\eps$ converges as $\eps\to 0$ strongly in $L^p(\Omega;\R^2)$ for any $p\in [1, \infty)$ to 
$e^{ i\hat\phi}$, where  $\hat\phi\in W^{1,q}(\Omega)$ is an extension (not necessarily harmonic) to $\Omega$ of the lifting $\phi\in BV(\dOm)$ determined in point i).  
\end{itemize}
Finally, we have a matching \textbf{upper bound construction}:
\begin{itemize}
\item[vii)] Given any $N$ distinct points $a_j\in \partial \O$ with their multiplicity $d_j\in \ZZ\setminus \{0\}$
satisfying the constraint $\sum_{j=1}^N d_j=2$,
we can construct for every $\eps\in (0, \frac12)$, $u_\eps\in H^1(\Omega; \Ss^1)$ such that the global Jacobians $\jaco(u_\eps)$ converge to $J=-\ka \h^1\llcorner \dOm+\pi \sum_{j=1}^N d_j \delta_{a_j}$ as in \eqref{conv_lip}.
Furthermore, $u_\eps$ converge strongly  to  a canonical harmonic map $m_*$ associated to $\{(a_j,d_j)\}$ in $L^p(\Om)$ and $L^p(\dOm)$ for all $p\in[1,\infty)$.
 and the energies satisfy
\[
\lim_{\eps\to 0} \frac{1}{|\log\eps|}E_{\eps,\eta}(u_\eps) = \pi \sum_{j=1}^N |d_j|.
\]
If furthermore $|d_j|=1$ for all $j=1,\dots,N$, then $u_\eps$ can be chosen such that
\[
\lim_{\eps\to 0}  (E_{\eps,\eta}(u_\eps)-\pi N|\log\eps|) = W_\Omega(\{(a_j, d_j)\})+ N\gamma_0.
\]
\end{itemize}

\end{thm}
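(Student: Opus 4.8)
\emph{Overview.} The decisive feature to exploit is $|\log\eps|\ll|\log\eta|$: an interior vortex costs $\approx\pi|\log\eta|$, so under the standing bound $\eee(u_\eps)\lesssim|\log\eps|$ it would carry, after renormalization, infinite energy. Hence my plan is first to kill interior vorticity and reduce to a boundary problem carried by the boundary Jacobian, and then to run a Ginzburg--Landau--type lower/upper bound analysis localized at a finite set of boundary points, using the half-plane boundary-vortex profile as the local model. Throughout, the global Jacobian $\jaco(u_\eps)=2\jac(u_\eps)+\jacbd(u_\eps)$ will bookkeep the topology, its first term living in $\Om$ and its second on $\dOm$.

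\emph{Step 1 (reduction and compactness, (i)--(ii)).} I would first run a vortex-ball construction on $\int_\Om|\nabla u_\eps|^2+\frac1{\eta^2}\int_\Om(1-|u_\eps|^2)^2$ to bound $\jac(u_\eps)$ in the flat norm by $C(1+\eee(u_\eps)/|\log\eta|)$; since $|\log\eps|\ll|\log\eta|$ this forces $\jac(u_\eps)\to0$ and $|u_\eps|\to1$ in $L^2_{loc}(\Om)$, so $\jaco(u_\eps)$ and $\jacbd(u_\eps)$ coincide up to $o(1)$. A Jacobian estimate against Lipschitz test functions then gives compactness of $\jaco(u_\eps)$ in the dual norm of \eqref{conv_lip}, with limit $J$ supported on $\dOm$. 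To obtain the structure \eqref{newlab} I would localize near $\dOm$: straightening the boundary and comparing with the half-plane model, a local lower bound of the form ``energy in $B_\rho(a)\cap\Om$ $\ge\pi|d|\log(\rho/\eps)-C$'' holds wherever $\jacbd(u_\eps)$ concentrates mass $\pi d$; summing over a maximal family of such concentration points and comparing with $\eee(u_\eps)\le C|\log\eps|$ produces finitely many atoms $a_j$ with $d_j\in\ZZ\setminus\{0\}$, while the smooth part of $J$ is pinned to $-\ka\,\h^1\llcorner\dOm$ because the small boundary penalty $(u\cdot\nu)^2$ forces $u$ close to a unit tangent field $e^{i\phi}$ with $\partial_\tau\phi=\ka$. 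The $L^p(\dOm)$-convergence of the traces to $e^{i\phi}$ with $\partial_\tau\phi=-J$ would then follow from $BV$-compactness of the liftings, since $\frac1{2\pi\eps}\int_{\dOm}(u_\eps\cdot\nu)^2\lesssim|\log\eps|$ controls the transitions; summing the local lower bounds gives (ii).

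\emph{Step 2 (second order and single multiplicity, (iii)--(vi)).} Under the sharper bound \eqref{eq:sharpenbd}, I would fix small $\rho$ and split $\Om$ into $\bigcup_jB_\rho(a_j)\cap\Om$ and the remainder $\Om'_\rho$. On $\Om'_\rho$ the map $u_\eps$ is close to $\Ss^1$-valued with boundary trace converging to $e^{i\phi}$, so $\int_{\Om'_\rho}|\nabla u_\eps|^2\ge\int_{\Om'_\rho}|\nabla\phi_*|^2-o(1)=\pi N\log\tfrac1\rho+W_\Om(\{(a_j,d_j)\})+o_\rho(1)$, using Dirichlet-minimality of the canonical harmonic map (Theorem~\ref{pro:canm}) and the definition \eqref{eq:renW}. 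Near each $a_j$ I would prove a refined local lower bound by blowing up at scale $\eps$, straightening $\dOm$, and invoking minimality and uniqueness (up to symmetry) of the half-plane boundary-vortex profile, whose renormalized core energy is $\gamma_0=\pi\log\tfrac{e}{4\pi}$: this bounds the three terms of $\eee$ restricted to $B_\rho(a_j)\cap\Om$ below by $\pi d_j^2\log\tfrac\rho\eps+\gamma_0-o(1)$ when $a_j$ carries a single vortex, and in general by $\pi|d_j|\log\tfrac\rho\eps$ plus a positive interaction term that diverges if sub-vortices of total mass $|d_j|\ge2$ coalesce at $a_j$. Comparing with \eqref{eq:sharpenbd} forces $d_j=\pm1$, and then letting $\rho\to0$ yields \eqref{eq:fogc2}; the penalty bound (iv) and local bound (v) drop out of the same decomposition, while for (vi) the bounded total degree gives a uniform $W^{1,q}$ bound for $q<2$, a strong $L^p$ limit, and its identification with $e^{i\hat\phi}$ via Step 1.

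\emph{Step 3 (upper bound (vii) and main obstacle).} For the recovery sequence I would take the canonical harmonic map $m_*=e^{i\phi_*}$ associated to $\{(a_j,d_j)\}$, explicit via Theorem~\ref{pro:canm}, which is smooth and $\Ss^1$-valued in $\Om$ and tangent to $\dOm$ away from $\{a_j\}$; set $u_\eps=m_*$ outside fixed disks $B_\lambda(a_j)$, and inside $B_\lambda(a_j)$ interpolate in a thin annulus between $m_*$ and the $\eps$-rescaled optimal half-plane core profile supported near $a_j$. The interior penalty then lives in the cores and contributes $o(1)$ beyond $\gamma_0$; the boundary penalty concentrates in the cores and, together with the core Dirichlet energy, yields $N\gamma_0+o(1)$; the far-field Dirichlet energy is $\pi N\log\tfrac1\lambda+W_\Om+o(1)$ by \eqref{eq:renW}; summing gives $\eee(u_\eps)=\pi N|\log\eps|+W_\Om+N\gamma_0+o(1)$, and discarding the second order term recovers the first-order equality for general $d_j$, while the convergences of $\jaco(u_\eps)$ and of $u_\eps$ to $m_*$ are immediate. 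I expect the main obstacle to be the sharp local analysis at a boundary vortex: getting the \emph{exact} constant $\gamma_0$ and exponent $d_j^2$ needs an $\eps$-scale blow-up with quantitative control of the curvature error in straightening $\dOm$, an $\eps$-uniform localization of the $\tfrac1\eps$-boundary layer, and the classification of minimizers of the limiting half-plane functional (leaning on Toland~\cite{Toland:1997a} and Cabr\'e--Sol\`a-Morales~\cite{CabreSola-Mor:2005a}); coupled to this is the ``clean-up'' that the Jacobian defect genuinely concentrates at finitely many boundary points and that $u_\eps$ is energetically well-behaved on their complement, which underpins both the quantization $d_j=\pm1$ and the passage to the renormalized energy.
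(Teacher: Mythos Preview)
This theorem is not proved in the present paper at all: it is quoted verbatim from the companion article \cite{IK_jac} (as the combination of Theorems~1.2, 1.4 and 1.5 there), and the paper simply invokes it as a black box in Section~\ref{sec:sec3}. There is therefore no proof here to compare your proposal against.

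That said, your outline is a reasonable and essentially standard strategy for this type of result, and it matches in broad strokes what one expects the argument in \cite{IK_jac} to be (building on Kurzke's earlier work \cite{Kurzke:2006a,Kurzke:2006b}): Jerrard--Sandier type ball constructions to kill interior topology via $|\log\eps|\ll|\log\eta|$, localization and blow-up near boundary concentration points to the half-plane model, and a decomposition into far-field renormalized energy plus core contributions. One small slip in your Step~3: since you take $u_\eps\in H^1(\Omega;\Ss^1)$, the interior penalty $\frac1{\eta^2}\int(1-|u_\eps|^2)^2$ is identically zero, not ``$o(1)$ beyond $\gamma_0$''; the constant $\gamma_0$ comes entirely from the competition between the Dirichlet energy and the \emph{boundary} penalty in the half-plane core. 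Also, your quantization argument in Step~2 is stated a bit loosely: the mechanism that forces $|d_j|=1$ is not that a ``positive interaction term diverges'' at a single point, but rather that a multiplicity-$|d|$ vortex costs at least $\pi|d|^2\log\frac\rho\eps$ in the sharp local bound (via the $\Ss^1$-approximation and degree argument), which for $|d|\ge 2$ exceeds the budget $\pi|d|\,|\log\eps|+O(1)$ allowed by \eqref{eq:sharpenbd}.
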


Using Theorem \ref{thm:gammacforee}, we first show a uniform lower bound for $\eeeh$ that is required in the proof of Lemma~\ref{lem:l1} (which we used for proving Theorem~\ref{thm:lowerb}). The assumption that $\Omega$ is simply connected (or at least not topologically an annulus, compare Remark \ref{rem:multiconn}) is very important. As the result is only about $\eeeh$, our reasoning to establish Theorem~\ref{thm:lowerb} is not circular.
\begin{lem}\label{lem:twopiboundforaven}
Let $\bfm_h\in H^1(\Omega;\R^3)$ with $|\bfm_h|\le 1$ in $\Omega$. Then 
\[
\liminf_{h\to 0} \eeeh(\bfm_h) \ge 2\pi.
\]
\end{lem}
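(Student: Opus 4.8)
The plan is to bound $\eeeh$ from below by the $2D$ Ginzburg--Landau type functional $E_{\eps,\eta}$ studied in \cite{IK_jac}, evaluated on the in-plane component of $\bfm_h$, and then to invoke the first-order lower bound of Theorem~\ref{thm:gammacforee} together with the topological constraint $\sum_j d_j=2$; this last ingredient is where the simple connectedness of $\Omega$ enters decisively.

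First I would write $\bfm_h=(m_h,m_{h,3})$ with $m_h=(\bfm_h\cdot e_1,\bfm_h\cdot e_2)\in H^1(\Omega;\R^2)$. Since $|m_h|\le|\bfm_h|\le 1$ pointwise in $\Omega$, one has $1-|m_h|^2\ge(1-|m_h|^2)^2$, and trivially $|\nabla\bfm_h|^2\ge|\nabla m_h|^2$; hence, exactly as in the derivation of \eqref{eundikjace},
\[
\eeeh(\bfm_h)\ \ge\ \frac1{|\log\eps|}\,E_{\eps,\eta}(m_h).
\]

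Then I would argue along a subsequence realizing the $\liminf$. Set $\ell:=\liminf_{h\to 0}\eeeh(\bfm_h)$; if $\ell=+\infty$ there is nothing to prove, so assume $\ell<\infty$ and pick $h_k\to 0$ with $\eeeh(\bfm_{h_k})\to\ell$. Then $\sup_k\frac1{|\log\eps_k|}E_{\eps_k,\eta_k}(m_{h_k})\le\sup_k\eeeh(\bfm_{h_k})<\infty$, and since $|\log\eps|\ll|\log\eta|$ (a consequence of the regime \eqref{regime}), Theorem~\ref{thm:gammacforee}(i) applies to $(m_{h_k})_k$: after passing to a further subsequence (not relabeled), the global Jacobians $\jaco(m_{h_k})$ converge in the sense of \eqref{conv_lip} to a measure $J$ of the form \eqref{newlab}, namely $J=-\ka\,\h^1\llcorner\dOm+\pi\sum_{j=1}^N d_j\delta_{a_j}$ with $N\ge 1$ distinct points $a_j\in\dOm$, multiplicities $d_j\in\ZZ\setminus\{0\}$, and $\sum_{j=1}^N d_j=2$. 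Theorem~\ref{thm:gammacforee}(ii) then yields
\[
\liminf_{k\to\infty}\frac1{|\log\eps_k|}E_{\eps_k,\eta_k}(m_{h_k})\ \ge\ \pi\sum_{j=1}^N|d_j|.
\]

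Finally I would close the estimate using the constraint: since each $|d_j|\ge 1$ and $\sum_j d_j=2$, we get $\sum_{j=1}^N|d_j|\ge\big|\sum_{j=1}^N d_j\big|=2$, whence $\pi\sum_j|d_j|\ge 2\pi$, and combining the three displays,
\[
\ell=\lim_{k\to\infty}\eeeh(\bfm_{h_k})\ \ge\ \liminf_{k\to\infty}\frac1{|\log\eps_k|}E_{\eps_k,\eta_k}(m_{h_k})\ \ge\ 2\pi.
\]
There is no real obstacle here once Theorem~\ref{thm:gammacforee} is in hand; the one ingredient that genuinely uses the hypothesis on $\Omega$ is the constraint $\sum_j d_j=2$. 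It comes from letting $\zeta\equiv 1$ in \eqref{conv_lip}, which gives $\langle J,1\rangle=\lim_k\langle\jaco(m_{h_k}),1\rangle=0$, together with the Gauss--Bonnet identity $\int_{\dOm}\ka\,d\h^1=2\pi$ valid for a simply connected $\Omega$; these force $\pi\sum_j d_j=2\pi$, and in particular $N\ge 1$, so $\sum_j|d_j|\ge 2$. For a domain topologically an annulus one has $\int_{\dOm}\ka=0$, the configuration $N=0$ (i.e.\ $J=-\ka\,\h^1\llcorner\dOm$) becomes admissible, and the bound degenerates to $0$ --- which is exactly why simple connectedness cannot be dropped. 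All of this is already built into the structure statement \eqref{newlab}, so in practice the argument reduces to the bookkeeping above.
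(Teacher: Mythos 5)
Your proposal is correct and follows essentially the same route as the paper's own proof: bound $\eeeh(\bfm_h)$ below by $\frac1{|\log\eps|}E_{\eps,\eta}(m_h)$, pass to a subsequence realizing the $\liminf$ (finite without loss of generality), apply the compactness and first-order lower bound of Theorem~\ref{thm:gammacforee}, and conclude from $\pi\sum_j|d_j|\ge\pi\bigl|\sum_j d_j\bigr|=2\pi$. Your closing remarks on where simple connectedness enters (via $\langle J,1\rangle=0$ and Gauss--Bonnet) match the paper's footnoted discussion and add nothing problematic.
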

\begin{proof}{}
We denote $\bfm_h=(m_h, m_{3,h})$. As $\eeeh(\bfm_h)\geq \eeeh(m_h,0)$, we may assume $\bfm_h = (m_h,0)$ and $\limsup_{h\to 0} \eeeh(\bfm_h) \le C$.
Writing $\eps=\eps(h)$, $\eta=\eta(h)$ and $u_\eps:=m_h\in H^1(\Omega;\R^2)$, we have from \eqref{eq:eeeheee} that 
$\eeeh(\bfm_h) \ge \frac1{|\log\eps|} \eee(u_\eps)$. {We furthermore restrict ourselves to a subsequence $\eps\to 0$ such that $\eee(u_\eps)/|\log \eps|\to \liminf_{\eps\to 0} \eee(u_\eps)/|\log \eps|$.} Hence we can apply Theorem~\ref{thm:gammacforee} and obtain for a further subsequence that 
the global Jacobians $\jaco(u_\eps)$ converge
  to a measure $J$ supported on the boundary $\partial\O$ 
of the form $J=-\ka \h^1\llcorner \dOm+\pi \sum_{j=1}^N d_j \delta_{a_j}$, $a_j \in \partial\O$, 
$d_j\in\ZZ\setminus \{0\}$, $\sum_{j=1}^N d_j=2$; moreover, Theorem~\ref{thm:gammacforee} yields 
\[
\liminf_{\eps\to 0} \frac1{|\log\eps|}E_{\eps,\eta}(u_\eps) \ge\pi\sum_{j=1}^N |d_j| \ge \pi \bigg|\sum_{j=1}^N d_j\bigg| = 2\pi,
\]
which proves the claim. \qed
\end{proof}

\bigskip

We are now in the position to prove  Theorem~\ref{thm:t1}.

\bigskip

\begin{proof}{ of Theorem~\ref{thm:t1}}
We start by proving the compactness statement in $(i)$.
For that, we assume $\bfm_h:\Bfom_h\to \Ss^2$ is a sequence of 
magnetizations such that $\sup_h E_h(\bfm_h)\le C$ and we set
$\bm =(\bar m_h, \bar m_{h,3})$ the average magnetization in \eqref{eq:averageofm}.
By Theorem~\ref{thm:lowerb} and \eqref{eq:eeeheee}, we have 
\[C\geq \limsup_{h\to 0} \eeeh(\bm)\geq \limsup_{\eps\to 0} \frac1{|\log\eps|}\eee(\bar m_h).\]
Note that \eqref{regime} implies $|\log \eps|\ll \frac1\eps\ll |\log \eta|$. Thus, the compactness
statement of 
Theorem~\ref{thm:gammacforee} applies to $u_\eps:=\bar m_h$ yielding
the relative compactness of $\jaco(\bar m_h)$ as well as the $L^p(\dOm)$-relative compactness of the traces $\bar m_h\bigg|_{\dOm}$ for every $p\in [1, \infty)$; as every limit of this sequence takes values in $\Ss^1$ on $\dOm$ and
$\bar m_{h,3}^2\leq 1-|\bar m_h|^2$, it yields $\bar m_{h,3}\to 0$ in $L^2(\dOm)$. As $|\bar m_{h,3}|\leq 1$, we conclude for a subsequence,  $\bar m_{h,3}\to 0$ in $L^p(\dOm)$  for every $p\in [1, \infty)$ which proves $(i)$.

From \eqref{eq:fogc1} and Theorem \ref{thm:lowerb}, we establish $(ii)$:
$$\liminf_{h\to 0} E_h(\bfm_h) \geq \liminf_{h\to 0} \eeeh(\bm)\geq  \liminf_{\eps\to 0} \frac1{|\log\eps|}\eee(\bar m_h)\geq \pi\sum_{j=1}^N |d_j|.$$
 Moreover, in the regime \eqref{regime2}, the assumption
\(
\limsup_{h\to 0} |\log \eps| (E_h(\bfm_h) - \pi\sum_{j=1}^N |d_j|) \le C, 
\) together with the improved estimate \eqref{eq:improvedest} of Theorem~\ref{thm:lowerb} yields
\eqref{eq:sharpenbd}; thus, by Theorem~\ref{thm:gammacforee}, we have $|d_j|=1$ for every $1\leq j\leq N$ and together with \eqref{eq:fogc2}, the statement $(iii)$ also follows. 

To prove $(iv)$, note that \eqref{eq:l2lob} together with Jensen's inequality imply for $\eps<\eps_0$ and $\rho<\rho_0$:
\[
\frac1h \int_{\left(\Om\cap\bigcup_{j=1}^N B_\rho(a_j)\right)\times(0,h)}  |\bfa \bfm_h|^2 \, d\bfx \ge  \int_{\Om\cap\bigcup_{j=1}^N B_\rho(a_j)}  |\nabla \bar m_h|^2 \, dx \ge \pi N \log\frac\rho\eps-C.
\]
Combined with the upper bound for the energy
\[
\frac1h \int_{\Bfom_h}  |\bfa \bfm_h|^2 \, d\bfx\leq |\log \eps| E_h(\bfm_h) \le \pi N |\log\eps| + C,
\]
we obtain for every $\rho<\rho_0$:
\[
\frac1h \int_{\left(\Om\setminus\bigcup_{j=1}^N B_\rho(a_j)\right)\times(0,h)}  |\bfa \bfm_h|^2 \, d\bfx \le C(1+\log\frac1\rho).
\]
We now use the argument of Struwe \cite{Struwe:1994a} to obtain $L^q$ bounds for $\bfa \bfm_h$ for every $q\in [1,2)$ (see also \cite[Lemma 4.17]{IK_jac}). Set 
$\bfom_s=(\Om\setminus \bigcup_{j=1}^N B_s(a_j))\times(0,h)$. Then for all $s$, $|\bfom_{s}\setminus\bfom_{2s}|\le  Cs^2h$.
With  $s_k=2^{1-k}\rho_0$, $k=1,2,\dots$, we find, adjusting the constant, that 
\be\label{eq:zhege}
\int_{\bfom_{s_k}} |\bfa \bfm_h|^2 \, d\bfx \le C h k.
\ee
We compute using H\"older's inequality and \eqref{eq:zhege}:
\begin{align*}
\int_{\Bfom_h} |\bfa \bfm_h|^q \, d\bfx &= \int_{\bfom_{s_1}}  |\bfa \bfm_h|^q \, d\bfx + \sum_{k=1}^\infty \int_{\bfom_{s_{k+1}}\setminus \bfom_{s_k}}  |\bfa \bfm_h|^q \, d\bfx \\
&\le Ch + \sum_{k=1}^\infty \left(\int_{\bfom_{s_{k+1}}\setminus \bfom_{s_k}}  |\bfa \bfm_h|^2 \, d\bfx\right)^{\frac q2} 
\left|{\bfom_{s_{k+1}}\setminus \bfom_{s_k}}\right|^{1-\frac q2} \\
&\le Ch + C\sum_{k=1}^\infty (kh)^{\frac q 2} \left(2^{-2k} h\right)^{1-\frac q 2}
\le Ch + Ch \sum_{k=1}^\infty k^{\frac q2}2^{-k(2-q)}\le C(q)h,
\end{align*}
where the above infinite sum is convergent since we assumed $1\leq q<2$. 
By 
rescaling $\tilde \bfm_h(x,x_3)=\bfm_h(x,hx_3)$ with $\tilde \bfm_h=(\tilde m_h, \tilde m_{h,3}):\Bfom_1\to \Ss^2$, we have for $h<1$ and $1\leq q<2$:
\be
\label{555}
\|\bfa \tilde\bfm_h\|^q_{L^q(\Bfom_1)}\le \int_{\Bfom_1} |\nabla \tilde \bfm_h|^q+\frac{|\partial_{x_3} \tilde \bfm_h|^q}{h^q}\, d\bfx=\frac1h \int_{\Bfom_h} |\bfa \bfm_h|^q \, d\bfx \leq  C(q).
\ee 
By compact embedding of $W^{1,q}(\Bfom_1) \subset L^{2}(\Bfom_1)$ for $q\in (1,2)$, we obtain after extraction the
 strong convergence $\tilde\bfm_h\to \tilde\bfm$ in $L^2(\Bfom_1)$ to a limit $\tilde\bfm\in W^{1,q}(\Bfom_1)$. For a further subsequence, we can assume pointwise convergence almost 
 everywhere, so we have $|\tilde\bfm|=1$ almost everywhere as $|\tilde \bfm_h|=1$. Moreover, we deduce
 the strong convergence $\tilde\bfm_h\to \tilde\bfm$  in $L^p(\Bfom_1)$ for all $1\le p<\infty$. 
 By \eqref{555}, we also get $\|\partial_{x_3} \tilde\bfm_h\|_{L^q}\to 0$ for every $q\in [1,2)$,  so $\partial_{x_3}
\tilde  \bfm=0$ in $\Bfom_1$. Hence $\tilde\bfm$ is equal to its $x_3$-average, i.e., $\tilde \bfm=\tilde \bfm(x):\Omega\to \Ss^2$ (which obviously is the $L^p(\Omega)$-limit of the $x_3$-average of $\tilde\bfm_h$ since $\tilde\bfm_h\to \tilde\bfm$  in $L^p(\Bfom_1)$ for all $1\le p<\infty$). Note that the $x_3$-average of $\tilde\bfm_h$ coincides with $\bm$ in $\Omega$, so the $L^p(\Omega)$-limit of $\bm$ is $\tilde \bfm$. By point $vi)$ in Theorem \ref{thm:gammacforee} on $L^p(\Omega)$-limiting behaviour of $u_\eps=\bar m_h$, we then deduce that the limit $\tilde\bfm$ has the form $(\tilde m,0)$ with $\tilde m:\Omega\to \Ss^1$. Moreover, Jensen's inequality applied in \eqref{555} yields $\nabla\bar m_h$ is bounded\footnote{This is also a consequence of point $vi)$ in Theorem \ref{thm:gammacforee}.} in $L^q(\Omega)$ for every $q\in [1,2)$. Then
the strong $L^{3}(\Omega)$-convergence of $\bar m_h$ and weak $L^{3/2}(\Omega)$-convergence of $\nabla\bar m_h$  imply that 
$$-\left< \jaco(\bar m_h), \zeta\right>=\int_{\Omega} \bar m_h \times \nabla \bar m_h \cdot \nabla^\perp \zeta\, dx
\to \int_{\Omega} \tilde m \times \nabla \tilde m \cdot \nabla^\perp \zeta\, dx=-\left< \jaco(\tilde m), \zeta\right>, $$
for every $\zeta\in W^{1,\infty}(\Om)$. By point $(i)$, we conclude that $\jaco(\tilde m)=J$ as claimed.
\qed
\end{proof}
\bigskip

We now show the corresponding upper bound:
\begin{proof}{ of Theorem~\ref{thm:t1UB}}
For $j=1,\dots, N$ let
 $a_j\in \dOm$ be distinct points and $d_j\in\ZZ\setminus \{0\}$ such that $\sum_{j=1}^N d_j=2$.
Let $u_\eps:\Om\to \Ss^1$ be chosen as in part $vii)$ of Theorem~\ref{thm:gammacforee}. 
Set $\bfM_h(x,x_3)=(u_\eps(x),0)\in \Ss^1\times \{0\}$ for $0<x_3<h$ and $x\in \Omega$. Clearly $\bfM_h$ satisfies (i).
As $\bfM_h$ is $x_3$-independent, then $\bfM_h$ coincides with its average $\bar \bfM_h$ in \eqref{eq:averageofm} with the in-plane components $M_h=u_\eps$ in $\Omega$. As $|u_\eps|=1$ in $\Omega$ and $\bar\bfM_h$ is in-plane in $\Om$, \eqref{eq:eeeheee2} yields $\eeeh(\bar\bfM_h)=\frac{1}{|\log\eps|} \eee(u_\eps)$. By the choice of $u_\eps$, $\jaco(M_h)=\jaco(u_\eps)\to J$ in the sense of \eqref{conv_lip}  and we also have the desired convergence of $M_h$ to $m_*$. 
By Theorem \ref{thm:lowerb} and the $x_3$-independence of $\bfM_h$, the choice of $u_\eps$ implies in the regime \eqref{regime}:
\[
E_h(\bfM_h)=\eeeh(\bar\bfM_h)+o(1)= \frac1{|\log\eps|}\eee(u_\eps)+ o(1)\to \pi \sum_{j=1}^N |d_j|\quad \textrm{as }\, h\to 0. \]
This is (ii). To show (iii), we just note that if additionally $|d_j|=1$ for all $j$, the above argument yields in the regime \eqref{regime2} via part $vii)$ in Theorem~\ref{thm:gammacforee}:
\[
\lim_{h\to 0} |\log\eps| (E_h(\bfM_h)-\pi N) =\lim_{h\to 0}  |\log\eps| \bigg(\eeeh(\bar\bfM_h)+o(\frac1{|\log \eps|})-\pi N\bigg)=W_\Omega(\{(a_j,d_j)\})+ N\gamma_0.
\]
\qed
\end{proof}

\section{Canonical harmonic maps and the renormalized energy}
\label{sec:ren}
In this section we compute the renormalized energy defined in \eqref{eq:renW} and prove the existence of minimisers in the situation of two boundary vortices of multiplicity $1$. 
First, we compute the renormalized energy in terms of a solution to a Neumann problem (see \eqref{eq:VisNeum}) which is similar to the method of Bethuel-Brezis-H\'elein \cite{BethuelBrezisHelein:1994a}
(compare also \cite[Proposition 7.1]{Kurzke:2006b}). Second, we prove Theorem \ref{pro:ree}, i.e.,  an exact formula in the situation of a disk domain that we transfer afterwards to a general bounded $C^{1,1}$ simply connected domain via a conformal map.

We start by proving the following formula of the renormalized energy proving in particular that the limit in \eqref{eq:renW} exists:
\begin{pro} 
\label{pro:ren_BBH}
Let $\Omega\subset \RR^2$ be a  simply connected $C^{1,1}$ domain with outer unit normal field $\nu$ and let  $\ka$ denote the curvature of the boundary $\dOm$. 
We consider $N\geq 2$ distinct points $a_j\in \dOm$ carrying the multiplicities $d_j\in \{\pm 1\}$ with $\sum_{j=1}^N d_j=2$. Then the limit in \eqref{eq:renW} exists and the renormalized energy satisfies
\be
\label{ren_1}
W_\O(\{(a_j,d_j)\}) = -\pi \sum_{k\neq j} d_k d_j \log |a_k-a_j| 
-\int_{\partial\O} \psi \ka  \,d\h^1 + \pi \sum_{j=1}^N d_j R(a_j),
\ee
where $\psi$ denotes the unique (up to an additive constant) solution  in $W^{1,q}(\Om)$ for every $q\in [1,2)$ of the inhomogeneous Neumann problem
\be
  \label{eq:VisNeum}
\begin{cases} 
 \Delta \psi &= 0 \qquad\text{in $\Omega$} \\
  \frac{\partial \psi}{\partial\nu} & = -\ka+\pi\sum_{j=1}^N d_j \delta_{a_j} \quad\text{on $\partial\Om$},
\end{cases}
\ee
and $R$ is the harmonic function given by
$R(z)=\psi(z)+\sum_{j=1}^N d_j \log|z-a_j|$.  It satisfies $R\in C^{0, \alpha}(\overline{\Omega})$ for every $\alpha\in (0,1)$ and  $R\in W^{s,p}(\Om)$ for every $p\in [1,\infty)$ and $s\in [1, 1+\frac1p)$.

\end{pro}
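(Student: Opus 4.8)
The plan is to identify $\psi$ with a harmonic conjugate of the lifting $\phi_*$ and then run a Bethuel--Brezis--H\'elein type ``shrinking holes'' computation. First I would establish the Neumann function $\psi$ and its local structure. Since $\sum_j d_j=2$ and $\int_{\dOm}\ka\,d\h^1=2\pi$ by Gauss--Bonnet, the datum $-\ka+\pi\sum_j d_j\delta_{a_j}$ has zero total mass, so \eqref{eq:VisNeum} is solvable, uniquely up to an additive constant since $\Omega$ is connected. The key local observation is that near a fixed $a_j$ the Neumann datum of $\psi$ agrees, up to a function bounded near $a_j$, with that of $-d_j\log|z-a_j|$: in the local half-plane model $\partial_\nu\log|\cdot-a_j|=-\pi\delta_{a_j}$ plus a term controlled by the $C^{1,1}$ curvature of $\dOm$. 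Hence $R:=\psi+\sum_k d_k\log|\cdot-a_k|$ is harmonic in $\Omega$ with $\partial_\nu R\in L^\infty(\dOm)$ (the Dirac masses cancel), and $R_j:=R-\sum_{k\neq j}d_k\log|\cdot-a_k|=\psi+d_j\log|\cdot-a_j|$ is harmonic and smooth up to $\dOm$ near $a_j$, with $R_j(a_j)=R(a_j)-\sum_{k\neq j}d_k\log|a_j-a_k|$. The stated regularity ($\psi\in W^{1,q}$ for $q<2$, $R\in C^{0,\alpha}(\overline\Omega)$ and $R\in W^{s,p}$ for $s<1+\tfrac1p$) then follows from the explicit logarithmic singularities ($\nabla\log|\cdot-a_k|\in L^q$ for $q<2$) together with standard elliptic Neumann estimates on the $C^{1,1}$ domain and Sobolev embedding.

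Next I would show $|\nabla\phi_*|=|\nabla\psi|$. Because $\Omega$ is simply connected, $\phi_*$ admits a single-valued harmonic conjugate $\tilde\phi_*$ with $\nabla\tilde\phi_*=\nabla^\perp\phi_*$; its Neumann trace is $\partial_\nu\tilde\phi_*=\nabla^\perp\phi_*\cdot\nu=-\partial_\tau\phi_*=-\ka+\pi\sum_j d_j\delta_{a_j}$ on $\dOm$, which is exactly \eqref{eq:VisNeum}, so $\psi=\tilde\phi_*+\mathrm{const}$ and $|\nabla\phi_*|=|\nabla^\perp\phi_*|=|\nabla\psi|$ pointwise in $\Omega$; hence $\int_{\Omega\setminus\bigcup_j B_\rho(a_j)}|\nabla\phi_*|^2\,dx=\int_{\Omega\setminus\bigcup_j B_\rho(a_j)}|\nabla\psi|^2\,dx$.

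For the computation, write $\Omega_\rho:=\Omega\setminus\bigcup_j B_\rho(a_j)$ and integrate by parts (legitimate: $\psi$ is harmonic and, away from the $a_j$, regular up to the boundary), obtaining $\int_{\Omega_\rho}|\nabla\psi|^2\,dx=-\int_{\dOm\setminus\bigcup_j B_\rho(a_j)}\psi\ka\,d\h^1+\sum_j\int_{\partial B_\rho(a_j)\cap\Omega}\psi\,\partial_\nu\psi\,d\h^1$. The first term converges to $-\int_{\dOm}\psi\ka\,d\h^1$ (the integrand is $L^1$ near each $a_j$ since $\psi\sim-d_j\log|\cdot-a_j|$ there). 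On the arc $\partial B_\rho(a_j)\cap\Omega$, of length $\pi\rho+O(\rho^2)$, insert $\psi=-d_j\log\rho+R_j$ and $\partial_\nu\psi=d_j/\rho-\partial_r R_j$ with $R_j,\nabla R_j$ bounded: the leading contribution is $\pi d_j^2\log\frac1\rho$, the cross term $\frac{d_j}{\rho}\int_{\partial B_\rho(a_j)\cap\Omega}R_j\,d\h^1$ tends to $\pi d_j R_j(a_j)$, and the remaining terms are $O(\rho\log\frac1\rho)$. Since $d_j^2=1$, letting $\rho\to0$ shows the limit in \eqref{eq:renW} exists and equals $-\int_{\dOm}\psi\ka\,d\h^1+\pi\sum_j d_j R_j(a_j)$; substituting $R_j(a_j)=R(a_j)-\sum_{k\neq j}d_k\log|a_j-a_k|$ and using $\sum_j\sum_{k\neq j}d_j d_k\log|a_j-a_k|=\sum_{k\neq j}d_k d_j\log|a_k-a_j|$ yields exactly \eqref{ren_1}.

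The main obstacle is the first part: pinning down the exact local structure of $\psi$ at the boundary vortices (so that precisely $-d_j\log|z-a_j|$ must be removed to leave a remainder smooth up to $\dOm$) and the global $W^{1,q}$ / $C^{0,\alpha}$ / $W^{s,p}$ regularity of $\psi$ and $R$ on the merely $C^{1,1}$ domain — this is where the half-plane model for the Neumann kernel and the elliptic boundary estimates have to be handled with care. Everything else is routine BBH bookkeeping, the only geometric subtlety being the $\pi\rho+O(\rho^2)$ length of the arcs $\partial B_\rho(a_j)\cap\Omega$.
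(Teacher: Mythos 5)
Your overall strategy is the same as the paper's (identify $\psi$ with a harmonic conjugate of $\phi_*$, then a BBH shrinking-holes computation on $\Omega_\rho$), and most of the bookkeeping is right, but there is one genuine gap: the claim that $R_j=\psi+d_j\log|\cdot-a_j|$ is ``smooth up to $\dOm$ near $a_j$'' with $R_j,\nabla R_j$ bounded. This does not follow on a merely $C^{1,1}$ domain. Near $a_j$ the Neumann datum of $R_j$ is $-\ka+d_jQ_j$, where $\ka$ and the correction $Q_j$ are only $L^\infty$ (not continuous), and a harmonic function with $L^\infty$ Neumann data on a $C^{1,1}$ boundary has gradient in BMO-type spaces but in general \emph{not} in $L^\infty$ (logarithmic blow-up is possible; think of the conjugate of the Poisson extension of a jump). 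This is exactly why the statement only asserts $R\in W^{s,p}$ for $s<1+\frac1p$ and $R\in C^{0,\alpha}$, which does not embed into $W^{1,\infty}$. Consequently your estimates of the arc terms that involve $\partial_\rho R_j$ — namely $\int_{\partial B_\rho(a_j)\cap\Omega} R_j\,\partial_\rho R_j\,d\h^1$ and $\log\frac1\rho\int_{\partial B_\rho(a_j)\cap\Omega}\partial_\rho R_j\,d\h^1$, which you dismiss as $O(\rho\log\frac1\rho)$ — are not justified as written. (The cross term $\frac{d_j}\rho\int R_j\to\pi d_jR_j(a_j)$ is fine, since it only needs continuity of $R_j$ at $a_j$.)

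The paper circumvents precisely this point: it never uses a pointwise bound on $\nabla R$, but instead integrates by parts over $B_\rho(a_j)\cap\Omega$, writing
$\int_{\partial B_\rho(a_j)\cap\Omega} R\,\frac{\partial R}{\partial\rho}\,d\h^1=\int_{B_\rho(a_j)\cap\Omega}|\nabla R|^2dx-\int_{B_\rho(a_j)\cap\dOm}R\,\frac{\partial R}{\partial\nu}\,d\h^1\to0$,
using only $R\in H^1(\Om)\cap C^0(\overline\Om)$ and $\frac{\partial R}{\partial\nu}\in L^\infty(\dOm)$, and the same trick with the test functions $\zeta=|\log\rho|$ and $\zeta=\log|\cdot-a_k|$, $k\neq j$. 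You should either adopt this argument or prove a quantitative (e.g.\ logarithmic) gradient bound for $R_j$ near $a_j$; asserting smoothness up to the boundary is not available under the stated $C^{1,1}$ hypothesis. Two smaller points: existence of $\psi$ for the Dirac-mass Neumann data should be obtained constructively (the paper takes $\psi$ as a conjugate of the harmonic extension $\phi_*$ of the $BV\cap L^\infty$ lifting, which simultaneously yields the $W^{1,q}$, $q<2$, regularity), and uniqueness within $W^{1,q}$ requires a duality argument, since one cannot test \eqref{eq:VisNeum} directly with functions of the same low regularity.
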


\begin{rem}
\label{rem4}
 While  $\psi$ is determined only up to an additive constant (idem for $R$), the above formula \eqref{ren_1} is independent of that constant due to the Gauss-Bonnet formula and the constraint $\sum d_j=2$. 
\end{rem}

\begin{rem}
In Definition~\ref{defi:renen}, we asked for  $e^{\ci\phi} \cdot \nu=0$. We can replace the unit vector field $\nu$ (of degree $1$) that we want $e^{\ci\phi}$
to be perpendicular to by a more general unit vector field $V$, compare \cite[Remark 1.2]{IK_jac} and \cite[Proposition 7.1]{Kurzke:2006b}.
\end{rem}

\begin{proof}{} 
First, we show that \eqref{eq:VisNeum} admits a unique solution (up to an additive constant) in $W^{1,q}(\Om)$ for every $q\in [1,2)$. Indeed, the existence result 
is a direct consequence of the fact that any conjugate harmonic function $\psi$ of $\phi_*$ introduced in 
Definition~\ref{defi:can_map} satisfies \eqref{eq:VisNeum}. Such a function $\phi_*$ exists since $-\int_{\partial\O} \ka d\h^1+\pi\sum_{j=1}^N d_j =0$ (by the Gauss-Bonnet theorem and the constraint $\sum_{j=1}^N d_j=2$); moreover, $\phi_*\in W^{s+\frac1p, p}(\Omega)$ for every $s\in (0,1)$ and $p\in (1, \frac1s)$ (because of the trace theorem and the fact that $\phi_*\big|_{\dOm}\in BV\cap L^\infty(\dOm)\subset W^{s, p}(\Omega)$ for every $s\in (0,1)$ and $p\in [1, \frac1s)$ by Sobolev embedding), in particular, $\phi_*\in W^{1,q}(\Om)$ for every $q\in [1,2)$; thus, a conjugate harmonic function $\psi$ of $\phi_*$ (i.e., $\phi_*+\ci \psi$ is holomorphic in $\Omega$) satisfies \eqref{eq:VisNeum} (via the Cauchy-Riemann equations)
and $\psi\in W^{1,q}(\Om)$ for every $q\in [1,2)$. The uniqueness (up to a constant) of $\psi \in W^{1,q}(\Om)$ for $q\in [1,2)$ is proved as follows\footnote{A second proof can be given using the uniqueness of the conjugate function $\phi$ (up to an additive function) explained in Remark \ref{rem-uniq}.}: if $\tilde \psi\in W^{1,q}$ is another solution of \eqref{eq:VisNeum}, then the difference $\hat \psi=\psi-\tilde \psi\in W^{1,q}(\Om)$ for $q\in [1,2)$ satisfies $\int_\Omega \nabla \hat \psi \cdot \nabla \zeta\, dx=0$ for every $\zeta \in W^{1,p}(\Om)$ for $p>2$. Up to subtracting a constant, we may assume that $\int_\Om \hat \psi\, dx=0$; letting $\zeta\in W^{3,q}(\Om)$ be a solution of $\Delta \zeta=\hat \psi$ in $\Om$ and $\frac{\partial \zeta}{\partial\nu}=0$ on $\dOm$, integration by parts yields 
$$\int_\Om \hat \psi^2\, dx=\int_\Om \hat \psi \Delta \zeta\, dx=-\int_\Om \nabla \hat \psi \cdot \nabla \zeta\, dx+\int_{\dOm} \hat \psi \frac{\partial \zeta}{\partial\nu}\, d\h^1=0,$$ 
i.e., $\hat \psi=0$ in $\Om$.

We now study the function $R(z)=\psi(z)+\sum_{j=1}^N d_j \log|z-a_j|$ defined for $z\in \Om$. The above properties of $\psi$ yield $R$ is harmonic in $\Om$ and $R\in W^{1,q}(\Om)$ for every $q\in [1,2)$. In fact, $R$ has better regularity. Indeed, note that $\frac{\partial}{\partial\nu} \log|z-a_j|=-\pi \delta_{a_j}+Q_j(z)$ for $z\in \dOm$ where $Q_j\in L^\infty(\dOm)$.\footnote{For $\dOm$, we use the counterclockwise arc-length parametrization $\gamma\in C^{1,1}$ with $|\gamma'|=1$ and $\gamma(0)=a_j$. Writing $\tau(t)=\gamma'(t)$ and $\nu(t)=-\gamma'(t)^\perp$, then 
$\frac{\partial \log|z-a_j|}{\partial\nu}\big|_{z=\gamma(t)}=\frac{(\gamma(t)-a_j)\cdot \nu(t)}{|\gamma(t)-a_j|^2}=:Q_j(\gamma(t))$ for every $t\neq 0$. This is a bounded function because $\gamma(t)-a_j=t\big(\tau(t)+O(t)\big)$ and $(\gamma(t)-a_j)\cdot \nu(t)=\int_0^t (\gamma(s)-a_j)\cdot \nu'(s)\, ds=\pm\int_0^t s (1+O(s)) \ka(s)\, ds=O(t^2).$}  Thus, by \eqref{eq:VisNeum}, we compute $\frac{\partial}{\partial\nu} R=-\ka+\sum_{j=1}^N d_j Q_j\in L^\infty(\dOm)$. Standard elliptic theory\footnote{One can deduce this regularity by using a conjugate harmonic function for $R$.} implies $R\in W^{s,p}(\Om)$ for every $p\in [1,\infty)$ and $s\in [1, 1+\frac1p)$, in particular, $R\in H^1(\Om)$ and $R\in C^{0, \alpha}(\bar \Om)$ for every $\alpha\in (0,1)$ by Sobolev embedding.

Finally, we prove \eqref{ren_1}. On $\Omega_\rho=\Omega\setminus \bigcup_{j=1}^N B_\rho(a_j)$ with the outer unit normal vector $\nu$, we now calculate
\[
\int_{\Omega_\rho} |\nabla \phi_*|^2 dx = \int_{\Omega_\rho} |\nabla \psi|^2 dx = \int_{\partial\Omega_\rho} \psi 
\frac{\partial\psi}{\partial\nu} d\h^1.
\]
The final integral can be split into 
\[
\int_{\partial\Omega_\rho} \psi 
\frac{\partial\psi}{\partial\nu} d\h^1
= \int_{\partial\Omega_\rho\cap \partial\Omega} \psi \frac{\partial\psi}{\partial\nu} d\h^1 -
\sum_{j=1}^N \int_{\partial B_\rho(a_j)\cap\Omega} \psi \frac{\partial\psi}{\partial\rho} d\h^1,
\]
with $\frac{\partial}{\partial\rho}$ denoting the radial derivative at $\partial B_\rho(a_j)$. 
The first term in the above RHS converges as $\rho\to 0$:
$$\int_{\partial\Omega_\rho\cap \partial\Omega} \psi \frac{\partial\psi}{\partial\nu} d\h^1=-\int_{\partial\Omega_\rho\cap \partial\Omega} \psi \ka d\h^1\to -\int_{\partial\O} \psi \ka\, d\h^1,$$ because $\ka \in L^\infty(\dOm)$ and $\psi\in L^1(\dOm)$ (by the trace theorem for $\psi\in W^{1,1}(\Om)$). 
For the second term, we observe that on $\partial B_\rho(a_j)\cap\Omega$, $\frac{\partial\psi}{\partial\rho}={ -}\frac{d_j}{\rho}+\frac{\partial R}{\partial\rho}+S_j(z)$, where $S_j$  is smooth in $\overline{B_\rho(a_j)\cap\Omega}$ 
for small $\rho>0$, 
so
\[
{-}\int_{\partial B_\rho(a_j)\cap\Omega} \psi \frac{\partial\psi}{\partial\rho} d\h^1 = \int_{\partial B_\rho(a_j)\cap\Omega} 
\left( R(z)-\sum_{k=1}^N d_k \log|z-a_k| \right) \left( \frac{d_j}{\rho}{ -}\frac{\partial R}{\partial\rho}- S_j(z)\right) \, d\h^1.
\]
Thus, since $\h^1(\partial B_\rho(a_j)\cap\Omega)=\pi\rho+o(\rho)$, we have for $\rho \to 0$: $\int_{\partial B_\rho(a_j)\cap\Omega}  R S_j \, d\h^1\to 0$, 
$$\frac{d_j}\rho\int_{\partial B_\rho(a_j)\cap\Omega}  R \, d\h^1\to \pi d_j R(a_j), \quad  \frac{d_j}\rho\int_{\partial B_\rho(a_j)\cap\Omega}  \log |z-a_k| \, d\h^1\to \pi d_j \log|a_j-a_k|, \, k\neq j,$$
\be
\label{aux23} \int_{\partial B_\rho(a_j)\cap\Omega}  R \frac{\partial R}{\partial\rho} \, d\h^1=\int_{B_\rho(a_j)\cap\Omega} |\nabla R|^2\, dx-\int_{B_\rho(a_j)\cap \partial \Omega}  R \frac{\partial R}{\partial\nu} \, d\h^1\to 0,
\ee
because $R\in H^1(\Om)\cap C^0(\bar\Om)$ and $\frac{\partial R}{\partial\nu}\in L^\infty(\dOm)$. Also, as in \eqref{aux23},  integration by parts in $B_\rho(a_j)\cap\Omega$ yields $\int_{\partial B_\rho(a_j)\cap\Omega}  \frac{\partial R}{\partial\rho} \zeta\, d\h^1\to 0$ as $\rho\to 0$ for the functions $\zeta=|\log \rho|$, resp. $\zeta(z)=\log|z-a_k|$ for $k\neq j$.
Summing after $j$, as $d_j^2=1$, the above estimates for $\rho\to 0$ lead to the representation formula \eqref{ren_1}
via Definition \ref{defi:renen}.
\qed
\end{proof}

We now present a somewhat more geometric argument to compute the renormalized energy in Definition \ref{defi:renen}.  This is based on the identification of the canonical harmonic maps with prescribed boundary vortices. 
In the following, we prove Theorem \ref{pro:canm}, i.e., an explicit formula of these canonical harmonic maps, first in the case of a disk domain, second on a general $C^{1,1}$ domain via a conformal Riemann map.\footnote{For a related calculation (with boundary values $\{\pm 1\}$ instead of $\pm$ the unit tangent), see Cabr\'e et al. \cite{CCK19}. }

\begin{proof}{ of Theorem \ref{pro:canm}}
The idea is to use the following two claims:

\medskip

\noindent {\bf Claim 1.} If $\Omega=\R^2_+$, then for any $N\geq 1$ distinct points $\{a_j\}_{1\leq j\leq N}$ on  $\partial \Om=\R\times \{0\}$ with multiplicities\footnote{In this case, there is {\bf no}  constraint on $\sum_{j=1}^N d_j$.}
$d_1, \dots, d_N\in\ZZ\setminus\{0\}$, the canonical harmonic map with prescribed boundary vortices $\{(a_j, d_j)\}$  has the form
$$m_*(z)=\pm \prod_{j=1}^N \left(\frac{z-a_j}{|z-a_j|} \right)^{d_j}, \quad \textrm{for all }\, z\in \R^2.$$

\medskip

\noindent {\bf Claim 2.} Let $\Phi:\overline{\omega}\to \overline{\Omega}$ be a $C^1$ conformal diffeomorphism with inverse $\Psi$ 
between two 
simply connected domains $\omega, \Omega\subset \R^2$. 
If $m_*$ is a canonical harmonic map with prescribed boundary vortices $\{(a_j, d_j)\}$ on $\partial \omega$ (where $\{a_j\}_{1\leq j\leq N}$ are distinct points on $\partial \omega$ and $d_j$ are non-zero integers satisfying $\sum_{j=1}^N d_j=2$), then $M_*$ given in \eqref{can_gen} 
is the canonical harmonic map with prescribed boundary vortices $\{(\Phi(a_j), d_j)\}$ on $\dOm$.

\medskip

\begin{proof}{ of Claim 1} In the case of the domain $\R^2_+$, a canonical harmonic map satisfies $m_*(z)=e^{\ci \phi_*(z)}$ in $z=(x,y)\in \R^2_+$ with $\Delta \phi_*=0$ in $\R^2_+$ and $\partial_{x} \phi_*=-\pi \sum_{j=1}^N d_j \delta_{a_j}$ on $\R\times \{0\}$ where $\phi_*(\R\times \{0\})\subset \pi \ZZ$. As in the proof of Proposition \ref{pro:ren_BBH}, any solution $\phi_*$ of this problem is determined via a conjugate harmonic function that has the form 
$\psi_*(z)=-\sum_{j=1}^N d_j \log |z-a_j|$ for $z\in \R^2_+$. So, if $\mathrm{Arg}$ is a
smooth argument in 
$\CC\setminus\R_+$, then $\phi_*(z)=\sum_{j=1}^N d_j \mathrm{Arg}(z-a_j)$ for $z\in \R^2_+$ is a solution of our problem, unique up to an additive constant in $\pi\ZZ$ .
\end{proof}

\medskip

\begin{proof}{ of Claim 2} As $\Phi$ is a conformal map, we know that
any point $z\in \partial \omega$ is mapped to $\Phi(z)\in\partial\Omega$; also, any unit tangent vector $v\in \Ss^1$ on $\partial \omega$ at $z$ is mapped to $\Phi'(z)v$, so the associated unit tangent on $\partial \Omega$ at $\Phi(z)$ is given by $\frac{v\Phi'(z)}{|\Phi'(z)|}$ and the orientation with respect to the outer normal fields at $\partial \omega$ and $\partial \Omega$ is preserved.  
This means that if $m_*$ is a canonical harmonic map with prescribed boundary vortices $\{(a_j, d_j)\}$ on $\partial \omega$, then the map given by $w=\Phi(z)\in \bar \Omega\mapsto 
\tilde m_*(z)=m_*(z)\frac{\Phi'(z)}{|\Phi'(z)|}$, in other words $M_*(w)=m_*(\Psi(w))\frac{\Phi'(\Psi(w))}{|\Phi'(\Psi(w))|}$ yields a map that is tangential to $\partial\Omega \setminus \{\Phi(a_j)\}$. Note that  $\Phi'(z)=|\Phi'(z)|e^{i \Theta(z)}$ for a smooth harmonic function $\Theta:\omega\to \R$
(recall that $\Phi'$ is holomorphic and never zero on the simply connected domain $\omega$, so it has a holomorphic logarithm).

 Since $m_*=e^{\ci \phi_*}$ in $\omega$, it yields $M_*=e^{\ci \tilde \phi_*}$ with 
\be
\label{conjugateee}
\tilde \phi_*(w)=\phi_*(\Psi(w))+\Theta(\Psi(w)) \quad \textrm{ for every } w\in \Omega
\ee yielding $\tilde \phi_*$ is harmonic in $\Omega$. 
 Since $\Theta$ is smooth, the degrees of $m_*$ near $a_j$ and $M_*$ near $\Phi(a_j)$ are the same.
\end{proof}

\medskip

Coming back to the proof of Theorem \ref{pro:canm}, in the case of the unit disk $\Omega=B_1$ and  $a_1, \dots, a_N\in \partial B_1$, we choose $\omega=\R^2_+$ and for each $b\in \partial B_1\setminus \{a_1, \dots , a_N\}$, we consider the conformal map $\Phi:\bar \R^2_+\to \bar B_1$ given by $\Phi(z)=b\frac{z-i}{z+i}$ for  $z\in \R^2_+\subset \CC$ with the inverse $\Psi(w)=i\frac{w+b}{b-w}$ for $w\in B_1$. Letting $\alpha_j=\Psi(a_j)$ for $j=1, \dots, N$ and $m_*$ be the canonical harmonic map in $\omega$ given at Claim 1 for prescribed boundary vortices $\{(\alpha_j, d_j)\}$ where $\sum_{j=1}^N d_j=2$, then one uses Claim 2 to deduce a canonical harmonic map $M_*$ as in \eqref{can_disk} with prescribed boundary vortices $\{(\alpha_j, d_j)\}$. 
(The constraint $\sum_{j=1}^N d_j=2$ is essential in the formula \eqref{can_disk} even if Claim 1 was independent of this contraint.) By the uniqueness of the canonical map (up to the transformation $M_*\mapsto -M_*$) for prescribed boundary vortices $\{(\alpha_j, d_j)\}$, we deduce the uniqueness in \eqref{can_disk}.\footnote{The uniqueness in \eqref{can_disk} yields the following fundamental identity: for every $b, \tilde b \in \partial B_1\setminus \{a_1, \dots, a_N\}$, then 
$$b \prod_{j=1}^N \left( \frac{|b-a_j|}{b-a_j} \right)^{d_j}=\pm \tilde b \prod_{j=1}^N \left( \frac{|\tilde b-a_j|}{\tilde b-a_j} \right)^{d_j}, 
$$ where $\sum_{j=1}^N d_j=2$. In fact, this identity can be proved as follows: if $b=e^{\ci \beta}$ and $a_j=e^{\ci \alpha_j}$, then $b-a_j=|b-a_j|e^{\ci \frac{\beta+\alpha_j\pm \pi}2}$ which yields the results due to the constraint $\sum_{j=1}^N d_j=2$ with $d_j\in \ZZ\setminus \{0\}$.
}
The general case of an arbitrary simply connected 
domain $\Omega$ follows by Claim 2. 
\qed
\end{proof}

\bigskip

We now prove the formula of the renormalized energy stated in Theorem \ref{pro:ree}:

\begin{proof}{ of Theorem \ref{pro:ree}}
The idea is to determine the solution $\psi$ in \eqref{eq:VisNeum}:

\medskip

\noindent {\bf Fact 1.} If $\Omega=\R^2_+$, then for any $N\geq 1$ distinct points $\{a_j\}_{1\leq j\leq N}$ on  $\partial \Om=\R\times \{0\}$ with multiplicities\footnote{In this case, there is {\bf no} constraint $\sum_{j=1}^N d_j=2$.} 
$d_1, \dots, d_N\in\ZZ\setminus\{0\}$, the solution (up to an additive constant) of  \eqref{eq:VisNeum} has the form 
$$\psi_*(z)=-\sum_{j=1}^N d_j \log |z-a_j| \quad \textrm{for all} \quad z\in \R^2.$$
(This is because $\frac{\partial}{\partial y} \log|z-a_j|=\pi \delta_{a_j}$ for $z=(x,y)\big|_{y=0}$.)

\medskip

\noindent {\bf Fact 2.} Let $\Phi:\overline{\omega}\to \overline{\Omega}$ be a $C^1$ conformal diffeomorphism with inverse $\Psi$ between two $C^{1,1}$ simply connected domains $\omega, \Omega\subset \R^2$. If $\psi_*$ is a solution of \eqref{eq:VisNeum} with prescribed boundary vortices $\{(a_j, d_j)\}$ on $\partial \omega$ (where $\{a_j\}_{1\leq j\leq N}$ are distinct points on $\partial \omega$ and $d_j$ are non-zero integers satisfying $\sum_{j=1}^N d_j=2$), then 
$$\tilde \psi_*(w)=\psi_*(\Psi(w)){+}\log |\Psi'(w)| \quad \textrm{ for every}\quad  w\in \Omega$$ 
is a solution of  \eqref{eq:VisNeum} with prescribed boundary vortices $\{(\Phi(a_j), d_j)\}$ on $\dOm$. (This is a consequence of \eqref{conjugateee} where $\tilde \phi_*(w)$ and $\phi_*(z)$ are harmonic conjugates\footnote{Recall our sign convention that $\phi_*+{\ci} \psi_*$ is holomorphic in $\omega$.} of $\tilde \psi_*(w)$ and $\psi_*(z)$ respectively, while $\Theta(z)$ is a conjugate harmonic of $-\log |\Phi'(z)|=\log |\Psi'(w)|$ for every $z=\Psi(w)\in \omega$.)

\medskip

\noindent {\bf Case 1. $\Omega=B_1$}. As in the proof of Theorem \ref{pro:canm}, we choose $\omega=\R^2_+$ and $b\in \partial B_1\setminus \{a_1, \dots , a_N\}$; then we consider the conformal map $\Phi:\bar \R^2_+\to \bar B_1$ given by $\Phi(z)=b\frac{z-i}{z+i}$ for every $z\in \R^2_+\subset \CC$ with inverse $\Psi(w)=i\frac{w+b}{b-w}$ for every $w\in B_1$. By Facts 1 and 2, a solution $\psi_*$ of \eqref{eq:VisNeum} in $B_1$ is given by
\begin{align}
\nonumber \psi_*(w)&=-\sum_j d_j \log|\Psi(w)-\Psi(a_j)|+\log \frac2{|b-w|^2}\\
\nonumber&=-\sum_j d_j \log\frac{2|w-a_j|}{|b-w|\cdot |b-a_j|}+\log \frac2{|b-w|^2}\\
\label{psi_b1}&=-\sum_j d_j \log |w-a_j|+R(w) \quad \textrm{for all } w\in B_1,
\end{align}
where $R$ is a constant function in $B_1$ (because of the constraint $\sum_j d_j=2$). Using that on $\partial B_1$, $\varkappa=1$ and\footnote{For $a\in \partial B_1$, $z\in B_1\mapsto \log|z-a|$ is harmonic, so the mean-value formula leads to $0=\log|z-a|\big|_{z=0}=\frac1{2\pi r}\int_{\partial B(0,r)} \log|z-a|\, d\h^1=\frac1{2\pi}\int_{\partial B_1} \log|rz-a|\, d\h^1\to\frac1{2\pi}\int_{\partial B_1} \log|z-a|\, d\h^1$ as $r\uparrow 1$ by dominated convergence theorem (due to the fact that $\log|z-a|\in L^1(\partial B_1)$).} $\int_{\partial B_1} \log|z-a| d\h^1=0$
  for all $a\in \partial B_1$,
we conclude to the desired formula via \eqref{ren_1} and Remark \ref{rem4}.
 
\medskip

\noindent {\bf Case 2. General domain $\Omega$}. We use Fact 2 and Case 1 where we can replace $\psi_*$ in \eqref{psi_b1} by $\psi_*-R$ as $R$ is a constant function (due to Remark \ref{rem4}). Therefore, the solution $\tilde \psi_*$ of \eqref{eq:VisNeum} in $\Omega$ is given by
$$\tilde \psi_*(w)=-\sum_j d_j \log|\Psi(w)-\Psi(a_j)|+\log |\Psi'(w)|\quad \textrm{for all } w\in \Om.$$
It follows that
$$
\tilde R(w)=\tilde \psi_*(w)+\sum_j d_j \log|w-a_j|=-\sum_j d_j \log\frac{|\Psi(w)-\Psi(a_j)|}{|w-a_j|}+\log |\Psi'(w)|.
$$
As $d_k^2=1$ for every $k$, it yields
$$\pi d_k \tilde R(a_k)=-\pi \sum_{j\neq k} d_k d_j \log\frac{|\Psi(a_k)-\Psi(a_j)|}{|a_k-a_j|}+\pi (d_k-1)\log |\Psi'(a_k)|$$
 and the desired formula follows by \eqref{ren_1}. \qed
\end{proof}

\begin{rem}
It is possible to also encode the effect of a small applied field or anisotropy in the renormalized energy, as has been done by Kurzke-Melcher-Moser \cite{KurzkeMelcherMoser:2013b}
 for interior vortices. In this case, the optimal phase 
 is no longer harmonic, but satisfies a nonlinear elliptic equation instead. 
\end{rem}

\begin{proof}{ of Corollary \ref{cor-ren-min}}
If $\Phi:\overline{B_1}\to \overline{\Omega}$ is a $C^1$ conformal diffeomorphism with inverse $\Psi$, setting $b_1=\Psi(a_1)\in \partial B_1$, $b_2=\Psi(a_2)\in \partial B_1$ for two distinct points  $a_1, a_2\in \dOm$, Remark \ref{rem_min_ellipse} yields
\begin{multline*}
W_\Omega(\{(a_1,1), (a_2,1)\})\\=
-2\pi \log |b_1-b_2| + \int_{\partial B_1} \varkappa( \Phi(z)) |\Phi'(z)| \bigg( \log |z-b_1|+\log|z-b_2| + \log |\Phi'(z)|\bigg) d\h^1(z).
\end{multline*}
Let $D=(\dOm\times \dOm)\setminus \{(a,a)\, :\, a\in \dOm\}$. 
In order to prove the existence of minimizers of $W_\Omega(\{(\cdot,1), (\cdot,1)\})$ over $D$, we consider a minimizing sequence 
$(a_1^{(n)}, a_2^{(n)})\in D$ for every $n\in \NN$. Set $b_1^{(n)}=\Psi(a_1^{(n)})$ and $b_2^{(n)}=\Psi(a_2^{(n)})$. As $\dOm$ is compact, up to a subsequence, we can assume that $a_1^{(n)}\to a_1^*$ and $a_2^{(n)}\to a_2^*$ as $n\to \infty$; thus, $b_1^{(n)}\to \Psi(a_1)=:b_1^*$ and $b_2^{(n)}\to \Psi(a_2)=:b_2^*$. Note that  
$$(b_1, b_2)\in \partial B_1\times \partial B_1\mapsto  \int_{\partial B_1} \varkappa( \Phi(z)) |\Phi'(z)| \bigg( \log |z-b_1|+\log|z-b_2| + \log |\Phi'(z)|\bigg) d\h^1(z)$$
is a bounded function as $\varkappa\in L^\infty(\dOm)$. As $\bigg(W_\Omega(\{(a_1^{(n)},1), (a_2^{(n)},1)\})\bigg)_n$ is bounded, it implies that $(\log |b_1^{(n)}-b_2^{(n)}|)_n$ is bounded; thus, $b_1^*\neq b_2^*$ and so, $a_1^*\neq a_2^*$ (as $\Psi$ is  injective), i.e., $(a_1^*, a_2^*)\in D$. By the continuity of $W_\Omega(\{(\cdot,1), (\cdot,1)\})$ over $D$, we deduce that  $(a_1^*, a_2^*)$ is a minimizer. If $\Omega=B_1$, then $W_\Omega(\{(a_1,1), (a_2,1)\})=
-2\pi \log |a_1-a_2|$ and any diameter $(a_1^*, a_2^*)$ minimizes the renormalized energy reaching  the minimal value $-2\pi \log 2$. \qed
\end{proof}

\appendix

\section{Existence and uniqueness of the stray field}
\label{appendixA}
We prove existence and uniqueness of the $H^1$ stray field potential $U$ in \eqref{Helmholtz} and we determine the exact formula of $U$.

\begin{pro}\label{pro:append}
Let $\bfom\subset \R^3$ be a bounded open set and $\bfm\in L^2(\bfom, \R^3)$. Then there exists a unique stray field potential $U\in H^1(\R^3)$ of the problem \eqref{Helmholtz}. The exact expression is given by the convolution 
$$U(\bfx)=-\frac{1}{4\pi |\bfx|}\star \bfa \cdot (\bfm \mathds{1}_{\bfom})\quad \textrm{in} \quad \R^3$$ 
of the distribution of compact support $\bfa \cdot (\bfm \mathds{1}_{\bfom})$ and the tempered distribution $V(\bfx)=-\frac{1}{4\pi |\bfx|}\in \mcS'(\R^3)$ that is the fundamental solution $\bfd V=\delta_0$ in $\R^3$. Moreover, if $\bfom$ is Lipschitz and $\bfm\in H^1(\bfom,\R^3)$, then
\be
\label{form_stray}
4\pi U(\bfx)=-\int_{\bfom} \frac{1}{|\bfx-\bfy|} \bfa\cdot  \bfm(\bfy) \, d\bfy+ \int_{\partial \bfom}\frac{1}{|\bfx-\bfy|} 
(\bfm\cdot \pmb{\nu})(\bfy)\, d\h^2(\bfy),\ee
where $\pmb{\nu}$ is the unit outer normal vector at $\partial \bfom$.
\end{pro}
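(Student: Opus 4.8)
The plan is to settle uniqueness first, then construct the solution explicitly as the Newtonian potential of the magnetic charge, check it belongs to $H^1(\R^3)$, and finally recover the integral representation \eqref{form_stray} by integrating by parts under the extra regularity hypotheses.

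For \textbf{uniqueness}, if $U_1,U_2\in H^1(\R^3)$ both satisfy \eqref{Helmholtz}, then $w:=U_1-U_2\in H^1(\R^3)$ satisfies $\int_{\R^3}\bfa w\cdot\bfa\zeta\,d\bfx=0$ for all $\zeta\in C^\infty_c(\R^3)$; by density of $C^\infty_c(\R^3)$ in $H^1(\R^3)$ this also holds for $\zeta=w$, so $\bfa w=0$ a.e., $w$ is constant, and being in $L^2(\R^3)$ it vanishes. For \textbf{existence}, I would observe that $T:=\bfa\cdot(\bfm\mathds{1}_{\bfom})$ is a distribution with compact support, namely $T=\bfa\cdot g$ with $g:=\bfm\mathds{1}_{\bfom}\in L^2(\R^3;\R^3)$, and set $U:=V\star T$ with $V(\bfx)=-\frac1{4\pi|\bfx|}\in\mcS'(\R^3)$, $\bfd V=\delta_0$. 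The convolution of the compactly supported distribution $T$ with the tempered distribution $V$ is well defined and commutes with differentiation, so $\bfd U=(\bfd V)\star T=\delta_0\star T=T$; thus this $U$ solves \eqref{Helmholtz} and is the asserted convolution. (One could also get bare existence by Riesz representation in the \emph{homogeneous} space $\dot H^1(\R^3)$, but the convolution route has the advantage of delivering the explicit formula at the same time.)

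To check $U\in H^1(\R^3)$, I would write $U=\sum_{k}(\partial_{x_k}V)\star g_k$, which is legitimate since $g$ has compact support and $\bfa V\in L^1_{\mathrm{loc}}(\R^3)$ with $|\bfa V(\bfx)|=\frac1{4\pi|\bfx|^2}$; then $\partial_{x_i}U=\sum_k(\partial_{x_i}\partial_{x_k}V)\star g_k$, whose Fourier multipliers $\xi_i\xi_k/|\xi|^2$ are bounded, so by Plancherel $\bfa U\in L^2(\R^3)$ with $\|\bfa U\|_{L^2(\R^3)}\le C\|\bfm\|_{L^2(\bfom)}$. Sobolev's embedding then gives $U\in L^6(\R^3)$, hence $U\in L^2$ on every bounded set. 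For $|\bfx|$ large the integral defining $U$ converges absolutely and
\[
|U(\bfx)|\lesssim\int_{\supp g}|\bfa V(\bfx-\bfy)|\,|g(\bfy)|\,d\bfy\lesssim\frac1{|\bfx|^2}\,\|\bfm\|_{L^1(\bfom)},
\]
which is square-integrable near infinity in $\R^3$. Combining the two regimes gives $U\in L^2(\R^3)$, hence $U\in H^1(\R^3)$, and by uniqueness this is the stray field potential.

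Finally, for the representation \eqref{form_stray} I would assume $\bfom$ Lipschitz and $\bfm\in H^1(\bfom;\R^3)$ and integrate by parts inside the definition of $T$: for $\zeta\in C^\infty_c(\R^3)$,
\[
\langle T,\zeta\rangle=-\int_\bfom\bfm\cdot\bfa\zeta\,d\bfx=\int_\bfom(\bfa\cdot\bfm)\,\zeta\,d\bfx-\int_{\partial\bfom}(\bfm\cdot\bfnu)\,\zeta\,d\h^2,
\]
so $T=(\bfa\cdot\bfm)\mathds{1}_{\bfom}-(\bfm\cdot\bfnu)\,\h^2\llcorner\partial\bfom$ is a compactly supported finite signed measure (here $\bfa\cdot\bfm\in L^2(\bfom)$ and $\bfm\cdot\bfnu\in L^2(\partial\bfom)\subset L^1(\partial\bfom)$). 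Since $V\in L^1_{\mathrm{loc}}(\R^3)$ and the single-layer potential $\bfx\mapsto\int_{\partial\bfom}|\bfx-\bfy|^{-1}\,d\h^2(\bfy)$ is finite for a.e.\ $\bfx$ because $\partial\bfom$ is Lipschitz, the distributional convolution $V\star T$ is represented by the locally integrable function $\bfx\mapsto\int_{\R^3}V(\bfx-\bfy)\,dT(\bfy)$, which is exactly the right-hand side of \eqref{form_stray}. The main obstacle here is the passage from the homogeneous estimate $\bfa U\in L^2$ to genuine $H^1(\R^3)$-membership: because $\R^3$ is unbounded and the Newtonian potential decays only like $|\bfx|^{-2}$, one must combine the interior bound (via Sobolev) with the far-field decay; a secondary point is justifying that the abstract convolution $V\star T$ really coincides with the classical integral once $T$ has been identified with a measure. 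Everything else is routine.
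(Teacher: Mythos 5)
Your argument is correct, but it follows a genuinely different route from the paper. You prove uniqueness directly in $H^1(\R^3)$ by a density argument, construct the solution outright as $U=V\star\bfa\cdot(\bfm\mathds{1}_{\bfom})$, get $\bfa U\in L^2$ from the boundedness of the multipliers $\xi_i\xi_k/|\xi|^2$ (a Riesz-transform/Plancherel bound), and obtain $U\in L^2$ by combining interior integrability with the far-field decay $|U(\bfx)|\lesssim |\bfx|^{-2}$ of the Newtonian-type potential. The paper instead works in the Beppo--Levi space $\dot H^1(\R^3)$: existence and uniqueness there come from Lax--Milgram, $U\in L^2$ is read off on the Fourier side by splitting $\{|\bfxi|\ge 1\}$ (where $\FF(\bfm\mathds{1}_{\bfom})\in L^2$) from $\{|\bfxi|\le 1\}$ (where $\FF(\bfm\mathds{1}_{\bfom})\in L^\infty$ and $|\bfxi|^{-2}$ is locally integrable in $\R^3$), and the convolution formula is then identified with $U$ a posteriori by checking it lies in $\dot H^1$ and invoking uniqueness in that larger class. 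Your route buys the explicit formula and the $H^1$-uniqueness with more elementary tools; the paper's route buys uniqueness in the larger homogeneous class and a shorter $L^2$ bound. One small point in your write-up: from $\bfa U\in L^2(\R^3)$ alone, Sobolev's inequality only gives $U\in L^6$ up to an additive constant, so you should say explicitly that the far-field decay pins this constant to zero (or bypass Sobolev entirely, since local square-integrability of $U=\sum_k(\partial_{x_k}V)\star g_k$ already follows from Young's inequality on bounded sets); with that clarification, which your closing remark essentially anticipates, the proof is complete. Your identification of $\bfa\cdot(\bfm\mathds{1}_{\bfom})$ with the measure $(\bfa\cdot\bfm)\,\h^3\llcorner\bfom-(\bfm\cdot\bfnu)\,\h^2\llcorner\partial\bfom$ and the passage to the integral representation \eqref{form_stray} agree with the paper, and you in fact justify the coincidence of the distributional convolution with the classical integral in more detail than the paper does.
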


\begin{proof}{}
  We apply Lax-Milgram's theorem for the problem \eqref{Helmholtz} in
  the Beppo-Levi space (in other words, the homogeneous $\dot{H}^1$-space): 
\begin{align*}
\mathcal{BL}&=\{U:\R^3\to \R\,:\, \bfa U\in L^2(\R^3), \frac{U}{1+|\bfx|}\in L^2(\R^3)\}\\
&=\{U\in \mcS'(\R^3)\,:\, \FF(U)\in L^1_{loc}(\R^3), |\bfxi|\FF(U)\in L^2(\R^3)\}{=: \dot H^1(\RR^3)},
\end{align*}
where $\FF(U)(\bfxi){=\int_{\RR^3} e^{-i\bfx\cdot \bfxi} U(\bfx)\, d\bfx}$ is the Fourier transform  of $U$ and $\bfxi$ is the Fourier variable in $\R^3$.
The space $\mathcal{BL}$ endowed with the homogeneous $\dot{H}^1$-norm, i.e., $U\mapsto \|\bfa U\|_{L^2(\R^3)}$, is a Hilbert space and the set $C^\infty_c(\R^3)$  of smooth compactly supported functions is a dense set. Since $\bfm\in  L^2$, Lax-Milgram's theorem yields the existence and uniqueness of the solution $U\in \mathcal{BL}$ of \eqref{Helmholtz}, in particular, $\bfa U\in L^2(\R^3)$, the Fourier transform $\FF(U)\in L^1_{loc}(\R^3)$ and we have
 $$
\bfd U = \bfa\cdot (\bfm \mathds{1}_{\bfom}) \quad \textrm{ in the sense of distributions in } \RR^3.
$$ This equation implies the following equality in the Fourier space:
$$\FF(U)(\bfxi)={-}\frac{i\bfxi\cdot \FF(\bfm \mathds{1}_{\bfom})(\bfxi)}{|\bfxi|^2}, \quad \bfxi\in \R^3\setminus \{0\}.$$
We check that $U\in L^2(\R^3)$. 
Indeed, 
\begin{align*}
 {(2\pi)^3} \|U\|^2_{L^{2}(\R^3)} \ &= \  \|\FF(U)\|^2_{L^{2}(\R^3)} \\
  &\leq \ \int_{|\bfxi|\geq 1} \frac{|\FF(\bfm \mathds{1}_{\bfom})(\bfxi)|^2}{|\bfxi|^2}\, d\bfxi +\int_{|\bfxi|\leq 1} \frac{|\FF(\bfm \mathds{1}_{\bfom})(\bfxi)|^2}{|\bfxi|^2}\, d\bfxi,  \\
  &\leq \ \nltL{\FF(\bfm \mathds{1}_{\bfom})}{\R^3}^2 + \|\FF(\bfm \mathds{1}_{\bfom})\|^2_{L^\infty(\R^3)} \int_{|\bfxi|\leq 1}  \frac 1{|\bfxi|^2} \ d\bfxi \\
  &\leq \ C \ \left (\| \bfm\|^2_{L^2(\bfom)} + \|\bfm\|^2_{L^1(\bfom)} \right) 
  \leq \ C \| \bfm\|^2_{L^2(\bfom)}.
\end{align*}
Let us check that the solution $U$ coincides with $\tilde{U}=V\star  \bfa \cdot (\bfm \mathds{1}_{\bfom})$ in $\R^3$. Indeed, we have that $V\in L^1(\R^3)+L^4(\R^3)$ (so, $V$ is a tempered distribution in 
$\mcS'(\R^3)$ {with $\FF(V)=-1/|\bfxi|^2$ for $\bfxi\neq 0$}) and $\bfa \cdot (\bfm \mathds{1}_{\bfom})$ is a distribution of compact support (because $\bfom$ is bounded); thus, $\tilde{U}$ is a tempered distribution in $\mcS'(\R^3)$ and we check that $|\bfxi| \FF(\tilde{U})=|\bfxi|\FF(V)\cdot \FF(\bfa \cdot (\bfm \mathds{1}_{\bfom}))\in L^2(\R^3)$, i.e.,
$$\int_{\R^3}|\bfxi|^2 |\FF(\tilde{U})|^2\, d\bfxi=\int_{\R^3}\frac{1}{|\bfxi|^2} |\FF(\bfa \cdot (\bfm \mathds{1}_{\bfom}))|^2\, d\bfxi
\leq \int_{\R^3} |\FF(\bfm \mathds{1}_{\bfom})|^2\, d\bfxi={(2\pi)^3}\| \bfm\|^2_{L^2(\bfom)}.$$ We conclude that $\tilde{U}$ belongs to 
$\dot H^1(\RR^3)$, satisfies $\bfd \tilde U = \bfa\cdot (\bfm \mathds{1}_{\bfom})$ in $\R^3$ and by the uniqueness of the solution $U$ of \eqref{Helmholtz}, it follows that $U=\tilde{U}$ in $\R^3$. In the case of a Lipschitz domain $\bfom$ and $\bfm\in H^1(\bfom,\R^3)$, one decomposes 
$$\bfa \cdot (\bfm \mathds{1}_{\bfom})=\bfa \cdot \bfm \h^3\llcorner {\bfom}- \bfm\cdot \pmb{\nu} \h^2\llcorner {\partial \bfom}$$
in the sense of measures in $\R^3$ and therefore, \eqref{form_stray} follows via $U=V\star  \bfa \cdot (\bfm \mathds{1}_{\bfom})$. \qed
\end{proof}

\bibliography{ignatkurzke}
\end{document}